\numberwithin{equation}{section}
\numberwithin{theorem}{section}
\numberwithin{figure}{section}
\numberwithin{algorithm}{section}
\newcommand{%
	\tikzsetnextfilename{Figures/}%
	\input{Figures/.tikz}%
}[1]{%
	\tikzsetnextfilename{Figures/#1}%
	\input{Figures/#1.tikz}%
}
\newcommand{%
	\tikzsetnextfilename{FinalPics/}%
	\input{FinalPics/.tikz}%
}[1]{%
	\tikzsetnextfilename{FinalPics/#1}%
	\input{FinalPics/#1.tikz}%
}
\newcommand{\TheTitle}{ Balanced Truncation Model Order Reduction for Quadratic-Bilinear Control Systems}
\newcommand{\TheShortTitle}{ Balanced Truncation MOR for Quadratic-Bilinear  Systems}
\newcommand{\TheAuthors}{Peter Benner, and Pawan Goyal}
\headers{\TheShortTitle}{\TheAuthors}
\title{{\TheTitle}%\thanks{Submitted to the editors \today.
%\funding{This work was funded b}
%}
}
\author{
  Peter Benner\thanks{Max Planck Institute for Dynamics of Complex Technical Systems, Sandtorstra\ss e 1, 39106 Magdeburg, Germany
    (\email{benner@mpi-magdeburg.mpg.de}).}
  \and
  Pawan Goyal\thanks{Corresponding author. Max Planck Institute for Dynamics of Complex Technical Systems, Sandtorstra\ss e 1, 39106 Magdeburg, Germany (\email{goyalp@mpi-magdeburg.mpg.de}).}
}
\newcommand{\bbm}{\begin{bmatrix}}
	\newcommand{\ebm}{\end{bmatrix}}
\newcommand{\bpm}{\begin{pmatrix}}
	\newcommand{\epm}{\end{pmatrix}}
\newenvironment{customlegend}[1][]{%
	\begingroup
	% inits/clears the lists (which might be populated from previous
	% axes):
	\csname pgfplots@init@cleared@structures\endcsname
	\pgfplotsset{#1}%
}{%
% draws the legend:
\csname pgfplots@createlegend\endcsname
\endgroup
}%
\def\addlegendimage{\csname pgfplots@addlegendimage\endcsname}
\newlength\figureheight
\newlength\figurewidth
\newlength\fheight
\newlength\fwidth
\newtheorem{example}[theorem]{Example}
\newtheorem{remark}[theorem]{Remark}
\renewcommand\hat[1]{\widehat#1}
\renewcommand\tilde[1]{\widetilde#1}
\begin{document}

\maketitle

% REQUIRED
\begin{abstract}
  We discuss balanced truncation  model order reduction for large-scale quadratic-bilinear (QB) systems. Balanced truncation for linear systems mainly involves the computation of the Gramians of the system, namely  reachability and observability Gramians. These Gramians are extended to a general nonlinear setting in Scherpen (1993), where it is shown that Gramians for nonlinear systems are the solutions of state-dependent nonlinear Hamilton-Jacobi equations. Therefore, they are not only  difficult to compute for large-scale systems but also hard to utilize in the model reduction framework. In this paper, we propose algebraic Gramians for  QB systems based on the underlying Volterra series representation of QB systems and their Hilbert adjoint  systems. We then show their relations with a certain type of generalized quadratic Lyapunov equation. Furthermore, we present how these algebraic Gramians and energy functionals relate to each other. Moreover, we  characterize the reachability and observability of QB systems based on the proposed algebraic Gramians.  This allows us to find those states that are hard to control and hard to observe via an appropriate transformation based on the Gramians. Truncating such states yields reduced-order systems.  Additionally, we present a truncated version of the Gramians for QB systems and discuss their advantages in the model reduction framework.  We also investigate the Lyapunov stability of the reduced-order systems. We finally illustrate the efficiency of  the proposed balancing-type model reduction for QB systems by means of various semi-discretized nonlinear partial differential equations and show its competitiveness with the existing moment-matching methods for QB systems.
\end{abstract}

% REQUIRED
\begin{keywords}
Model order reduction, balanced truncation, Hilbert adjoint operator, tensor calculus, Lyapunov stability,  energy functionals.
\end{keywords}

% REQUIRED
\begin{AMS}
15A69, 34C20, 41A05, 49M05, 93A15, 93C10, 93C15.
\end{AMS}

\section{Introduction}   %\sout{Numerical analysis of large-scale dynamical systems becomes very challenging as their governing ordinary differential equations (ODEs), or partial differential equations (PDEs), or both, are discretized very fine over the interested spatial domain. }
Numerical simulations are considered to be a primary tool in studying dynamical systems, e.g., in prediction and control studies. High-fidelity modeling is an  essential step to gain deep insight into the behavior of complex  dynamical systems.  Even though computational resources have been developing extensively over the last few decades,  fast numerical simulations of such high-fidelity systems, whose number of state variables can easily be of order $\cO(10^5){-}\cO(10^6)$,  are still a huge computational burden. This makes the usage of these large-scale systems very difficult and inefficient, for instance,  in optimization and control design. One approach to mitigate this problem is \emph{model order reduction} (MOR). MOR seeks to substitute these large-scale dynamical systems by  low-dimensional (reduced-order) systems such that the input-output behaviors of  both original and reduced-order systems are close enough, and the reduced-order systems preserve some important properties, for instance, stability and passivity of the original system.

MOR techniques and strategies for linear systems are well-established and  are widely applied in various application areas, see, e.g.,~\cite{morAnt05,morBenMS05,morSchVR08}. In many applications, where the dynamics are governed by nonlinear PDEs, such as Navier-Stokes equations, Burgers' equations, a linear system can also be obtained via  linearization of the system around a suitable expansion point, e.g., the steady-state solution.   Notwithstanding the linearized system captures the dynamics very well locally. However, as it moves away from the expansion point, the linearized system might not be able to capture the system dynamics accurately. Therefore, there is a general need to take nonlinear terms into consideration, thus resulting in  a more accurate system. Consider a nonlinear system of the form
\begin{equation}\label{eq:NonlinearSys}
\begin{aligned}
 \dot{x}(t) &=  f(x(t)) + g(x(t),u(t)), \\
 y(t) &= h(x(t),u(t)), \qquad x(0) = x_0,
\end{aligned}
 \end{equation}
where $f:\Rn \rightarrow \Rn$, $g:\Rn\times\Rm \rightarrow \Rn$ and $h:\Rn\times \Rm \rightarrow \Rp$ are nonlinear state-input evolution functions, and $x(t)\in\Rn, u(t)\in\Rm$ and $y(t)\in \Rp$ denote the state, input and output vectors of the system at time $t$, respectively.  Also, we consider a fixed initial condition $x_0$ of the system. However,   without loss of generality,  we assume  the zero initial condition, i.e., $x(0) =0$. In  case $x(0) \neq 0$, one can transform the system by defining new appropriate state variables as $\tx(t) = x(t) - x_0$  to ensure  a zero initial condition of the transformed system, e.g., see \cite{morBauBF14}. The main goal of MOR is to construct a low-dimensional system, having  a similar form as the system~\eqref{eq:NonlinearSys}
\begin{equation}\label{eq:NonlinearSys_Red}
\begin{aligned}
 \dot{\hx}(t) &=  \hf(\hx(t)) + \hg(\hx(t),u(t)), \\
 \hy(t) &= \hh(\hx(t),u(t)), \qquad \hx(0) = 0,
\end{aligned}
 \end{equation}
in which $\hf:\R^{\hn} \rightarrow \R^{\hn}$, $\hg:\R^{\hn}\times\Rm \rightarrow \R^{\hn}$ and $\hh:\R^{\hn}\times \Rm \rightarrow \Rp$ with $\hn \ll n$ that fulfills our desired requirements.

MOR techniques  for general nonlinear systems, namely trajectory-based MOR techniques, have been widely applied in the literature to determine  reduced-order systems for nonlinear systems; see, e.g.,~\cite{morAstWWetal08,morChaS10,morHinV05}. The proper orthogonal decomposition (POD) method is a very powerful trajectory-based MOR technique, which depends on a Galerkin projection $\cP = VV^T$, where $V$ is a projection matrix such that $x(t) \approx V\hx(t)$. The nonlinear functions $\hf(\hx)$ can be given as $\hf(\hx(t)) = V^Tf(V\hx(t))$, and  similar expressions  can also be derived for $\hg(\hx(t),u(t))$ and $\hh(\hx(t),u(t))$. This method preserves the structure of the original system in the reduced-order system, but still, the reduced-order system requires the computation of the nonlinear functions on the full grid. This may obstruct the success  of MOR;  however, there are many new advanced methodologies such as the empirical interpolation method (EIM), the discrete empirical  interpolation method (DEIM), the best point interpolation method (BPIM),  to perform the computation of the nonlinear functions cheaply and quite accurately. For details, we refer to~\cite{morBarMNetal04,morChaS10,drmac2016new,morGreMNetal07}.

Another popular trajectory-based MOR technique is based on trajectory piecewise linearization (TPWL)~\cite{morRew03}, where nonlinear functions are replaced by a weighted combination of linear systems. These linear systems  can then  be reduced by applying  well-established MOR techniques for linear systems such as balanced truncation or the interpolation-based iterative method (IRKA); see, e.g.,~\cite{morAnt05,morGugAB08}. In recent years, reduced basis methods have been successfully applied to nonlinear systems to obtain  reduced-order systems~\cite{morBarMNetal04,morGreMNetal07}. In spite of all these, the trajectory-based MOR techniques have the drawback  of being input dependent. This makes the obtained reduced-order systems inadequate to  control applications, where the input function may vary significantly from any used training input. 

In this article, we consider a certain class of  nonlinear control systems, namely quadratic-bilinear (QB) control systems. The advantage of considering this special class of  nonlinear systems is that systems, containing smooth mono-variate nonlinearities such as exponentials, polynomials, trigonometric functions, can also be rewritten in the QB form by introducing some new variables in the state vector~\cite{morGu09}.  Note that this transformation is exact, i.e., it requires no approximation and does not introduce any error, but this transformation may not be unique. 

Related to MOR for QB systems, the idea of one-sided  moment-matching has been extended from linear or bilinear systems to QB systems; see, e.g.,~\cite{bai2002krylov,feng2004direct,morGu09,li2005compact,morPhi03}, where a reduced system is determined by capturing the input-output behavior of the original system,   given by generalized transfer functions. More recently, this has been extended to two-sided moment-matching in~\cite{morBenB15}, ensuring more  moments to be matched, for a given order of the reduced system. Despite these methods have evolved as an effective MOR technique for nonlinear systems in recent times, shortcomings of these methods are: how to choose an appropriate order of the reduced system and how to select good interpolation points. Moreover, the applicability of the two-sided moment-matching method~\cite{morBenB15} is limited to single-input single-output QB systems, and also the stability of the obtained reduced-order system is a major issue in this method.

In this article, our focus rather lies on balancing-type MOR techniques for QB systems. This technique mainly depends on controllability and observability energy functionals, or in other words, Gramians of the system.  This method is presented for linear systems, e.g., in~\cite{morAnt05,morMoo81}, and later on, a theory of balancing for general nonlinear systems is developed in a sequence of papers~\cite{morfuji10,morgray2001,morSch93,morsche1996h,morsche94nor}.  In the general nonlinear case, the balancing requires  the solutions of the state-dependent nonlinear Hamilton-Jacobi equation which are, firstly, very expensive to solve for large-scale dynamical systems; secondly, it is not  straightforward  to use them in the MOR context. Along with these, it may happen that the reduced-order systems, obtained from nonlinear balancing, do not preserve the structure of the nonlinearities in the system. However, for some  weakly nonlinear systems,  the so-called bilinear systems, reachability and observability Gramians have been  studied in~\cite{moral1994,morBenD11,bennertruncated,morcondon2005,enefungray98}, which are solutions to generalized algebraic Lyapunov equations. Moreover, these Gramians, when used to define appropriate  quadratic forms, approximate  energy functionals of bilinear systems (in the neighborhood of the origin), see~\cite{morBenD11, bennertruncated}

Moving in the direction of balancing-type  MOR for QB systems, our first goal is to come up with reachability and observability Gramians for these systems, which are state-independent  matrices and suitable for the MOR purpose. In addition to this, we need to show  how the Gramians  relate to the energy functionals of the QB systems and provide interpretations of reachability and observability of the  system with respect to these Gramians. To this end, in the subsequent section, we review  background material  associated with energy functionals and a duality of the nonlinear systems, which  serves as the basis for the rest of the paper. In \Cref{sec:gramians}, we propose the  reachability Gramian  and its truncated version for QB systems based on the underlying Volterra series of the system. Additionally, we determine the observability Gramian and its truncated version based on the dual system associate to the QB system. Furthermore, we establish  relations between the solutions of a certain type of quadratic Lyapunov equations and these Gramians.  In \Cref{sec:energyfunctionals},  we develop the connection  between the proposed  Gramians  and the energy functionals of the QB systems and  reveal their relations to reachability and observability of the system. Consequently, we utilize these Gramians for balancing of QB systems, allowing us to determine those states that are hard to control as well as hard to observe. Truncation of such states leads to reduced systems.  In \Cref{sec:computational}, we discuss the related computational issues and advantages of the truncated version of Gramians in the MOR framework. We further discuss the stability of these reduced systems.  In \Cref{sec:numerics}, we test the efficiency of the proposed balanced truncation MOR technique for various semi-discretized nonlinear PDEs and compare it with the existing moment-matching techniques for the QB systems.

\section{Preliminaries} \label{sec:background}We begin with recapitulation of energy functionals for nonlinear systems.
\subsection{Energy functionals for nonlinear systems}
In this subsection, we give a brief overview of energy functionals, namely controllability and observability energy functionals for nonlinear systems. For this, let us consider the following smooth, for example, $C^\infty$, nonlinear asymptotically stable input-affine nonlinear system of the form
\begin{equation}\label{eq:Gen_NonlinearSys}
 \begin{aligned}
  \dot{x}(t) &=  f(x) + g(x)u(t),\\
  y(t) &= h(x),\qquad x(0) = 0,
 \end{aligned}
\end{equation}
where $x(t) \in \Rn$, $u(t) \in \Rm$ and $y(t)\in\Rp$ are the state, input and output vectors of the system, respectively, and also $f(x) : \Rn \rightarrow \Rn$, $g(x) :\Rn \rightarrow \R^{n\times m}$ and $h(x) : \Rn\rightarrow \Rp$ are smooth nonlinear functions. Without loss of generality, we assume  that $0$ is an equilibrium of  the system~\eqref{eq:Gen_NonlinearSys}.  The controllability and observability energy functionals for the general nonlinear systems  first have been studied in the literature in~\cite{morSch93}. In the following, we state the definitions of controllability and observability energy functionals for the system~\eqref{eq:Gen_NonlinearSys}.
\begin{definition}\cite{morSch93}
 The controllability energy functional is defined as the minimum amount of energy required to steer the system from $x(-\infty) = 0$ to $x(0) = x_0$:
 \begin{equation*}
  L_c(x_0) = \underset{\begin{subarray}{c}
 u\in L^m_2(-\infty,0] \\[5pt]
 x(-\infty)=0,~x(0) = x_0
  \end{subarray}  }{\min} \dfrac{1}{2}\int_{-\infty}^0 \|u(t)\|^2dt.
 \end{equation*}
\end{definition}
\begin{definition}\cite{morSch93}\label{def:obser1}
The observability energy functional can be defined as the  energy generated by the nonzero initial condition $x(0) = x_0$ with zero control input:
% \begin{equation*}
%  L_o(x_0) = \underset{\begin{subarray}{c}
%  u\in L_2(0,\infty), \|u\|_{L_2} \leq 1 \\[5pt]
%  x(0) = x_0,x(\infty)=0
%   \end{subarray}  }{\textsf{max}} \dfrac{1}{2}\int_0^\infty \|y(t)\|^2dt.
%  \end{equation*}
\begin{equation*}
 L_o(x_0) = \dfrac{1}{2}\int_0^\infty \|y(t)\|^2dt.
 \end{equation*}
\end{definition}
We assume that the system~\eqref{eq:Gen_NonlinearSys} is controllable and observable. This implies that the system~\eqref{eq:Gen_NonlinearSys} can always be steered from $x(0) =0$ to $x_0$ by using appropriate inputs.
%, and the initial condition of the system can be determined by observing the output of the system. Note that in \Cref{def:obser1}, the control input is  completely set to zero.  As discussed in~\cite{morgray1996}, for a nonlinear system such condition can be very strong. As a result, therein, it is shown how this condition can be  relaxed in the context of general input balancing, and a new definition for the observability functionals is provided as follows:
To define the observability energy functional (\Cref{def:obser1}), it is assumed that the nonlinear system~\eqref{eq:Gen_NonlinearSys} is  a zero-state observable. It means that if $u(t) = 0$ and $y(t) =0$ for $t\geq 0$, then $x(t) = 0$ $\forall t\geq 0$. However, as discussed in~\cite{morgray1996}, for a nonlinear system such a condition can be very strong. As a result, therein, it is shown how this condition can be  relaxed in the context of general input balancing, and a new definition for the observability functionals was proposed as follows:

\begin{definition}\cite{morgray1996}\label{def:obser2}
	The observability energy functional can be defined as the  energy generated by the nonzero initial condition $x(0) = x_0$  and by applying an $L^m_2$-bounded input:
	 \begin{equation*}
	  L_o(x_0) = \underset{\begin{subarray}{c}
	  u\in L^m_2[0,\infty), \|u\|_{L_2} \leq \alpha \\[5pt]
	  x(0) = x_0,x(\infty)=0
	   \end{subarray}  }{\max} \dfrac{1}{2}\int_0^\infty \|y(t)\|^2dt.
	  \end{equation*}
\end{definition}

In an abstract way, the main idea of introducing \Cref{def:obser2} to find the state component that contributes less from a state-to-output point of view for all possible $L_2$-bounded inputs. 
The connections between these energy functionals and the solutions of the partial differential equations  are established in~\cite{morgray1996,morSch93}, which are outlined in the following theorem.
%Moreover, it is shown in~\cite{morgray1996,morSch93} that the energy functionals satisfy partial differential equations as stated in the following theorem.
\begin{theorem}\cite{morgray1996,morSch93}\label{thm:energy_function}
Consider the nonlinear system~\eqref{eq:Gen_NonlinearSys}, having $x = 0$ as an asymptotically stable equilibrium of the system in a neighborhood $W_o$ of $0$. Then, for all $x\in W_o$, the observability energy functional $L_o(x)$ can be determined  by the following partial differential equation:
\begin{equation}\label{eq:Obser_Diff}
 \dfrac{\partial L_o}{\partial x}  f(x) + \dfrac{1}{2}h^T(x)h(x) - \dfrac{1}{2}\mu^{-1}\dfrac{\partial L_o}{\partial x}g(x)g(x)^T\dfrac{\partial^T L_o}{\partial x} = 0,\quad L_o(0) = -\dfrac{1}{2}\mu,
\end{equation}
assuming that there exists a smooth solution $\bar{L}_o$ on $W$, and $0$ is an asymptotically stable equilibrium of $\bar{f}:= (f-\mu^{-1}gg^T\tfrac{\partial^T \bar{L}_o} {\partial x})$ on $W$  with a negative real number $\mu:= -\|g^T(\phi)\tfrac{\partial^T \bar{L}_o} {\partial x} (\phi) \|_{L_2}$, and $\dot{\phi} = \bar{f}(\phi)$ with $\phi(0) = x$.
%\begin{equation}\label{eq:Obser_Diff}
%\dfrac{\partial L_o}{\partial x}  f(x) + \dfrac{1}{2}h^T(x)h(x) =0,\qquad L_o(0) = 0.
%\end{equation}
%assuming \eqref{eq:Obser_Diff} has a smooth solution on $W_o$.
 Moreover, for all $x\in W_c$, the controllability energy functional $L_c(x)$ is a unique smooth solution of the following Hamilton-Jacobi equation:
\begin{equation}\label{eq:Cont_Diff}
 \dfrac{\partial L_c}{\partial x} f(x) +  f(x)\dfrac{\partial L_c}{\partial x} + \dfrac{\partial L_c}{\partial x}g(x)g(x)^T\dfrac{\partial^T L_c}{\partial x} = 0,\quad L_c(0) = 0
\end{equation}
under the assumption that~\eqref{eq:Cont_Diff} has a smooth solution $\bar{L}_c$ on $W_c$, and $0$ is an asymptotically stable equilibrium of $-\left(f(x) + g(x)g(x)^T\tfrac{\partial\bar{L}_c(x)}{\partial x}^T\right)$ on $W_c$.
\end{theorem}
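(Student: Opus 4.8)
The plan is to exploit that both $L_c$ and $L_o$ are values of infinite-horizon optimal control problems and to attack each one by dynamic programming, in three steps: (i) establish Bellman's principle of optimality for the functional; (ii) pass to its infinitesimal form to obtain a Hamilton--Jacobi equation; (iii) perform the pointwise optimization over the instantaneous input $u$ to eliminate it, and then prove a verification (converse) statement showing that, under the stated smoothness and closed-loop stability hypotheses, a smooth solution of the resulting PDE actually coincides with the energy functional. I would treat $L_c$ and $L_o$ separately with this common pattern.

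For $L_c$, the underlying problem is steering \eqref{eq:Gen_NonlinearSys} from $x(-\infty)=0$ to $x(0)=x_0$ with least input energy. Splitting an optimal trajectory at time $-\delta$ and using time-invariance, Bellman's principle reads $L_c(x_0)=\min_u\bigl\{L_c\bigl(x_0-(f(x_0)+g(x_0)u)\delta\bigr)+\tfrac{1}{2}\|u\|^2\delta\bigr\}+o(\delta)$; subtracting $L_c(x_0)$, dividing by $\delta$ and letting $\delta\to0$ gives $0=\min_u\bigl\{-\tfrac{\partial L_c}{\partial x}(f(x)+g(x)u)+\tfrac{1}{2}\|u\|^2\bigr\}$. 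The inner minimum is attained at $u^\star=g(x)^T\tfrac{\partial^T L_c}{\partial x}$, and substituting it collapses the bracket to $\tfrac{\partial L_c}{\partial x}f(x)+\tfrac{1}{2}\tfrac{\partial L_c}{\partial x}g(x)g(x)^T\tfrac{\partial^T L_c}{\partial x}=0$, which is \eqref{eq:Cont_Diff} (reading its first two terms as the scalar $\tfrac{\partial L_c}{\partial x}f$ and its transpose). The boundary value $L_c(0)=0$ is immediate, since no energy is required to remain at the equilibrium. Asymptotic stability of $-\bigl(f+gg^T\tfrac{\partial^T\bar{L}_c}{\partial x}\bigr)$ at $0$ is exactly what makes the candidate trajectory generated by the optimal feedback $u^\star$, integrated backward from $x_0$, tend to $0$ as $t\to-\infty$, so that the steering constraint is feasible and the improper integral finite; integrating $\tfrac{d}{dt}\bar{L}_c(x(t))$ along arbitrary admissible trajectories (for the lower bound) and along the one generated by $u^\star$ (for equality) then shows $\bar{L}_c=L_c$ on $W_c$.

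For $L_o$ in the sense of \Cref{def:obser2}, the object is a constrained \emph{maximization} of the output energy over $L^m_2$-bounded inputs that also drive $x$ to $0$. I would dualize the constraint $\|u\|_{L_2}\le\alpha$ with a negative multiplier $\mu$, i.e.\ consider the unconstrained, strictly concave-in-$u$ problem of maximizing $\tfrac{1}{2}\int_0^\infty\bigl(\|y(t)\|^2+\mu\|u(t)\|^2\bigr)\,dt$ subject to $x(0)=x_0$, $x(\infty)=0$ --- an $H_\infty$-type value function. The same dynamic-programming step yields the Hamilton--Jacobi--Isaacs equation $0=\tfrac{\partial L_o}{\partial x}f+\tfrac{1}{2}h^Th+\max_u\bigl\{\tfrac{\partial L_o}{\partial x}gu+\tfrac{1}{2}\mu\|u\|^2\bigr\}$; the maximizer is $u^\star=-\mu^{-1}g(x)^T\tfrac{\partial^T L_o}{\partial x}$, and substituting it gives precisely \eqref{eq:Obser_Diff}, with the associated closed loop $\dot x=\bar{f}(x)=\bigl(f-\mu^{-1}gg^T\tfrac{\partial^T\bar{L}_o}{\partial x}\bigr)(x)$. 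Asymptotic stability of $\bar{f}$ at $0$ guarantees that the optimal trajectory $\phi$ ($\dot\phi=\bar{f}(\phi)$, $\phi(0)=x$) satisfies $\phi(t)\to0$, so the terminal constraint $x(\infty)=0$ is attainable and all integrals converge; the normalization $L_o(0)=-\tfrac{1}{2}\mu$ and the self-consistency relation $\mu=-\|g^T(\phi)\tfrac{\partial^T\bar{L}_o}{\partial x}(\phi)\|_{L_2}$ encode that $u^\star$ actually meets the prescribed bound, after which the verification argument again identifies $\bar{L}_o$ with $L_o$.

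The hard part is not the formal derivation of \eqref{eq:Cont_Diff} and \eqref{eq:Obser_Diff} but the verification (converse) direction: showing that a \emph{smooth} solution of the PDE genuinely equals the minimum, resp.\ the constrained maximum. This needs finiteness of the value functions, enough regularity to differentiate $\bar{L}_c(x(t))$ and $\bar{L}_o(x(t))$ along trajectories, and control of the boundary terms as $t\to\pm\infty$; this is precisely where the asymptotic-stability hypotheses on the closed-loop vector fields are used, and why the statement is local, valid only on the neighborhoods $W_c$, $W_o$ on which such smooth solutions exist. A secondary subtlety, specific to \Cref{def:obser2}, is making the correspondence between the bound $\alpha$ and the multiplier $\mu$ (which in general varies with the evaluation point, so that \eqref{eq:Obser_Diff} is really coupled with the relation defining $\mu$) precise --- checking that the penalized problem has the same value as the original constrained one for that $\mu$ --- which is where most of the delicate analysis of \cite{morgray1996} lies.
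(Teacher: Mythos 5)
The paper itself does not prove \Cref{thm:energy_function}: it is imported as background, with the proofs residing in \cite{morSch93} (for $L_c$) and \cite{morgray1996} (for the $L_2$-constrained $L_o$), so there is no in-paper argument to compare against. Your dynamic-programming reconstruction --- Bellman step, pointwise optimization in $u$ yielding $u^\star=g^T\tfrac{\partial^T L_c}{\partial x}$ for the steering problem and $u^\star=-\mu^{-1}g^T\tfrac{\partial^T L_o}{\partial x}$ for the $\mu$-penalized output-energy problem, followed by a verification argument hinging on the closed-loop stability hypotheses --- is precisely the route taken in those references, and your formal computations (including reading the first two terms of \eqref{eq:Cont_Diff} as a scalar plus its transpose, and locating the delicate multiplier-versus-constraint correspondence in \cite{morgray1996}) are correct.
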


Note that in \Cref{def:obser2}, the zero-state observable condition is relaxed by considering $L_2$-bounded inputs.  However, an alternative way to relax the zero-state observable condition by considering not only $L_2$-bounded inputs but also  $L_\infty$ bounded inputs. We thus propose a new definition of the observability energy functional as follows:

\begin{definition}\label{def:obser3}
	The observability energy functional can be defined as the  energy generated by the nonzero initial condition $x(0) = x_0$ and by applying an $L_2$-bounded and $L_\infty$-bounded input:
	\begin{equation*}
	L_o(x_0) = \underset{\begin{subarray}{c}
		u\in \cB_{(\alpha,\beta)} \\[5pt]
		x(0) = x_0,x(\infty)=0
		\end{subarray}  }{\max} \dfrac{1}{2}\int_0^\infty \|y(t)\|^2dt,
	\end{equation*}
\end{definition}
where $\cB_{(\alpha,\beta)} \overset{\mathrm{def}}{=} \{u \in L_2^m[0,\infty), \|u\|_{L_2}\leq \alpha, \|u\|_{L_\infty}\leq \beta \}$. In this paper, we use the above definition to characterize the observability energy functional for QB systems. 
\subsection{Hilbert adjoint operator for nonlinear systems}
 The importance of the adjoint operator (dual system) can be seen, particularly, in the computation of the observability energy functional or Gramian.
 For  general nonlinear systems, a duality between controllability and observability energy functionals is shown in~\cite{Adjfujimoto2002} with the help of  state-space realizations for nonlinear adjoint operators. In what follows, we briefly outline the state-space realizations for nonlinear adjoint operators of nonlinear systems. For this, we consider a nonlinear system of the form
 \begin{equation}\label{eq:Gen_Nonlinear}
\Sigma := \begin{cases}
\begin{aligned}
 \dot{x}(t) &= \cA(x,u,t) x(t) + \cB(x,u,t)u(t),\\
 y(t) &= \cC(x,u,t)x(t) + \cD(x,u,t)u(t),\qquad x(0) = 0
 \end{aligned}
\end{cases}
\end{equation}
in which $x(t) \in \Rn$, $u(t) \in \Rm$ and $y(t) \in \Rp$ are the state, input and output  vectors of the system, respectively, and $\cA(x,u,t)$, $\cB(x,u,t) $, $\cC(x,u,t)$ and $\cD(x,u,t) $ are appropriate size matrices.  Also, we assume that the origin is a stable equilibrium of the system. The Hilbert adjoint operators for the general nonlinear systems have been investigated in~\cite{Adjfujimoto2002}. Therein,  a connection between the state-space realization of the adjoint operators and  port-control Hamiltonian systems is also discussed, leading to the state-space characterization of the nonlinear Hilbert adjoint operators of $\Sigma:L_2^m(\Omega) \rightarrow L_2^p(\Omega)$. In the following lemma, we summarize the state-space realization of the Hilbert adjoint operator of the nonlinear system.
\begin{lemma}\cite{Adjfujimoto2002}\label{lemma:adjointsys}
 Consider the system~\eqref{eq:Gen_Nonlinear} with the initial condition $x(0) =0$, and assume that the input-output mapping $u\rightarrow y$ is denoted by the operator $\Sigma : L_2^m(\Omega) \rightarrow L_2^p(\Omega)$. Then, the state-space realization of the nonlinear Hilbert adjoint operator $\Sigma^*:L_2^{m+p}(\Omega) \rightarrow L_2^m(\Omega)$ is given by
 \begin{equation}\label{eq:Gen_Nonlinear_Adj}
 \Sigma^*(u_d,u) := \begin{cases}
\begin{aligned}
   \dot{x}(t) &= \cA(x,u,t)x(t) + \cB(x,u,t)u(t), & \quad  x(0) &= 0,\\
   \dot{x_d}(t) &= -\cA(x,u,t)x_d(t) - \cC^T(x,u,t)u_d(t), & \quad  x_d(\infty) &= 0,\\
   y_d(t) &= \cB^T(x,u,t)x_d(t) + \cD^T(x,u,t)u_d(t),
  \end{aligned}
\end{cases}
\end{equation}
where $x_d \in \Rn$, $u_d \in\Rp$ and $y_d\in\Rm$ can be interpreted as the dual state, dual input and  dual output vectors of the system, respectively.
\end{lemma}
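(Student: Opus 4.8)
\emph{Proof idea.} The plan is to realize $\Sigma^{*}$ as the Hilbert-space adjoint of the Fr\'echet derivative of the input--output operator $\Sigma:u\mapsto y$ at the base input $u$, this being the sense in which a nonlinear adjoint is defined in~\cite{Adjfujimoto2002}. First I would fix $u\in L_2^m(\Omega)$ and use the stability of the origin together with the smoothness and (local) boundedness of $\cA,\cB,\cC,\cD$ to guarantee that $u$ generates a well-defined, decaying state trajectory $x(\cdot)$ and that $\Sigma$ is Fr\'echet differentiable near $u$ as a map $L_2^m(\Omega)\to L_2^p(\Omega)$. Its derivative $D\Sigma(u)$ is the input--output map of the \emph{variational system} along $x(\cdot)$: a linear time-varying system $\dot v=\mathbf{A}(t)v+\mathbf{B}(t)\delta u$, $\delta y=\mathbf{C}(t)v+\mathbf{D}(t)\delta u$ with $v(0)=0$, whose coefficients $\mathbf{A}(t),\mathbf{B}(t),\mathbf{C}(t),\mathbf{D}(t)$ are assembled pointwise from $(x(t),u(t))$ and the data of~\eqref{eq:Gen_Nonlinear}. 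Since producing these coefficients requires the nominal trajectory, any realization of $\Sigma^{*}$ must first re-integrate the original state equation forward in time, which is exactly the first line of~\eqref{eq:Gen_Nonlinear_Adj}.

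The core of the argument is then the classical duality of linear time-varying systems, obtained by a Lagrange-identity computation. For a given $u_d\in L_2^p(\Omega)$, introduce the adjoint state $x_d$ solving the backward equation $\dot x_d=-\mathbf{A}(t)^{T}x_d-\mathbf{C}(t)^{T}u_d$ with the terminal condition $x_d(\infty)=0$. Differentiating $\langle x_d(t),v(t)\rangle$ and substituting both dynamics, the $\mathbf{A}$-terms cancel and one is left with $\tfrac{d}{dt}\langle x_d,v\rangle=-\langle u_d,\mathbf{C}(t)v\rangle+\langle \mathbf{B}(t)^{T}x_d,\delta u\rangle$. Integrating over $[0,\infty)$ and using $v(0)=0$ together with $x_d(\infty)=0$ — this is where the asymptotic-stability hypothesis is needed, both to make the boundary term at $t\to\infty$ vanish and to keep every pairing integrable — gives
\begin{equation*}
\langle u_d,\,D\Sigma(u)\delta u\rangle_{L_2^p(\Omega)}
=\langle u_d,\,\mathbf{C}v+\mathbf{D}\delta u\rangle
=\langle \mathbf{B}(t)^{T}x_d+\mathbf{D}(t)^{T}u_d,\,\delta u\rangle_{L_2^m(\Omega)} .
\end{equation*}
As this holds for every $\delta u\in L_2^m(\Omega)$, the map $\delta u\mapsto \mathbf{B}^{T}x_d+\mathbf{D}^{T}u_d$ is $\big(D\Sigma(u)\big)^{*}u_d=\Sigma^{*}(u_d,u)$; substituting the coefficients of the variational system reproduces the costate equation and the output equation of~\eqref{eq:Gen_Nonlinear_Adj}. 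Alternatively, the same realization follows from the port-controlled Hamiltonian (variational) extension used in~\cite{Adjfujimoto2002}, with $x_d$ the adjoint variable and the terminal condition emerging from the natural boundary terms of the underlying variational principle.

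I expect the principal obstacle to be the rigorous functional-analytic bookkeeping on the infinite horizon $\Omega$: establishing Fr\'echet differentiability of $\Sigma$ between the $L_2$ spaces with the right remainder estimates, ensuring that both $x(\cdot)$ and $x_d(\cdot)$ decay so that $\langle x_d,v\rangle\big|_{t\to\infty}$ vanishes and all inner products above are finite, and — in the genuinely quadratic case — being precise about how the coefficients $\mathbf{A},\mathbf{B},\mathbf{C},\mathbf{D}$ of the variational system relate to the state-dependent-coefficient data $\cA,\cB,\cC,\cD$ (this is the point where the chosen representation~\eqref{eq:Gen_Nonlinear}, rather than the vector field alone, enters the statement). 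Once these are in place, the remaining content is the standard integration-by-parts duality recorded above.
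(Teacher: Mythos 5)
The paper gives no proof of \Cref{lemma:adjointsys}; it is quoted from \cite{Adjfujimoto2002}. So the only question is whether your sketch would actually establish the stated realization, and I do not think it would: the obstruction is not the functional-analytic bookkeeping you flag at the end, but your very first step. You identify $\Sigma^*$ with the Hilbert adjoint of the Fr\'echet derivative $D\Sigma(u)$ and therefore run the Lagrange-identity computation on the \emph{variational} state $v$ of the linearized system. The nonlinear Hilbert adjoint meant here is a different object: it is the operator $\Sigma^*(u_d,u)$ characterized by $\langle u_d,\Sigma(u)\rangle_{L_2^p(\Omega)}=\langle \Sigma^*(u_d,u),u\rangle_{L_2^m(\Omega)}$, i.e.\ an adjoint of $\Sigma$ itself relative to the chosen factorization $\dot x=\cA(x,u,t)x+\cB(x,u,t)u$, not of its linearization. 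The correct computation pairs the costate with the \emph{nonlinear} trajectory itself: with $\dot x_d=-\cA^Tx_d-\cC^Tu_d$ one gets $\tfrac{d}{dt}\bigl(x_d^Tx\bigr)=-u_d^T\cC x+x_d^T\cB u$ (the $\cA$-terms cancel), and integrating over $[0,\infty)$ with $x(0)=0$ and $x_d(\infty)=0$ yields $\langle u_d,\cC x+\cD u\rangle=\langle \cB^Tx_d+\cD^Tu_d,u\rangle$, which is exactly \eqref{eq:Gen_Nonlinear_Adj}.

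Your route instead produces $(D\Sigma(u))^*$, whose costate equation carries the transposed Jacobian of the full right-hand side, $\bigl(\partial_x[\cA(x,u,t)x+\cB(x,u,t)u]\bigr)^T$, rather than $\cA(x,u,t)^T$; these coincide only when $\cA$ and $\cB$ are state-independent. For the QB system this is not cosmetic: $\partial_x\bigl(H(x\otimes x)\bigr)=H(x\otimes I)+H(I\otimes x)$, which even for symmetric $H$ equals $2H(x\otimes I)$ --- twice the factor that appears in the lemma and in everything built on it in \Cref{sec:gramians}. Moreover, the paper's later observation that the two factorizations $A+H(x\otimes I)+\sum_kN_ku_k$ and $A+H(I\otimes x)+\sum_kN_ku_k$ yield two \emph{different} adjoint realizations (one with $\cH^{(2)}$, one with $\cH^{(3)}$) is only consistent with the factorization-based definition; the adjoint of the Fr\'echet derivative would be uniquely determined by the vector field alone. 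The fix is simple but essential: drop the variational system entirely and run your integration-by-parts argument directly on $x_d^Tx$.
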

We will see in the subsequent section the importance of the dual system in determining the observability energy functional or observability Gramian for a  QB system because a duality of the energy functionality holds.

\section{Gramians for QB Systems} \label{sec:gramians}This section is devoted to determine  algebraic Gramians for QB systems, which are also related to the energy functionals of the quadratic-bilinear systems as welI. Let us consider  QB systems of the form
\begin{subequations}\label{eq:Quad_bilin_Sys}
\begin{align}
 \dot{x}(t) &= Ax(t) + H~x(t)\otimes x(t) + \sum_{k = 1}^mN_kx(t)u_k(t) +  Bu(t),\label{eq:QB_differential}\\
 y(t) &= Cx(t),\quad x(0) = 0,\label{eq:QB_output}
 \end{align}
\end{subequations}
where $A,N_k\in \Rnn, H\in\R^{n\times n^2}, B \in \Rnm$ and $C\in \Rpn$. Furthermore, $x(t) \in \Rn$, $u(t)\in \Rm$ and $y(t) \in \Rp$ denote the state, input and output vectors of the system, respectively. Since the system~\eqref{eq:Quad_bilin_Sys} has a quadratic nonlinearity in the state vector $x(t)$ and also includes  bilinear terms $N_kx(t)u_k(t)$, which are products of the state vector and inputs, the system is called a quadratic-bilinear (QB) system. 
%\sout{For simplicity of notation, we consider  single-input single-output system (SISO) QB systems to derive algebraic Gramians; however, analogously these can be extended to multi-input multi-output systems. We denote $N_1$ by $N$ in the SISO case.} 
We begin by deriving  the reachability Gramian of  the QB system  and its connection with a certain type of  quadratic Lyapunov equation.

%%%%%%%%%%%%%%%%%%%%%%%%%%%%%%%%%%%%%%%%%%%%%%%%%%%%%%%%%%%%%%%%%%%%%%%%%%%%%%%%%%%%%%%%%%%%%%%%%%%%%%%%%%
%%%%%%%%%%%%%%%%%%%%%%%%%%   reachability GRAMIAN    %%%%%%%%%%%%%%%%%%%%%%%%%%%%%%%%%%%%%%%%%%%%%%%%%%
%%%%%%%%%%%%%%%%%%%%%%%%%%%%%%%%%%%%%%%%%%%%%%%%%%%%%%%%%%%%%%%%%%%%%%%%%%%%%%%%%%%%%%%%%%%%%%%%%%%%%%%%%%

\subsection{Reachability Gramian for QB systems}
In order to derive the reachability Gramian, we first formulate the Volterra series for the QB system~\eqref{eq:Quad_bilin_Sys}. Before we proceed further, for ease we define the following short-hand notation:
\begin{equation*}
u^{(k)}_{\sigma_1,\ldots ,\sigma_l}(t) := u_k(t-\sigma_1 \cdots -\sigma_l)\quad\mbox{and}\quad x_{\sigma_1,\ldots ,\sigma_l}(t) := x(t-\sigma_1 \cdots -\sigma_l).
\end{equation*}
We integrate  both sides of the differential equation~\eqref{eq:QB_differential} in the state variables with respect to time  to obtain
\begin{multline}\label{eq:first_int}
x(t) = \int\nolimits_0^te^{A\sigma_1}Bu_{\sigma_1}(t)d\sigma_1 +  \sum_{k=1}^m \int\nolimits_0^te^{A\sigma_1} N_kx_{\sigma_1}(t)u^{(k)}_{\sigma_1}(t)d\sigma_1 \\ +\int\nolimits_0^te^{A\sigma_1}H\left(x_{\sigma_1}(t)\otimes x_{\sigma_1}(t)\right)d\sigma_1.
\end{multline}
Based on the above equation, we obtain an expression for $x_{\sigma_1}(t)$ as follows:
\begin{multline*}
x_{\sigma_1}(t) = \int\limits_0^{t-\sigma_1}e^{A\sigma_2}Bu_{\sigma_1,\sigma_2}(t)d\sigma_2 +\sum_{k=1}^m \int\limits_0^{t-\sigma_1}e^{A\sigma_2}N_kx_{\sigma_1,\sigma_2}(t)u^{(k)}_{\sigma_1,\sigma_2}(t)d\sigma_2 \\
 + \int\limits_0^{t-\sigma_1}e^{A\sigma_2}H\left(x_{\sigma_1,\sigma_2}(t)\otimes x_{\sigma_1,\sigma_2}(t)\right)d\sigma_2
\end{multline*}
and substitute it in~\eqref{eq:first_int} to have
\begin{equation*}
\begin{aligned}
x(t) &= \int\limits_{0}^t {e^{A\sigma_1}B}u_{\sigma_1}(t)d\sigma_1 +\sum_{k=1}^m\int\limits_{0}^t\int\limits_{0}^{t-\sigma_1} {e^{A\sigma_1}N_ke^{A\sigma_2} B} u^{(k)}_{\sigma_1}(t)u^{}_{\sigma_1,\sigma_2}(t)d\sigma_1d\sigma_2 \\
 &\quad+ \int\limits_{0}^t\int\limits_{0}^{t-\sigma_1}\int\limits_{0}^{t-\sigma_1} {e^{A\sigma_1} H (e^{A\sigma_2}B\otimes e^{A\sigma_3}B)}\left(u_{\sigma_1,\sigma_2}(t) \otimes u_{\sigma_1,\sigma_3}(t)\right)d\sigma_1d\sigma_2d\sigma_3 
% &\qquad+ \int\limits_0^t\int\limits_0^{t-\sigma_1}e^{A\sigma_1}N e^{A\sigma_2}\Big[Nx_{\sigma_1,\sigma_2}u_{\sigma_1,\sigma_2} + H\left(x_{\sigma_1,\sigma_2}\otimes x_{\sigma_1,\sigma_2} \right)\Big] d\sigma_2d\sigma_1 +
  +\cdots.
 \end{aligned}
\end{equation*}
Repeating this process by repeatedly substituting  for the state yields the Volterra series for the QB system~\cite{sastry2013nonlinear}.  Having carefully  analyzed the \emph{kernels} of the Volterra series for the system, we define the reachability  mapping $\bar{P}$  as follows:
\begin{equation}\label{eq:Cont_Mapping}
 \bar{P} = [\bar P_1,~ \bar P_2,~\bar P_3,\dots],
\end{equation}
where the $\bar P_i$'s are:
\begin{equation}\label{eq:defined_barP}
\begin{aligned}
 \bar P_1(t_1)&= e^{At_1}B,\\
  \bar P_2(t_1,t_2)&= e^{At_2}\bbm N_1,\ldots,N_m\ebm \left(I_m\otimes \bar P_1(t_1)\right),\\
%    \bar P_3(t_1,t_2,t_3)&= e^{At_3}[H\bar P_1(t_1)\otimes \bar P_1(t_2), \bbm N_1,\ldots,N_m\ebm
 % \left(  I_m \otimes \bar P_2(t_1,t_2)\right)],\\
    \vdots\qquad&\qquad\qquad\vdots\\
    \bar P_i (t_1,\ldots, t_i)&= e^{At_i}\Big[H \big[\bar P_1(t_1)\otimes \bar P_{i-2}(t_{2},\ldots,t_{i-1}),\bar P_2(t_1,t_2)\otimes \bar P_{i-3}(t_{3},\ldots,t_{i-1}),\\
    &\qquad \ldots, \bar P_{i-2}(t_1,\ldots,t_{i-2})\otimes \bar P_1(t_{i-1})\big],\\
    &  \qquad \bbm N_1,\ldots,N_m\ebm\left(I_m\otimes \bar P_{i-1}(t_1,\ldots,t_{i-1})\right)\Big],\forall~i\geq 3.\\
    % \bar P_i (t_1,\ldots, t_i)&= e^{At_i}\Big[H\sum_{\substack{f\geq 1,g\geq 1,\\f+g = i-1}} P_f(t_1,\ldots t_f)\otimes P_g(t_{f+1},\ldots,t_{i-1}) , \\ 
    % &\qquad \qquad\bbm N_1,\ldots,N_m\ebm\left(I_m\otimes \bar P_{i-1}(t_1,\ldots,t_{i-1})\right)\Big],\forall~i\geq 3.
\end{aligned}
\end{equation}
Using the mapping $\bar P$~\eqref{eq:Cont_Mapping}, we  define the reachability Gramian $P$ as
\begin{equation}\label{eq:Cont_Gram}
 P = \sum_{i=1}^{\infty}P_i\qquad \text{with} \qquad P_i = \int\limits_0^{\infty}\cdots \int\limits_0^{\infty} \bar P_i(t_1,\ldots,t_i)\bar P_i^T(t_1,\ldots,t_i)dt_1\cdots dt_i.
\end{equation}

In what follows, we show the equivalence between the above proposed reachability Gramian and the solution of a certain type of quadratic Lyapunov equation. %For simplicity, we drop the integral limits and represent $i$ integrals by only one integral in the rest of the paper.
\begin{theorem}\label{thm:con_gram}
 Consider the QB system~\eqref{eq:Quad_bilin_Sys} with a stable matrix $A$. If the reachability Gramian $P$ of the system defined as in~\eqref{eq:Cont_Gram} exists, then the Gramian $P$ satisfies the generalized quadratic  Lyapunov equation, given by
 \begin{equation}\label{eq:cont_lyap}
 AP + PA^T + H(P\otimes P) H^T +  \sum_{k=1}^mN_kPN_k^T + BB^T = 0.
 \end{equation}
\end{theorem}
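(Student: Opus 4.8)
The plan is to substitute the series definition~\eqref{eq:Cont_Gram} of $P$ into the left-hand side of~\eqref{eq:cont_lyap} and verify that everything telescopes to zero. The essential analytic ingredient is the identity
\begin{equation*}
A M + M A^T = -\int_0^\infty \frac{d}{dt}\left( e^{At} M e^{A^Tt}\right) dt = -\bigl[ e^{At} M e^{A^Tt}\bigr]_0^\infty = M,
\end{equation*}
valid for any fixed matrix $M$ when $A$ is stable; more precisely, if $Q = \int_0^\infty e^{At} M e^{A^Tt}\,dt$ then $AQ + QA^T = -M$. So I would first rewrite each $P_i$ in a form that exposes an outermost factor $e^{At_i}(\cdots)(\cdots)^Te^{A^Tt_i}$ integrated over $t_i$, using the recursive structure of the kernels $\bar P_i$ in~\eqref{eq:defined_barP}. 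Writing $\bar P_i(t_1,\dots,t_i) = e^{At_i} K_i(t_1,\dots,t_{i-1})$ where $K_i$ collects the bracketed term built from $H$, the $N_k$'s, and lower-order kernels, we get $P_i = \int_0^\infty e^{At_i}\bigl(\int\cdots\int K_i K_i^T\,dt_1\cdots dt_{i-1}\bigr) e^{A^Tt_i}\,dt_i$, hence $AP_i + P_i A^T = -\int\cdots\int K_i(t_1,\dots,t_{i-1}) K_i^T(t_1,\dots,t_{i-1})\,dt_1\cdots dt_{i-1}$.

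Next I would identify what that right-hand side $\int\cdots\int K_i K_i^T$ equals in terms of the other terms in~\eqref{eq:cont_lyap}. From the definition, $K_1 = B$, giving the $BB^T$ term (case $i=1$). For $i=2$, $K_2 = [N_1,\dots,N_m](I_m\otimes \bar P_1(t_1))$, and integrating $K_2 K_2^T$ over $t_1$ produces $\sum_{k=1}^m N_k P_1 N_k^T$. For $i\ge 3$, $K_i$ is the concatenation $[\,H[\,\bar P_1\otimes\bar P_{i-2},\dots,\bar P_{i-2}\otimes\bar P_1\,],\ [N_1,\dots,N_m](I_m\otimes\bar P_{i-1})\,]$, so $K_i K_i^T$ splits (the cross-terms vanishing because of the block-column structure — I should double-check the concatenation is meant to give $K_iK_i^T = \text{(quadratic in }H) + \text{(quadratic in }N_k)$) into an $H$-part and an $N$-part. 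Integrating the $N$-part over all $t_j$ yields $\sum_k N_k P_{i-1} N_k^T$. Integrating the $H$-part yields $H\bigl(\sum_{j=1}^{i-2} P_j \otimes P_{i-1-j}\bigr)H^T$, using the mixed-product property $(\bar P_j\otimes\bar P_{i-1-j})(\bar P_j\otimes\bar P_{i-1-j})^T = (\bar P_j\bar P_j^T)\otimes(\bar P_{i-1-j}\bar P_{i-1-j}^T)$ and separating the disjoint sets of integration variables. Summing $AP_i + P_iA^T = -\int\cdots\int K_iK_i^T$ over $i\ge 1$ then gives
\begin{equation*}
AP + PA^T = -BB^T - \sum_{k=1}^m N_k\Bigl(\sum_{i\ge 1}P_i\Bigr)N_k^T - H\Bigl(\sum_{i\ge 3}\sum_{j=1}^{i-2}P_j\otimes P_{i-1-j}\Bigr)H^T,
\end{equation*}
and reindexing the last double sum as $\sum_{j,\ell\ge 1}P_j\otimes P_\ell = \bigl(\sum_j P_j\bigr)\otimes\bigl(\sum_\ell P_\ell\bigr) = P\otimes P$ finishes the identification with~\eqref{eq:cont_lyap}.

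I expect the main obstacle to be the combinatorial bookkeeping in the $i\ge 3$ case: confirming that the bracket in~\eqref{eq:defined_barP} is exactly arranged so that $K_iK_i^T$ has no cross terms between the $H$-block and the $N$-block (and none between distinct $H$-sub-blocks either), and that the reindexing of convolution-type sums $\sum_{j+\ell = i-1}$ genuinely reproduces $P\otimes P$ with the right multiplicities — getting an off-by-one in the kernel indices would be easy. A secondary, more analytic concern is rigor of the interchange of the infinite sum with the integrals and of $\frac{d}{dt}$ with the improper integral; I would handle this under the standing hypothesis that $P = \sum_i P_i$ exists (is finite), invoking stability of $A$ for the boundary term $e^{At}Me^{A^Tt}\to 0$ and dominated convergence to justify term-by-term manipulation, rather than dwelling on it.
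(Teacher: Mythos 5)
Your proposal is correct and follows essentially the same route as the paper's own proof: the paper likewise shows $AP_1+P_1A^T+BB^T=0$, $AP_2+P_2A^T+\sum_k N_kP_1N_k^T=0$, and $AP_i+P_iA^T+H(P_1\otimes P_{i-2}+\cdots+P_{i-2}\otimes P_1)H^T+\sum_k N_kP_{i-1}N_k^T=0$ for $i\ge 3$ via the integral representation of Lyapunov solutions, then sums over $i$ and reindexes the convolution sum into $P\otimes P$. Your two flagged concerns both resolve as you expect: the bracket in~\eqref{eq:defined_barP} is horizontal concatenation, so $K_iK_i^T$ is a sum of blocks with no cross terms, and the mixed-product/reindexing step is exactly how the paper recovers $H(P\otimes P)H^T$.
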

\begin{proof}
We begin by considering the first term in the summation~\eqref{eq:Cont_Gram}. This is,
\begin{equation*}
P_1 = \int_0^\infty\bar P_1 \bar P_1^T dt_1 =  \int_0^\infty e^{At_1}BB^T e^{A^Tt_1}dt_1.
\end{equation*}
As shown, e.g., in \cite{morAnt05}, $P_1$ satisfies the following Lyapunov equation, provided $A$ is stable:
\begin{equation}\label{eq:1c}
AP_1 + P_1A^T + BB^T = 0.
\end{equation}
Next, we consider  the second term in the summation~\eqref{eq:Cont_Gram}:
\begin{align*}
P_2 &=  \int_0^\infty  \int_0^\infty \bar P_2 \bar P_2^T dt_1dt_2 \\
&=   \int\limits_0^\infty \int\limits_0^\infty e^{At_2}\bbm N_1,\ldots,N_m\ebm \left(I_m\otimes \left(e^{At_1}BB^Te^{A^Tt_1}\right)\right) \bbm N_1,\ldots N_m\ebm^T e^{A^Tt_2}dt_1dt_2\\
&=  \sum_{k=1}^m \int_0^\infty   e^{At_2}N_k \Big( \int_0^\infty e^{At_1}BB^Te^{A^Tt_1}dt_1\Big) N_k^T e^{A^Tt_2}dt_1dt_2\\
%&=   \int_0^\infty e^{At_2}N\Big( \int_0^\infty e^{At_1}BB^Te^{A^Tt_1}dt_1\Big) N^T e^{A^Tt_2}dt_2
&= \sum_{k=1}^m  \int_0^\infty e^{At_2}N_kP_1 N_k^T e^{A^Tt_2}dt_2.
\end{align*}
Again using the integral representation of the solution to Lyapunov equations \cite{morAnt05}, we see that $P_2$ is the solution of the following Lyapunov equation:
\begin{equation}\label{eq:2c}
AP_2 + P_2A^T + \sum_{k=1}^mN_kP_1N_k^T = 0.
\end{equation}
Finally, we consider the $i$th term, for $i\geq 3$, which is
\begin{align*}
P_i &= \int_0^\infty\cdots \int_0^\infty \bar P_i \bar P_i^T dt_1\cdots dt_i\\
&=  \int\limits_0^\infty e^{At_i}\left[H \left[\int\limits_0^\infty\cF\left(\bar P_1(t_1)\right) dt_1\otimes\int\limits_0^\infty\cdots \int\limits_0^\infty \cF\left(\bar P_{i-2}(t_{2},\ldots,t_{i-1})\right) dt_2\cdots dt_{i-1}  \right.\right.\\
&\quad\left.\left.+\cdots + \int\limits_0^\infty\cdots \int\limits_0^\infty \cF\left(\bar P_{i-2}(t_{1},\ldots,t_{i-2}) \right) dt_1\cdots dt_{i-2}\otimes  \int\limits_0^\infty  \cF\left(\bar P_1(t_{i-1}) \right)dt_{i-1}\right]\right.H^T \\
&\quad \left.+  \sum_{k=1}^m N_k \left(\int_0^\infty\cdots \int_0^\infty \cF\left(\bar P_{i-1}(t_1,\ldots,t_{i-1})\right) \right)N_k^T\right]e^{A^Tt_i}dt_i,
\end{align*}
where  we use the shorthand  $\cF(\cA) := \cA\cA^T$. Thus, we have
\begin{align*}
P_i &=  \int_0^\infty e^{At_i}\Big[H(P_1\otimes P_{i-2} + \cdots + P_{i-2}\otimes P_1)H^T + \sum_{k=1}^mN_kP_{i-1}N_k^T\Big]e^{A^Tt_i}dt_i.
\end{align*}
Similar to $P_1$ and $P_2$, we can show that $P_i$ satisfies the following Lyapunov equation, given in terms of the preceding $P_k$, for $k = {1,\ldots,i-1}$:
\begin{equation}\label{eq:3c}
AP_i+P_iA^T + H(P_1\otimes P_{i-2} + \cdots + P_{i-2}\otimes P_1)H^T + \sum_{k=1}^mN_kP_{i-1}N_k^T = 0.
\end{equation}
To the end, adding \eqref{eq:1c}, \eqref{eq:2c} and \eqref{eq:3c} yields
\begin{equation*}
A\left.\sum_{i=1}^\infty P_i\right. + \left.\sum_{i=1}^\infty P_i\right.A^T + H\left(\sum_{i=1}^\infty P_i \otimes \sum_{i=1}^\infty P_i \right)H^T + \sum_{k=1}^m N_k\left(\sum_{i=1}^\infty P_i\right) N_k^T +BB^T = 0.
\end{equation*}
This implies that $P = \sum_{i=1}^\infty P_i$ solves the generalized quadratic  Lyapunov equation given by~\eqref{eq:cont_lyap}.
\end{proof}

\subsection{Dual system and observability Gramian for QB system}
We first derive the dual system for the QB system; the dual system plays an important role in determining the observability Gramian for the QB system~\eqref{eq:Quad_bilin_Sys}, and we aim at determining the observability Gramian in a similar fashion as done for the reachability Gramian in the preceding subsection. From linear and bilinear systems, we know that the observability Gramian of the dual system is the same as the reachability Gramian; here, we also consider the same analogy. If we compare the system~\eqref{eq:Quad_bilin_Sys} with the general nonlinear system as shown in~\eqref{eq:Gen_Nonlinear},  it turns out that for the system~\eqref{eq:Quad_bilin_Sys}
\begin{align*}
 \cA(x,u,t) &= A + H(x\otimes I) + \sum_{k=1}^mN_ku_k,\quad  \cB(x,u,t) = B~~\text{and}~~ \cC(x,u,t) = C.
\end{align*}
Using \Cref{lemma:adjointsys}, we can write down the state-space realization of the adjoint operator of the QB system as follows:
\begin{subequations}\label{eq:Adjoint_QBDAE}
 \begin{align}
  \dot{x}(t) &= Ax(t) + H(x(t) \otimes x(t))  + \sum_{k=1}^mN_kx(t)u_k(t) + Bu(t), \quad x(0) = 0,\\
  \dot{z}(t) &= -A^Tz(t) - ( x(t)^T\otimes I)H^T z(t) - \sum_{k=1}^mN_k^T z(t)u_k(t) - C^Tu_d(t), \\ &\hspace{9cm}z(\infty) = 0,\nonumber\\
  y_d(t) &= B^Tz(t),
 \end{align}
\end{subequations}
where $z(t)\in \Rn, u_d(t) \in \R$ and $y_d \in \R$ can be interpreted as the dual state, dual input and dual output vectors of the system, respectively. Next, we attempt to utilize the existing knowledge for the tensor multiplications and matricization  to simplify the term $(x(t)^T\otimes I)H^T z(t)$ in the system~\eqref{eq:Adjoint_QBDAE} and to write it in  the form of $x(t)\otimes z(t)$.

For this, we review some of the basic properties of tensor theory.  Following~\cite{koldatensor09}, the \emph{fiber} of a 3-dimensional tensor $\cH$ can be defined by fixing each index except one, e.g., $\cH(:,j,k)$,$\cH(j,:,k)$ and $\cH(j,k,:)$.  From the computational  point of view, it is advantageous to consider the matrices associated with the tensor, which can be obtained via unfolding a tensor into a matrix.    The process of unfolding a tensor into a matrix is called \emph{matricization}, and  the mode-$\mu$ matricization of the tensor $\cH$ is denoted by $\cH^{(\mu)}$. For an $l$-dimensional tensor, there are $l$ different possible ways to unfold the tensor into a matrix. We refer to \cite{morBenB15,koldatensor09} for more detailed insights into matricization. Similar to matrix multiplications, one can carry out tensor multiplication using matricization of the tensor~\cite{koldatensor09}. For instance, the mode-$\mu$ product of $\cH$ and a matrix $X\in\R^{n\times s}$ gives a tensor $\cF \in\R^{s\times n\times n}$, satisfying
\begin{equation*}
 \cF = \cH\times_\mu X\quad  \Leftrightarrow   \quad \cF^{(\mu)} = X\cH^{(\mu)}.
\end{equation*}
Analogously, if we define a tensor-matrices product as:
\begin{equation*}
 \cF = \cH \times_1 X\times_2 Y\times_3 Z,
\end{equation*}
where $\cF\in \R^{q_1\times q_2\times q_3}$, $X\in \R^{n\times q_1}$ and $Y\in \R^{n\times q_2}$ and $Z \in \R^{n\times q_3}$, then the following relations are fulfilled:
\begin{subequations}\label{eq:tensor_matricization}
\begin{align}
\cF^{(1)} &= X^T\cH^{(1)}(Y\otimes Z ), \\
\cF^{(2)} &= Z^T\cH^{(2)}(Y\otimes X ) ,\\
\cF^{(3)} &= Y^T\cH^{(3)}(Z \otimes X ) .
\end{align}
\end{subequations}

Coming back to the QB system, the matrix $H\in \R^{n\times n^2}$ in the system denotes a Hessian, which can be seen as an unfolding of a 3-dimensional tensor $\cH\in \R^{n\times n\times n}$.   Here, we choose the tensor $\cH\in \R^{n\times n\times n}$ such that its mode-1 matricization is the same as the Hessian $H$, i.e., $H = \cH^{(1)}$. Next, let us consider a tensor $\cT\in \R^{1\times n\times 1}$, whose mode-1 matricization $\cT^{(1)}$ is given by
\begin{equation*}
 \cT^{(1)}  = z(t)^T H (x{(t)\otimes I}) = z(t)^T \cH^{(1)} (x{(t)\otimes I}).
\end{equation*}
We then observe that the mode-1 matricization of the tensor $\cT$ is a transpose of the mode-2 matricization, i.e., $\cT^{(1)} = \left(\cT^{(2)}\right)^T$, leading to
\begin{equation*}
 \cT^{(1)} = \left(\cT^{(2)}\right)^T = \left(x(t)\otimes z(t)\right)^T (\cH^{(2)})^T.
\end{equation*}
Therefore, we can rewrite the system~\eqref{eq:Adjoint_QBDAE} as:
\begin{subequations}\label{eq:Adjoint_QBDAE1}
 \begin{align}
  \dot{x}(t) &= Ax(t) + H(x(t) \otimes x(t))  +  \sum_{k=1}^mN_kx(t)u_k(t) + Bu(t), \qquad~~~~~ x(0) = 0,\label{eq:state1}\\
  \dot{z}(t) &= -A^Tz(t) - \cH^{(2)} x(t)\otimes z(t) - \sum_{k=1}^m N_k^T u_k(t) z(t) - C^Tu_d(t),~ z(\infty) = 0,\\
  y_d(t) &= B^Tz(t).
 \end{align}
\end{subequations}
%\begin{remark}
In the meantime, we like to point out that there are two possibilities to define $\cA(x,u,t)$ in the case of a QB system. One is  $\cA(x,u,t) = A + H(x\otimes I) + \sum_{k=1}^mN_ku_k$, which we have used in the above discussion; however, there is  another possibility to define  $\cA(x,u,t)$  as  $\tilde{\cA}(x,u,t)= A + H(I\otimes x) + \sum_{k=1}^mN_ku_k$, leading to the nonlinear Hilbert adjoint operator whose  state-space realization is given as:
\begin{subequations}\label{eq:Adjoint_QBDAE2}
	\begin{align}
	\dot{x}(t) &= Ax(t) + H(x(t) \otimes x(t))  +  \sum_{k=1}^mN_kx(t)u_k(t) + Bu(t), \qquad~~~~~ x(0) = 0,\label{eq:state1}\\
	\dot{z}(t) &= -A^Tz(t) - \cH^{(3)} x(t)\otimes z(t) - \sum_{k=1}^m N_k^T u_k(t) z(t) - C^Tu_d(t),~ z(\infty) = 0,\\
	y_d(t) &= B^Tz(t).
	\end{align}
\end{subequations}
It can be noticed that  the realizations~\eqref{eq:Adjoint_QBDAE1} and~\eqref{eq:Adjoint_QBDAE2} are the same, except the appearance of $\cH^{(2)}$ in~\eqref{eq:Adjoint_QBDAE1} instead of $\cH^{(3)}$ in~\eqref{eq:Adjoint_QBDAE2}. Nonetheless, if one assumes that the Hessian $H$ is symmetric,  i.e., $H(u\otimes v) = H(v\otimes u)$ for $u,v\in \Rn$, then the mode-2 and mode-3 matricizations coincide, i.e., $\cH^{(2)} = \cH^{(3)}$. However, the Hessian $H$, obtained after discretization of the governing equations,  may not be symmetric; but as shown in~\cite{morBenB15} the Hessian can be modified in such a way that it becomes symmetric without any change in the system dynamics. Therefore, in the rest of the paper,    without loss of generality, we assume that the Hessian $H$ is symmetric.

Now, we turn our attention towards determining the observability Gramian for the QB system by utilizing the state-space realization of the Hilbert adjoint operator (dual system). For this, we follow the same steps as used for determining the reachability Gramian. Using the dual system~\eqref{eq:Adjoint_QBDAE1}, one can write the dual state $z(t)$ of the dual system at time $t$ as follows:
 \begin{equation*}
\begin{aligned}
 z(t) &= \int_{\infty}^t e^{-A^T(t-\sigma_1)}C^T u_d(\sigma_1)d\sigma_1 +  \sum_{k=1}^m \int_{\infty}^t e^{-A^T(t-\sigma_1)}N_k^T z(\sigma_1)u_k(\sigma_1)d\sigma_1, \\
 &\quad + \int_{\infty}^t e^{-A^T(t-\sigma_1)}\cH^{(2)}\left(x(\sigma_1)\otimes z(\sigma_1)\right)d\sigma_1,
 %&= \int\limits^{\infty}_t e^{A^T\sigma_1}C^T u_a(t-\sigma_1)d\sigma_1 +  \int\limits^{\infty}_t e^{A^T\sigma_1}\cH^{(2)}(z(t-\sigma_1)\otimes x(t-\sigma_1))d\sigma_1
\end{aligned}
\end{equation*}
which after an appropriate change of variable leads to
%We next make change of variable as $\tilde  \sigma_1   = \sigma_1-t $, which yields
\begin{equation}\label{eq:dual_newvar}
\begin{aligned}
z(t)  &= \int_{\infty}^0 e^{A^T \sigma_1}C^T u^{(d)}(t+\sigma_1)d\sigma_1 + \sum_{k=1}^m \int_{\infty}^0 e^{A^T\sigma_1}N_k^T z(t+\sigma_1)u_k(t+\sigma_1)d\sigma_1 \\
 &\quad + \int_{\infty}^0 e^{A^T\sigma_1}\cH^{(2)}\big(x(t+\sigma_1)\otimes z(t+\sigma_1)\big)d\sigma_1.
 \end{aligned}
\end{equation}
\Cref{eq:state1} gives the expression for $x(t+\sigma_1)$. This is
\begin{equation*}
\begin{aligned}
  x(t+\sigma_1) &= \int_0^{t+\sigma_1} e^{A\sigma_2}Bu(t+\sigma_1-\sigma_2)d\sigma_2 + \sum_{k=1}^m\int_0^{t+\sigma_1} \Big( e^{A\sigma_2}N_kx(t+\sigma_1-\sigma_2)  \\ 
  &   \times  u_k(t+\sigma_1-\sigma_2) \Big)d\sigma_2 + \int\limits_0^{t+\sigma_1} e^{A\sigma_2}H(x(t+\sigma_1-\sigma_2)\otimes x(t+\sigma_1-\sigma_2))d\sigma_2.
  \end{aligned}
\end{equation*}
We substitute for $x(t+\sigma_1)$ using the above equation, and $z(t+\sigma_1)$ using \eqref{eq:dual_newvar}, which gives rise to the following expression:
\begin{equation}\label{eq:sys_dual}
 \begin{aligned}
   z(t) &= \int_{\infty}^0 e^{A^T\sigma_1}C^T u_d(t+\sigma_1)d\sigma_1   +\sum_{k=1}^m  \int_{\infty}^0\int_{\infty}^0 e^{A^T\sigma_1}N_k^T \\ 
   &\quad \times e^{A^T\sigma_2}C^T u_d(t+\sigma_1+\sigma_2)u_k(t+\sigma_1)d\sigma_1d\sigma_2 + \int_{\infty}^0\int_0^{t+\sigma_1}\int_{\infty}^{0} e^{A^T\sigma_1} \\ 
   &\quad \times \cH^{(2)}\Big( e^{A\sigma_2}B\otimes e^{A^T\sigma_3}C^T \Big) u(t+\sigma_1-\sigma_2)u_d(t+\sigma_1+\sigma_3)d\sigma_1d\sigma_2d\sigma_3  + \cdots.
 \end{aligned}
\end{equation}
By repeatedly substituting  for the state $x$ and the dual state $z$, we derive the Volterra series for the dual system,
%This way, the Volterra series for the dual system  can be obtained by substituting repeatedly for the state $x$ and the dual state $z$, 
although the notation becomes much more complicated.  Carefully inspecting the kernels of the Volterra series of the dual system, we define the observability mapping $\bar Q$, similar to the  reachability mapping, as follows:
\begin{equation}\label{eq:obser_mapping}
 \bar{Q} = [ \bar Q_1,~\bar Q_2,~ \bar Q_3,\ldots],
\end{equation}
in which
\begin{align*}
 \bar Q_1(t_1) &=  e^{A^Tt_1}C^T, \\
 \bar Q_2(t_1,t_2) &=  e^{A^Tt_2}\bbm N^T_1,\cdots N_m^T\ebm \left(I_m\otimes \bar Q_1(t_1)\right),\\
% \bar Q_3(t_1,t_2,t_3) &=  e^{A^Tt_3}\Big[\cH^{(2)}\Big(\bar P_1(t_1) \otimes  \bar Q_1(t_2) \Big), N^T \bar Q_2(t_1,t_2)\Big],\\
 \vdots\qquad&\qquad\quad\vdots\nonumber\\
  \bar Q_i (t_1,\ldots, t_i)&= e^{A^Tt_i}\Big[\cH^{(2)} \big[\bar P_1(t_1)\otimes \bar Q_{i-2}(t_{2},\ldots,t_{i-1}), \nonumber \\ % \bar P_2(t_1,t_2)\otimes \bar Q_{i-3}(t_{3},\ldots,t_{i-1}),\nonumber\\
     &\qquad \qquad \ldots, \bar P_{i-2}(t_{1},\ldots,t_{i-2})\otimes \bar Q_1(t_{i-1})\big], \\%\bbm N^T_1,\cdots,N_m^T \ebm \left(I_m\otimes \bar Q_{i-1}(t_1,\ldots,t_{i-1})\right)\Big],\forall~i\geq 3,\\
  &\qquad \qquad\bbm N^T_1,\ldots,N^T_m\ebm\left(I_m\otimes \bar Q_{i-1}(t_1,\ldots,t_{i-1})\right)\Big],\forall~i\geq 3.
\end{align*}
where $\bar P_i(t_1,\ldots,t_i)$ are defined in \eqref{eq:defined_barP}. Based on the above observability mapping, we define the observability Gramian $Q$ of the QB system as
\begin{equation}\label{eq:Obser_Gram}
 Q = \sum_{i=1}^{\infty}Q_i\quad \mbox{with}\quad Q_i = \int_0^\infty\cdots \int_0^\infty \bar Q_i\bar Q_i^Tdt_1\cdots dt_i.
\end{equation}
Analogous to the reachability Gramian, we next show a relation between the observability Gramian and the solution of  a generalized  Lyapunov equation.
\begin{theorem}\label{thm:obser_gram}
 Consider the QB system~\eqref{eq:Quad_bilin_Sys}  with a stable matrix $A$, and let $Q$, defined in~\eqref{eq:Obser_Gram},  be the observability Gramian of the system and assume it exists.  Then, the Gramian $Q$ satisfies the following  Lyapunov equation:
 \begin{equation}\label{eq:obser_lyap}
 A^TQ + QA  +   \cH^{(2)}(P\otimes Q) (\cH^{(2)})^T + \sum_{k=1}^m N_k^TQN_k +C^TC = 0,
 \end{equation}
 where $P$ is the reachability Gramian of the system, i.e., the solution of the generalized quadratic Lyapunov equation~\eqref{eq:Cont_Gram}.
%   \begin{equation*}
%   AP + PA^T + NPN^T + H(P\otimes P) H^T + BB^T = 0.
%   \end{equation*}
\end{theorem}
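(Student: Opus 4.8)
The plan is to follow, almost verbatim, the strategy used for \Cref{thm:con_gram}: show that every term $Q_i$ of the series $Q=\sum_{i\ge1}Q_i$ from \eqref{eq:Obser_Gram} solves a Lyapunov equation whose right-hand side is assembled from the earlier observability terms $Q_1,\dots,Q_{i-1}$ and the reachability terms $P_1,\dots,P_{i-2}$, and then add these equations. For $i=1$ we have $Q_1=\int_0^\infty e^{A^Tt_1}C^TCe^{At_1}dt_1$, so, $A$ being stable, the integral representation of Lyapunov solutions (see \cite{morAnt05}) gives $A^TQ_1+Q_1A+C^TC=0$. For $i=2$ we substitute $\bar Q_2(t_1,t_2)=e^{A^Tt_2}[N_1^T,\dots,N_m^T]\bigl(I_m\otimes\bar Q_1(t_1)\bigr)$ into \eqref{eq:Obser_Gram}, use the elementary identity $[N_1^T,\dots,N_m^T](I_m\otimes M)[N_1^T,\dots,N_m^T]^T=\sum_{k=1}^m N_k^TMN_k$ with $M=\int_0^\infty\bar Q_1\bar Q_1^T\,dt_1=Q_1$, and read off $Q_2=\sum_k\int_0^\infty e^{A^Tt_2}N_k^TQ_1N_k e^{At_2}\,dt_2$, hence $A^TQ_2+Q_2A+\sum_k N_k^TQ_1N_k=0$.

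For $i\ge3$ I would expand $\bar Q_i\bar Q_i^T$ from the definition of $\bar Q_i$. Since $\bar Q_i$ is (up to the factor $e^{A^Tt_i}$) a \emph{block row} $\bigl[\,\cH^{(2)}[\bar P_1\otimes\bar Q_{i-2},\dots,\bar P_{i-2}\otimes\bar Q_1]\,,\ [N_1^T,\dots,N_m^T](I_m\otimes\bar Q_{i-1})\,\bigr]$, forming $\bar Q_i\bar Q_i^T$ produces no cross terms between the two super-blocks nor between the $i-2$ sub-blocks of the first one; it just yields $\cH^{(2)}\bigl(\sum_{j=1}^{i-2}(\bar P_j\bar P_j^T)\otimes(\bar Q_{i-1-j}\bar Q_{i-1-j}^T)\bigr)(\cH^{(2)})^T+\sum_k N_k^T(\bar Q_{i-1}\bar Q_{i-1}^T)N_k$, conjugated by $e^{A^Tt_i}$. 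Because $\bar P_j$ depends only on $t_1,\dots,t_j$ and $\bar Q_{i-1-j}$ only on $t_{j+1},\dots,t_{i-1}$, the multiple integral factors through the Kronecker product, so $\int\cdots\int(\bar P_j\bar P_j^T)\otimes(\bar Q_{i-1-j}\bar Q_{i-1-j}^T)=P_j\otimes Q_{i-1-j}$ and $\int\cdots\int\bar Q_{i-1}\bar Q_{i-1}^T=Q_{i-1}$. This identifies $Q_i=\int_0^\infty e^{A^Tt_i}\bigl[\cH^{(2)}(P_1\otimes Q_{i-2}+\cdots+P_{i-2}\otimes Q_1)(\cH^{(2)})^T+\sum_k N_k^TQ_{i-1}N_k\bigr]e^{At_i}\,dt_i$, whence $A^TQ_i+Q_iA+\cH^{(2)}(P_1\otimes Q_{i-2}+\cdots+P_{i-2}\otimes Q_1)(\cH^{(2)})^T+\sum_k N_k^TQ_{i-1}N_k=0$.

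Adding all these identities finishes the argument. The left parts sum to $A^TQ+QA$; the bilinear parts telescope to $\sum_k N_k^T\bigl(\sum_{j\ge1}Q_j\bigr)N_k=\sum_k N_k^TQN_k$; the quadratic parts reindex, using bilinearity of $\otimes$ and the Gramian definitions \eqref{eq:Cont_Gram} and \eqref{eq:Obser_Gram}, as $\sum_{i\ge3}\sum_{j=1}^{i-2}P_j\otimes Q_{i-1-j}=\bigl(\sum_{j\ge1}P_j\bigr)\otimes\bigl(\sum_{l\ge1}Q_l\bigr)=P\otimes Q$; and the $i=1$ term supplies $C^TC$. This is exactly \eqref{eq:obser_lyap}.

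The main obstacle I anticipate is not the combinatorics of the summation but the bookkeeping of the matricizations: one must check that the Volterra kernels of the dual system \eqref{eq:Adjoint_QBDAE1} genuinely carry the reachability kernels $\bar P_j$ in the first tensor slot and the observability kernels $\bar Q_l$ in the second, so that the quadratic term comes out as $\cH^{(2)}(P\otimes Q)(\cH^{(2)})^T$ rather than with $P$ and $Q$ swapped or with $\cH^{(3)}$ in place of $\cH^{(2)}$ — this is where the identities \eqref{eq:tensor_matricization} and the standing assumption that $H$ is symmetric (so $\cH^{(2)}=\cH^{(3)}$) are used. Convergence of $Q=\sum_iQ_i$ is assumed in the hypothesis, which legitimises interchanging the infinite summation with the linear Lyapunov operator $X\mapsto A^TX+XA$ and with the Kronecker product in the reindexing step.
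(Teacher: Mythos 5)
Your proposal is correct and is precisely the argument the paper has in mind: the paper's own ``proof'' of this theorem just states that it is analogous to the proof of \Cref{thm:con_gram} and omits the details, and your term-by-term Lyapunov equations for $Q_1$, $Q_2$, and $Q_i$ ($i\geq 3$), followed by the reindexing $\sum_{i\geq 3}\sum_{j=1}^{i-2}P_j\otimes Q_{i-1-j}=P\otimes Q$, supply exactly those details. The factorization of the multiple integral through the Kronecker product (since $\bar P_j$ and $\bar Q_{i-1-j}$ depend on disjoint sets of time variables) and the use of the existence assumption to justify interchanging the infinite sum with the Lyapunov operator are both handled correctly.
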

\begin{proof}
The proof of the above theorem is analogous to the proof of \Cref{thm:con_gram}; therefore, we skip it for the brevity of the paper.
\end{proof}

\begin{remark}
As one would expect, the Gramians for QB systems reduce to the Gramians for bilinear systems~\cite{morBenD11} if the quadratic term is zero, i.e., $H = 0$.
\end{remark}

%At this point, we see the demand to solve generalized quadratic Lyapunov equations to determine reachability  and observability Gramians.  We discuss their computational issues in Section 5.
Furthermore, it will also be  interesting to look at a truncated version of the Gramians of the QB system based on the leading kernels of the Volterra series. We call a truncated version of the Gramians  \emph{truncated} Gramians of QB systems.  For this, let us consider  approximate reachability and observability mappings as follows:
\begin{equation*}
\tilde P_\cT = \bbm \tilde P_1,\tilde  P_2,\tilde P_3\ebm,\qquad \tilde Q_\cT = \bbm \tilde Q_1, \tilde Q_2,\tilde Q_3\ebm,
\end{equation*}
where
\begin{align*}
 \tP_1(t_1) &=  e^{At_1}B, & \tQ_1(t_1)&= e^{A^Tt_1}C^T,\\
\tP_2(t_1,t_2) &=  e^{At_2}\bbm N_1,\ldots,N_m\ebm \left(I_m\otimes \tP_1(t_1)\right), \\ 
 \tQ_2(t_1,t_2)&= e^{A^Tt_2}\bbm N_1^T,\ldots,N_m^T\ebm \left(I_m\otimes \tQ_1(t_1)\right),\\
\tP_3(t_1,t_2,t_3)&= e^{At_3}H (\tP_1(t_1)\otimes \tP_1(t_2)),\\
  \tQ_3(t_1,t_2,t_3)&= e^{A^Tt_3}\cH^{(2)}(\tP_1(t_1)\otimes \tQ_1(t_2)).
\end{align*}
Then, one can define the truncated reachability and observability Gramians in the similar fashion as the Gramians of the system:
\begin{subequations}\label{eq:trun_con}
	\begin{align}
P_\cT & = \sum_{i=1}^3 \hP_i,\quad\text{where}\quad \hP_i =  \int_0^\infty \tP_i(t_1,\ldots,t_i)\tP_i^T(t_1,\ldots,t_i)dt_1\cdots dt_i, \label{eq:tru_con}\\
 Q_\cT &= \sum_{i=1}^3\hQ_i,\quad \text{where}\quad \hQ_i = \int_0^\infty \tQ_i(t_1,\ldots,t_i)\tQ_i^T(t_1,\ldots,t_i)dt_1\cdots dt_i,\label{eq:tru_obs}
\end{align}
\end{subequations}
respectively. Similar to the Gramians $P$ and $Q$,  in the following we derive the relation between these truncated Gramians and the solutions of the Lyapunov equations.
\begin{corollary}\label{coro:tru_gram}
Let  $P_\cT$ and $Q_\cT$ be the truncated Gramians of the QB system as defined in~\eqref{eq:trun_con}. Then, $P_\cT$ and $Q_\cT$ satisfy the following Lyapunov equations:
\begin{subequations}\label{eq:tru_gramians}
\begin{align}
AP_\cT+P_\cT A^T + H(\hP_1\otimes \hP_1)H^T  + \sum_{k=1}^mN_k\hP_1N_k^T + BB^T  &=0,\quad \mbox{and}\\
A^TQ_\cT+Q_\cT A +  \cH^{(2)}(\hP_1\otimes \hQ_1)(\cH^{(2)})^T+ \sum_{k=1}^m N_k^T\hQ_1N_k + C^TC &=0, \label{eq:tru_obs_Gram}
\end{align}
\end{subequations}
respectively, where $P_1$ and $Q_1$ are  solutions to the following Lyapunov equations:
\begin{align}
A\hP_1+\hP_1 A^T + BB^T &=0,\quad\mbox{and}\label{eq:linear_CG}\\
A^T\hQ_1+\hQ_1 A + C^TC &=0,\quad\text{respectively.}\label{eq:linear_OG}
\end{align}
\end{corollary}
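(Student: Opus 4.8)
The plan is to follow, term by term, the strategy of the proof of \Cref{thm:con_gram}, exploiting the fact that the truncated mappings $\tP_\cT$ and $\tQ_\cT$ retain only the first three Volterra kernels, and that $\tP_1 = \bar P_1$, $\tP_2 = \bar P_2$ (and $\tQ_1,\tQ_2$ likewise agree with the corresponding non-truncated kernels).

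\textbf{Reachability side.} First I would observe that $\hP_1 = \int_0^\infty e^{At_1}BB^Te^{A^Tt_1}\,dt_1$ is precisely the controllability Gramian of the linear part, so by the standard integral representation of the solution of a Lyapunov equation it satisfies \eqref{eq:linear_CG}. Since $\tP_2$ coincides with $\bar P_2$, the same computation that produced \eqref{eq:2c} gives $\hP_2 = \sum_{k=1}^m\int_0^\infty e^{At_2}N_k\hP_1N_k^Te^{A^Tt_2}\,dt_2$, hence $A\hP_2+\hP_2A^T+\sum_{k=1}^m N_k\hP_1N_k^T=0$. For $\hP_3$ the key step is the Kronecker mixed-product identity $(M_1\otimes M_2)(M_3\otimes M_4)=(M_1M_3)\otimes(M_2M_4)$, which gives
\[
\tP_3(t_1,t_2,t_3)\tP_3(t_1,t_2,t_3)^T = e^{At_3}H\big(\tP_1(t_1)\tP_1(t_1)^T\otimes\tP_1(t_2)\tP_1(t_2)^T\big)H^Te^{A^Tt_3};
\]
integrating first over $t_1$ and $t_2$ replaces the Kronecker factor by $\hP_1\otimes\hP_1$, and then the integral representation in $t_3$ yields $A\hP_3+\hP_3A^T+H(\hP_1\otimes\hP_1)H^T=0$. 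Adding the three Lyapunov equations for $\hP_1,\hP_2,\hP_3$ telescopes the left-hand sides into the first equation in \eqref{eq:tru_gramians}.

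\textbf{Observability side.} I would repeat the same three steps with $\tQ_1,\tQ_2,\tQ_3$ (recalling $\tP_1=\bar P_1$), obtaining $A^T\hQ_1+\hQ_1A+C^TC=0$, then $A^T\hQ_2+\hQ_2A+\sum_{k=1}^m N_k^T\hQ_1N_k=0$, and finally, after factoring $\big(\tP_1(t_1)\otimes\tQ_1(t_2)\big)\big(\tP_1(t_1)\otimes\tQ_1(t_2)\big)^T=\big(\tP_1(t_1)\tP_1(t_1)^T\big)\otimes\big(\tQ_1(t_2)\tQ_1(t_2)^T\big)$ and integrating, $A^T\hQ_3+\hQ_3A+\cH^{(2)}(\hP_1\otimes\hQ_1)(\cH^{(2)})^T=0$. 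Summing over $i=1,2,3$ gives \eqref{eq:tru_obs_Gram}. The dual computation that produces the kernels $\tQ_i$ is exactly the one sketched before \Cref{thm:obser_gram}, so only the integrals have to be carried out explicitly.

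\textbf{Where care is needed.} There is no genuine obstacle; the statement is essentially a bookkeeping corollary of \Cref{thm:con_gram} and \Cref{thm:obser_gram}. The one thing to verify carefully is that truncating the Volterra series at the third kernel is reflected consistently in the residual terms: because $\tP_3$ keeps only the single contribution $H(\tP_1\otimes\tP_1)$ and discards both the bilinear term $\sum_k N_k\hP_2N_k^T$ and the cross kernels $\bar P_1\otimes\bar P_2$, $\bar P_2\otimes\bar P_1$ present in $\bar P_3$, the residual in the $i=3$ reachability Lyapunov equation is $H(\hP_1\otimes\hP_1)H^T$ alone (and analogously $\cH^{(2)}(\hP_1\otimes\hQ_1)(\cH^{(2)})^T$ on the observability side) --- no higher-order Gramians $\hP_2$ or $\hQ_2$ enter those residuals. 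Existence and convergence of all the defining integrals follow immediately from the stability of $A$, exactly as in \Cref{thm:con_gram}.
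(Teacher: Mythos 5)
Your proposal is correct and follows essentially the same route as the paper's proof: identify $\hP_1=P_1$ and $\hP_2=P_2$ with their standard Lyapunov equations, compute $\hP_3$ via the Kronecker mixed-product factorization and the integral representation to get $A\hP_3+\hP_3A^T+H(\hP_1\otimes\hP_1)H^T=0$, sum the three equations, and treat the observability side analogously. Your added remark about which cross terms of $\bar P_3$ are discarded in $\tP_3$ is a correct and useful clarification but does not change the argument.
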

	\begin{proof}
		We begin by showing the relation between the truncated reachability Gramian $P_\cT$ and the solution of the Lyapunov equation. First, note that the first two terms of the reachability Gramian $P$~\eqref{eq:tru_con} and the truncated reachability Gramian $P_\cT$~\eqref{eq:Cont_Gram} are the same, i.e., $\hP_1 = P_1$ and $\hP_2 = P_2$, and $\hP_1$ and $\hP_2$ are the unique solutions of the following Lyapunov equations for a stable matrix $A$:
		\begin{align}
		A\hP_1 + \hP_1A^T +BB^T &=0, \quad\text{and} \label{eq:tru_P1}\\
		A\hP_2 + \hP_2A^T + \sum_{k=1}^mN_k\hP_1N_k^T = 0.\label{eq:tru_P2}
		\end{align}
		Now, we consider the third term in the summation~\eqref{eq:tru_con}. This is 
		\begin{align*}
		 P_3 &= \int_0^\infty\int_0^\infty\int_0^\infty \tP_3(t_1,t_2,t_3)\tP_3^T(t_1,t_2,t_3) dt_1dt_2dt_3\\
		 &= \int_0^\infty\int_0^\infty\int_0^\infty e^{At_3}H (\tP_1(t_1)\tP^T(t_1)\otimes \tP_1(t_2)\tP^T(t_2))H^T e^{A^Tt_3}dt_1dt_2dt_3 \\
 		 &= \int_0^\infty e^{At_3}H \left( \left(\int_0^\infty \tP_1(t_1)\tP^T(t_1)dt_1\right)\otimes \left(\int_0^\infty\tP_1(t_2)\tP^T(t_2)dt_2\right)\right)H^T e^{A^Tt_3}dt_3 \\
 		 &= \int_0^\infty e^{At_3}H \left( \hP_1 \otimes \hP_1\right)H^T e^{A^Tt_3}dt_3. 
		 \end{align*}
		 Furthermore, we use the relation between the above integral representation and the solution of Lyapunov equation to show that $\hP_3$ solves:
		 \begin{equation}\label{eq:tru_P3}
		 A\hP_3 + \hP_3A^T + H(\hP_1\otimes \hP_1)H^T =0 .
		 \end{equation}
		 Summing \eqref{eq:tru_P1}, \eqref{eq:tru_P2} and \eqref{eq:tru_P3} yields
		 \begin{equation}
		 AP_\cT + \cP_{\cT}A^T + H(\hP_1\otimes \hP_1)  + \sum_{k=1}^mN_k\hP_1N_k + BB^T=0.
		 \end{equation}
		 Analogously, we can show that $Q_\cT$ solves \eqref{eq:tru_obs_Gram}, thus concluding the proof.
	\end{proof}
	
	We will investigate the advantages of these truncated Gramians in the model reduction framework  in the later part of the paper.
	
%\rd{\begin{remark}
%	We have derived the Gramians and truncated Gramians for SISO QB systems; however, these can be derived for a MIMO QB system in a similar manner. For a MIMO QB system, the Gramians $P$ and $Q$ satisfy
%	   \begin{align*}
%	   AP + PA^T +H(P\otimes P) H^T +  \sum_{k=1}^mN_kPN_k^T + BB^T &= 0,\quad \mbox{and}\\
%	    A^TQ + QA + \cH^{(2)}(P\otimes Q) (\cH^{(2)})^T +  \sum_{k=1}^mN_k^TQN_k +C^TC &= 0,
%	    \end{align*}
%	\end{remark} 
%respectively. Furthermore, the truncated Gramians $P_\cT$ and $Q_\cT$ for a MIMO QB system solve
%\begin{align*}
%	AP_\cT+P_\cT A^T  + H(P_1\otimes P_1)H^T +  \sum_{k=1}^mN_kP_1N_k^T +BB^T&=0,\\
%	A^TQ_\cT+Q_\cT A  +  \cH^{(2)}(P_1\otimes Q_1)(\cH^{(2)})^T + \sum_{k=1}^mN_k^TQ_1N_k +C^TC&=0,
%\end{align*}
%where $P_1$ and $Q_1$ are the solutions of \eqref{eq:linear_CG} and \eqref{eq:linear_OG}, respectively. }
 Next, we study the connection between the proposed Gramians for the QB system  and energy functionals. Also, we show how the definiteness of the Gramians is related to reachability and observability of the QB systems. These all suggest us how to determine the state components that are hard to control as well as hard to observe.
\section{Energy  Functionals and MOR for QB systems} \label{sec:energyfunctionals}We start by establishing the conditions under which  the Gramians approximate the energy functionals of the QB system, in the quadratic forms.
\subsection{Comparison of energy functionals with  Gramians}\label{subsec:energy}
By using \Cref{thm:energy_function}, we obtain the following nonlinear partial differential equation, whose solution gives the controllability energy functional for the QB system:
%Let us make use of  \Cref{thm:energy_function} to have the following partial differential equation, whose solution gives the controllability energy functional for the QB system:
\begin{equation}\label{eq:control_energy_QB}
\begin{aligned}
 &\dfrac{\partial L_c}{\partial x}(Ax + H~x\otimes x) +  (Ax + H~x\otimes x)^T\dfrac{\partial L_c}{\partial x}^T  \\
 &\qquad + \dfrac{\partial L_c}{\partial x}\left(\bbm N_1,\ldots, N_m \ebm (I_m\otimes x) + B\right)\left(\bbm N_1,\ldots, N_m \ebm (I_m\otimes x) + B\right)^T \dfrac{\partial L_c}{\partial x} ^T = 0.
\end{aligned}
\end{equation}
 Unlike in the case of linear systems, the controllability energy functional $L_c(x)$ for nonlinear systems cannot be expressed as a simple quadratic form, i.e., $L_c(x) = x^T\tP^{-1}x$, where $\tP$ is a constant matrix. 
 
 For nonlinear systems, the energy functionals are rather complicated nonlinear functions, depending on the state vector. Thus, we aim at providing some bounds between the quadratic form of the proposed Gramians for QB systems and energy functionals. For the controllability energy functional, we  extend the reasoning given in \cite{morBenD11,bennertruncated} for bilinear systems.
 %, and for observability energy functionals, we follow the line of reasoning, given in~\cite{enefungray98}. 
 
\begin{theorem}\label{thm:con_bound}
Consider a controllable QB system~\eqref{eq:Quad_bilin_Sys}  with a stable matrix $A$. Let $P>0$ be  its reachability Gramian which is  the unique definite solution of the quadratic Lyapunov equation~\eqref{eq:cont_lyap}, and $L_c(x)$ denote the controllability energy functional of the QB system, solving~\eqref{eq:control_energy_QB}.  Then, there exists a neighborhood $W$ of $0$ such that
 % Let $P$ be  the controllability Gramian of QB system which is  the unique definite solution of the quadratic Lyapunov equation~\eqref{eq:cont_lyap} and $L_c$ be the controllability energy functional of the QB system~\eqref{eq:QBsys}, which solves~\eqref{}. Then, there exists a neighborhood $W$ of $0$ such that
	  \begin{equation*}
	  L_c(x) \geq \dfrac{1}{2}x^T P^{-1}x, ~\mbox{where}~x\in W(0).
	  \end{equation*}
\end{theorem}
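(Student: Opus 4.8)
The plan is to follow the strategy used for bilinear systems in \cite{morBenD11,bennertruncated}, adapted to the extra quadratic term. The controllability energy functional $L_c(x_0)$ is, by definition, the infimum of $\tfrac12\int_{-\infty}^0\|u\|^2\,dt$ over inputs that steer the system from $0$ at $t=-\infty$ to $x_0$ at $t=0$. The idea is to bound this infimum below by exhibiting, for an \emph{arbitrary} admissible control $u$, a lower bound on its energy in terms of the quadratic form $\tfrac12 x_0^TP^{-1}x_0$. To do this I would introduce the candidate storage function $V(x)=\tfrac12 x^TP^{-1}x$ and differentiate it along trajectories of \eqref{eq:QB_differential}: with $x=x(t)$,
\begin{align*}
\dot V(x) &= x^TP^{-1}\dot x\\
&= x^TP^{-1}\Bigl(Ax + H(x\otimes x) + \sum_{k=1}^m N_k x\,u_k + Bu\Bigr).
\end{align*}

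Next I would use the Lyapunov equation \eqref{eq:cont_lyap}, multiplied on both sides by $P^{-1}$, to replace $x^TP^{-1}(Ax)$ by the symmetric expression $-\tfrac12 x^TP^{-1}\bigl(H(P\otimes P)H^T + \sum_k N_kPN_k^T + BB^T\bigr)P^{-1}x$. Substituting this into $\dot V$, the goal is to complete the square in $u$ so that
\[
\dot V(x) \ge \tfrac12\|u\|^2 - \tfrac12\bigl\|u - B^TP^{-1}x\bigr\|^2 - (\text{cross terms in }N_k) - (\text{quadratic-term remainder}),
\]
and then to argue that the bilinear cross terms $x^TP^{-1}N_kx\,u_k$ and the quadratic remainder involving $H$ are dominated, for $x$ in a sufficiently small neighborhood $W(0)$, by the "good" negative term coming from $-\tfrac12 x^TP^{-1}\bigl(\sum_k N_kPN_k^T + H(P\otimes P)H^T\bigr)P^{-1}x$. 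Concretely I expect an estimate of the shape $\dot V(x) \le \tfrac12\|u\|^2$ to hold on $W(0)$ (after the square-completion and the local domination), so that integrating from $-\infty$ to $0$ along any steering trajectory, using $V(x(-\infty))=0$ and $V(x(0))=\tfrac12 x_0^TP^{-1}x_0$, gives
\[
\tfrac12 x_0^TP^{-1}x_0 = V(x_0) = \int_{-\infty}^0 \dot V\,dt \le \tfrac12\int_{-\infty}^0\|u\|^2\,dt,
\]
and taking the infimum over admissible $u$ yields $L_c(x_0)\ge \tfrac12 x_0^TP^{-1}x_0$.

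The main obstacle is the local domination step: unlike the bilinear case, here one must control both the bilinear terms (linear in $x$, as before) and the quadratic term $H(x\otimes x)$ (which contributes a cubic term $x^TP^{-1}H(x\otimes x)$ to $\dot V$ and has no counterpart of matching degree among the negative terms). The key observation making this work is that near the origin the cubic term is of higher order than the quadratic negative terms, so it can be absorbed on a small enough neighborhood; making this rigorous requires care with the norm estimates and with the fact that $W(0)$ must also be chosen so that the steering trajectory realizing (nearly) the infimum stays inside $W(0)$ — one typically restricts to $x_0$ small enough that the optimal trajectory does not leave the region, invoking continuity/asymptotic-stability of the controlled dynamics as in \Cref{thm:energy_function}. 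A secondary technical point is justifying the manipulation with $P^{-1}$, which is legitimate precisely because $P>0$ is assumed to be the unique positive definite solution of \eqref{eq:cont_lyap}.
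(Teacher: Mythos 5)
Your proposal is correct in outline, but it takes a genuinely different route from the paper. The paper does \emph{not} use a storage-function/completion-of-squares argument. Instead, it fixes the (near-)optimal control $u$ and the associated trajectory, freezes them inside the coefficients to view the QB dynamics as a linear \emph{time-varying} system $\dot x = A_u x + Bu$ with $A_u = A + H(x\otimes I)+\sum_k N_k u_k$, and invokes the classical LTV energy bound $\|u\|_{L_2}^2 \ge x_0^T P_u^{-1}x_0$ for the time-varying reachability Gramian $P_u$. The remaining work is to show $P\ge P_u$: this is done by writing $x_0^TPx_0 = -\int_0^\infty \tfrac{d}{dt}\bigl(\eta^TP\eta\bigr)\,dt$ for $\eta(t)=\tilde\Phi(t,0)x_0$ built from the adjoint fundamental solution, substituting the quadratic Lyapunov equation \eqref{eq:cont_lyap}, and isolating a remainder \eqref{eq:HNP_rel} that must be nonnegative; that nonnegativity is then argued exactly as in your "local domination" step, by taking $x_0$ (hence $u$ and the trajectory) small. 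Your dissipation-inequality route — differentiate $V(x)=\tfrac12 x^TP^{-1}x$, use \eqref{eq:cont_lyap} conjugated by $P^{-1}$, complete the square in $u$, and absorb the bilinear cross terms and the cubic $x^TP^{-1}H(x\otimes x)$ term locally — reaches the same conclusion and has the minor advantage of working with $P^{-1}$ directly, so it never needs to pass from a quadratic-form inequality $x_0^TPx_0\ge x_0^TP_ux_0$ to the inverted inequality. What it gives up is the clean reduction to known LTV theory. Two points to note: (i) your first displayed inequality should be an identity, $\dot V = \tfrac12\|u\|^2-\tfrac12\|u-B^TP^{-1}x\|^2 + (\text{remainder})$, with the desired bound $\dot V\le\tfrac12\|u\|^2$ following once the remainder is shown nonpositive; and (ii) the local domination of the indefinite terms by the negative semidefinite ones (both for the $N_k$ cross terms, which need $\|u\|_{L_\infty}$ small, and for the cubic $H$ term, which is delicate when $x$ is nearly in the kernel of $H^TP^{-1}$) is exactly as heuristic in your argument as the corresponding step \eqref{eq:HNP_rel} is in the paper's, so you are not losing rigor relative to the published proof.
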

\begin{proof}
	Consider a state $x_0$ and let a control input $u=u_{0}:(-\infty,0]\rightarrow \Rm$, which minimizes the input energy in the definition of $L_o(x_0)$ and steers the system from $0$ to $x_0$. Now, we consider the time-varying homogeneous nonlinear differential equation
	\begin{equation}\label{eq:phi_TV}
	\dot \phi = \left(A + H(\phi\otimes I) +\sum_{k=1}^mN_ku_k(t) \right) \phi =: A_{u}\phi(t),
	\end{equation}
 and its fundamental solution $\Phi_{u}(t,\tau)$. The system~\eqref{eq:phi_TV} can thus be interpreted as a time-varying system. The reachability Gramian of the time-varying control system~\cite{shokoohi1983linear,verriest1983generalized} $\dot x = A_{u}x(t) + Bu(t)$ is  given by
\begin{equation*}
 P_u = \int_{-\infty}^0\Phi(0,\tau)BB^T\Phi(0,\tau)^Td\tau.
\end{equation*}
The input $u$ also steers the time-varying system from $0$ to $x_0$; therefore, we have
$$\|u\|_{L_2}^2 \geq \dfrac{1}{2}x^TP_u^{-1}x.$$
An alternative way to determine $P_u$ can be given by
\begin{equation*}
 P_u = \int^{\infty}_0\tilde\Phi(t,0)^TBB^T\tilde\Phi(t,0)dt,
\end{equation*}
where $\tilde\Phi$ is the fundamental solution of the following differential equation
 \begin{equation}\label{eq:tildephi_TV}
  \dot {\tilde\Phi} = \left(A^T + \cH^{(2)}(x(-t)\otimes I) +\sum_{k=1}^mN_k^Tu_k(-t) \right) \tilde\Phi~~~ \mbox{with}~~~\Phi(t,t) = I,
 \end{equation}
 and $x(t)$ is the solution of
 $$\dot x(t) = Ax(t) + H(x\otimes x) + \sum_{k=1}^mN_kx(t)u_k(t) + Bu(t).$$
Then, we define $\eta(t)$, satisfying $\eta(t) = \tilde\Phi(t,0)x_0$.  Since it is assumed that the QB system is controllable,  the state $x_0$ can be reached by using a finite input energy, i.e., $\|u\|_{L_2} <\infty$. Hence, the input $u(t)$ is a  square-integrable function over $t \in (-\infty,0]$ and so is $x(t)$. This implies that $\lim\limits_{t\rightarrow\infty}\eta(t) \rightarrow 0$, provided $A$ is stable. Thus, we have
\begin{align*}
 x_0^TPx_0 &= -\int_{0}^\infty \dfrac{d}{dt} \left(\eta(t)^TP\eta(t)\right)dt \allowdisplaybreaks\\
 & =-\int_0^\infty  \eta(t)^T\left( \left(A + H(x(-t)\otimes I) +\sum_{k=1}^mN_ku_k(-t) \right)P  \right. \\
 & \qquad \left. + P\left( A^T + \cH^{(2)}(x(-t)\otimes I) +\sum_{k=1}^mN_k^Tu_k(-t)\right) \right)\eta(t)dt \allowdisplaybreaks \\
  & =-\int_0^\infty  \eta(t)^T \left(AP + PA^T + H(P\otimes P)H^T + \sum_{k=1}^mN_kPN_k^T \right) \eta(t)  \\
  & \qquad+ \Bigg( H(P\otimes P)H^T - H(x(-t)\otimes I)P -P \cH^{(2)}(x(-t)\otimes I)   \\
 &\qquad    +  \sum_{k=1}^m\left(N_kPN_k -PN_k^Tu_k(-t) -N_k^TPu_k(-t) \right) \Bigg)\eta(t)dt.
 \end{align*}
Now, we have
\begin{multline*}
  -\int_0^\infty  \eta(t)^T \left(AP + PA^T + H(P\otimes P)H^T + \sum_{k=1}^mN_kPN_k^T \right) \eta(t) \\ =\int_0^\infty  \eta(t)^T  BB^T 
   \eta(t) = x_0^TP_ux_0.
\end{multline*}
Hence, if
\begin{multline}\label{eq:HNP_rel}
 \int_0^\infty \eta(t)^T \bigg( H(P\otimes P)H^T - H(x(-t)\otimes I)P -P \cH^{(2)}(x(-t)\otimes I)   \\
   +  \sum_{k=1}^m\left(N_kPN_k -PN_k^Tu_k(-t) -N_k^TPu_k(-t) \right) \bigg)\eta(t)dt \geq 0,
\end{multline}
then $x_0^TPx_0 \geq x_0^T P_ux_0$.
% which holds for all states which are reachable and the relation \eqref{eq:HNP_rel} holds.   This implies $x_0^TP^{-1}x_0 \leq x_0^T P_u^{-1}x_0 = E_c(x_0)$ (cf. \cite[Thm. 7.7.4]{horn2012matrix}). 
Further, if  $x_0$ lies in a small ball   $W$ in the neighborhood of  the origin, i.e., $x_0\in W(0)$, then a small input $u$  is sufficient to  steer the system from $0$ to $x_0$ and  $x(t) \in W(0) $ for $t \in (-\infty,0]$ which ensures that the relation~\eqref{eq:HNP_rel} holds for all $x_0 \in W(0)$.  Therefore, we have $x_0^TP^{-1}x_0 \leq x_0^T P_u^{-1}x_0$ if $x_0 \in W (0) $.
\end{proof}
Similarly,  we next show an upper bound for the observability energy functional for the QB system in terms of the observability Gramian (in the quadratic form).
\begin{theorem}\label{thm:obs:bound}
 Consider the QB system~\eqref{eq:Quad_bilin_Sys} with $B\equiv 0 $ and an initial condition $x_0$, and let  $L_o$ be the observability energy functional.  Let  $P>0$ and $Q\geq 0$ be  solutions to the generalized Lyapunov equations~\eqref{eq:cont_lyap} and~\eqref{eq:obser_lyap}, respectively. Then, there exists a neighborhood $\tW$ of the origin such that
 \begin{equation*}
  L_o(x_0) \leq  \dfrac{1}{2}x^TQx,\quad \mbox{where}\quad x\in\tW(0).
 \end{equation*}
\end{theorem}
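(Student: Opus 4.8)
The plan is to dualize the argument behind \Cref{thm:con_bound}. Fix an admissible input $u\in\cB_{(\alpha,\beta)}$ for which the trajectory $x(\cdot)$ of~\eqref{eq:Quad_bilin_Sys} with $B\equiv0$ satisfies $x(0)=x_0$ and $x(\infty)=0$. Since $B\equiv0$, this trajectory solves the homogeneous linear time-varying equation $\dot x = A_u(t)\,x$ with $A_u(t):=A+H\bigl(x(t)\otimes I\bigr)+\sum_{k=1}^mN_ku_k(t)$, i.e., with the matrix $\cA(x,u,t)$ identified for the QB system above~\eqref{eq:Adjoint_QBDAE}; hence $x(t)=\Phi_u(t,0)x_0$ and $\int_0^\infty\|Cx(t)\|^2\,dt = x_0^TQ_ux_0$, where $Q_u:=\int_0^\infty\Phi_u(t,0)^TC^TC\,\Phi_u(t,0)\,dt\succeq0$ is the observability Gramian of this LTV system. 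Consequently $L_o(x_0)=\tfrac12\sup_u x_0^TQ_ux_0$ over admissible $u$, so it suffices to prove that $x_0^TQ_ux_0\le x_0^TQx_0$ for every admissible $u$ whenever $x_0$ lies in a sufficiently small ball.

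The key step is to differentiate $x(t)^TQx(t)$ along the trajectory and use the observability Lyapunov equation~\eqref{eq:obser_lyap}. From $\tfrac{d}{dt}\bigl(x^TQx\bigr)=2x^TQ\dot x = x^T(A^TQ+QA)x + 2x^TQH(x\otimes x) + 2\sum_ku_k\,x^TQN_kx$ together with $A^TQ+QA=-\bigl(\cH^{(2)}(P\otimes Q)(\cH^{(2)})^T+\sum_kN_k^TQN_k+C^TC\bigr)$ one obtains
\begin{equation*}
\|Cx\|^2 = -\frac{d}{dt}\bigl(x^TQx\bigr) - x^T\Bigl(\cH^{(2)}(P\otimes Q)(\cH^{(2)})^T+\textstyle\sum_kN_k^TQN_k\Bigr)x + 2x^TQH(x\otimes x) + 2\textstyle\sum_ku_k\,x^TQN_kx .
\end{equation*}
Integrating over $[0,\infty)$, the boundary term contributes $x_0^TQx_0$ because $x(\infty)=0$; moreover $P\otimes Q\succeq0$ (as $P\succ0$, $Q\succeq0$), so $\cH^{(2)}(P\otimes Q)(\cH^{(2)})^T\succeq0$ and $\sum_kN_k^TQN_k\succeq0$, and discarding these non-negative terms yields
\begin{equation*}
x_0^TQ_ux_0 = \int_0^\infty\|Cx(t)\|^2\,dt \ \le\ x_0^TQx_0 + \int_0^\infty\Bigl(2x^TQH(x\otimes x) + 2\textstyle\sum_ku_k\,x^TQN_kx\Bigr)dt .
\end{equation*}

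It remains to show the remainder integral is non-positive, and this is the main obstacle; it is where stability of $A$, the $L_2$- and $L_\infty$-bounds in \Cref{def:obser3}, and localization near the origin all enter. For $x_0$ in a small enough neighborhood $\tW$ of $0$, admissibility (together with stability of $A$) forces the whole trajectory to remain in $\tW$ and to decay exponentially, so that $\int_0^\infty\|x(t)\|^3\,dt=\cO(\|x_0\|^3)$ makes the cubic contribution $2x^TQH(x\otimes x)$ genuinely of higher order than $x_0^TQx_0=\cO(\|x_0\|^2)$. The delicate term is the input-bilinear one $2\sum_ku_k\,x^TQN_kx$, which is only quadratic in $x$: using $\|u\|_{L_\infty}\le\beta$ and the decay of $x(t)$ one estimates $\int_0^\infty\|u(t)\|\,x^TQx\,dt$ and shows it is absorbed once $\tW$ (equivalently, the admissible input set steering the system to $x_0$) is taken small enough — precisely the mechanism invoked at the end of the proof of \Cref{thm:con_bound}. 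Combining these estimates gives $x_0^TQ_ux_0\le x_0^TQx_0$ for every admissible $u$, and taking the supremum over $u$ yields $L_o(x_0)\le\tfrac12 x^TQx$ for $x\in\tW(0)$.
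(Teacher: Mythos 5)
Your setup---differentiating $x(t)^TQx(t)$ along the trajectory, substituting $A^TQ+QA$ from~\eqref{eq:obser_lyap}, and integrating against $x(\infty)=0$---is exactly the paper's computation, and the repackaging of $L_o$ as $\sup_u \tfrac12 x_0^TQ_ux_0$ over LTV observability Gramians is a correct dualization of \Cref{thm:con_bound}. The genuine gap is the step where you \emph{discard} the terms $\cH^{(2)}(P\otimes Q)(\cH^{(2)})^T$ and $\sum_kN_k^TQN_k$. Those negative-semidefinite contributions are not slack: they are precisely what dominates the sign-indefinite remainder $2x^TQH(x\otimes x)+2\sum_ku_k\,x^TQN_kx$ near the origin, and once they are thrown away the remainder integral has no reason to be non-positive. ``Higher order'' does not mean ``non-positive'': in the scalar case with $m=1$, $N=0$, $h>0$, $q>0$ and a small $x_0>0$, the trajectory satisfies $x(t)>0$ for all $t$, so $\int_0^\infty 2qh\,x^3\,dt>0$ and your bound only yields $L_o(x_0)\le\tfrac12x_0^TQx_0+\cO(\|x_0\|^3)$, which is strictly weaker than the claimed inequality; it is the retained term $-h^2pq\,x^2$ that beats $2qh\,x^3$ for $x$ small. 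The input-bilinear term is worse still: $2\sum_ku_k\,x^TQN_kx$ is of the \emph{same} order $\cO(\|x_0\|^2)$ as $x_0^TQx_0$, and the $L_\infty$ bound $\beta$ is a fixed constant in $\cB_{(\alpha,\beta)}$, so shrinking $\tW$ rescales both sides identically and there is nothing left for this term to be ``absorbed'' into---the only available dominating quantity is again the discarded $-\sum_kN_k^TQN_k$.

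The paper avoids this by keeping the remainder paired as $R_H(x,u)+\sum_kR_{N_k}(x,u)$ with $R_{N_k}=QN_ku_k+N_k^TQu_k-N_k^TQN_k$ and $R_H=QH(I\otimes x)+QH(x\otimes I)-\cH^{(2)}(P\otimes Q)(\cH^{(2)})^T$, and then arguing each block is negative semidefinite for small $\|u\|_{L_\infty}$ and $x$ near the origin via a kernel-compatibility observation: $v^TN_k^TQN_kv=0$ forces $QN_kv=0$, and (using $P>0$) $w^T\cH^{(2)}(P\otimes Q)(\cH^{(2)})^Tw=0$ forces $QH(w\otimes I)=QH(I\otimes w)=0$, so the indefinite parts vanish exactly where the definite parts do. To repair your argument, do not separate the cross terms from the negative quadratic terms; keep them together and run that comparison termwise.
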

\begin{proof}
Using the definition of the observability energy functional, see~\Cref{def:obser3}, we have
\begin{equation}
 L_o(x_0) = \underset{\begin{subarray}{c}
 	u\in \cB_{(\alpha,\beta)} \\[5pt]
 	x(0) = x_0,x(\infty)=0
 	\end{subarray}  }{\max} \dfrac{1}{2}\int_0^\infty \tL_o(x_0,u)dt,
\end{equation}
 where $\cB_{(\alpha,\beta)} \overset{\mathrm{def}}{=} \{u \in L_2^m[0,\infty), \|u\|_{L_2}\leq \alpha, \|u\|_{L_\infty}\leq \beta \}$ and $\tL_o(x_0,u) := \|y(t)\|_2$. Thus, we have
\begin{align*}
 \tL_o(x_0,u) &= \|y(t)\|_2 = \|Cx(t)\|_2 =  x(t)^TC^TCx(t).
\end{align*}
Substituting for $C^TC$ from~\eqref{eq:obser_lyap}, we obtain
\begin{flalign*}
 \tL_o(x_0,u)&=  -2x(t)^TQAx(t) -  x(t)^T\cH^{(2)} P\otimes Q \big(\cH^{(2)}\big)^T x(t) - \sum_{k=1}^mx(t)^TN_k^TQN_kx(t) .
 \end{flalign*}
Next, we substitute for $Ax$ from~\eqref{eq:Quad_bilin_Sys} (with $B = 0$) to have
\begin{flalign*}
 \tL_o(x_0,u)&= -2x(t)^TQ\dot{x}(t)  + 2x(t)^TQHx(t)\otimes x(t) + 2\sum_{k=1}^mx(t)^TQN_kx(t)u_k(t)\\
 &\qquad -x(t)^T\cH^{(2)} \left(P\otimes Q\right) \big(\cH^{(2)}\big)^T x(t)  - \sum_{k=1}^mx(t)^TN_k^TQN_kx(t) \displaybreak\\
 &=  - \dfrac{d}{dt}(x(t)^TQx(t)) +  x(t)^T\Big( QH (I\otimes x(t)) + QH (x(t)\otimes I) \\
 &\quad + \sum_{k=1}^m(QN_k+ N^T_k Q)u_k(t)-\cH^{(2)} (P\otimes Q)  \big(\cH^{(2)}\big)^T- \sum_{k=1}^mN_k^TQN_k \Big) x(t).
 %&= \dfrac{1}{2}\int_0^\infty - \dfrac{d}{dt}(x(t)^TQx(t))dt + \dfrac{1}{2}\int_0^\infty ( 2x^TQHx(t)\otimes x(t)  -x^T\cH Q_l\otimes P_l \cH^T) x dt
 \end{flalign*}
 This gives
 \begin{align*}
 L_o(x_0) &= \underset{\begin{subarray}{c}
 	u\in \cB_{(\alpha,\beta)} \\[5pt]
 	x(0) = x_0,x(\infty)=0
 	\end{subarray}  }{\max} \dfrac{1}{2}\int_0^\infty \tL_o(x_0,u)dt,\\
  &= \dfrac{1}{2}x_0^TQx_0  +  \underset{\begin{subarray}{c}
  	u\in \cB_{(\alpha,\beta)} \\[5pt]
  	x(0) = x_0,x(\infty)=0
  	\end{subarray}  }{\max}\dfrac{1}{2}\int_0^\infty x(t)^T\left(R_H(x,u) + \sum_{k=1}^mR_{N_k}(x,u)\right) x(t)dt,
\end{align*}
where
\begin{align*}
R_H(x,u) &:=   QH (I\otimes x) + QH (x\otimes I) - \cH^{(2)} (P\otimes Q)  \big(\cH^{2}\big)^T, \\
 R_{N_k}(x,u) &:=  \left(QN_ku_k + N_k^T Qu_k- N_k^TQN_k\right).
\end{align*}
First, note that if for a vector $v$, $v^TN_k^TQN_kv = 0$, then $QN_kv = 0$. Therefore, there exist inputs $u$ for which $\|u\|_{L_\infty}$ is small, ensuring $R_{N_k}(x,u)$ is a negative semidefinite. Similarly, if for a vector $w$, $w^T\cH^{(2)} (P\otimes Q)  \big(\cH^{2}\big)^T w = 0$ and $P>0$, then $ (I\otimes Q)  \big(\cH^{2}\big)^Tw= 0$. Using \eqref{eq:tensor_matricization}, it can be shown that $QH(w\otimes I) = QH(I\otimes w) = 0$.  Now, we consider an initial condition $x_0$ lies in the small neighborhood of the origin and $u \in \cB_{(\alpha,\beta)}$  ensuring that the resulting trajectory $x(t)$ for all time $t$ is such that $R_H(x,u)$ is a negative semi-definite. Finally, we get
 \begin{align*}
 L_o(x_0)- \dfrac{1}{2}x_0^TQx_0 &\leq 0,
 \end{align*}
for $x_0$ lies in the neighborhood of the origin and for the inputs $u$, having small $L_2$ and $L_\infty$ norms and $x_0 \in \tW(0)$ This concludes the proof.
\end{proof}

Until this point, we have proven that in the neighborhood of the origin, the energy functionals of the QB system can be  approximated by the Gramians in the quadratic form. However, one can also prove similar bounds for the energy functionals using the truncated Gramians for QB systems (defined in \Cref{coro:tru_gram}). We summarize this in the following corollary.
\begin{corollary}\label{coro:tru_energy}
Consider the system~\eqref{eq:Quad_bilin_Sys}, having a stable matrix $A$, to be locally reachable and observable. Let $L_c(x)$ and $L_o(x)$  be controllability and observability energy functionals of the system, respectively, and the truncated Gramians $P_\cT>0$ and $Q_\cT >0$ be solutions to the Lyapunov equations as shown in \Cref{coro:tru_gram}.  Then, \
\begin{enumerate}[label=(\roman*)]
	\item there exists a neighborhood $W_\cT$ of the origin such that
	\begin{equation*}
	L_c(x) \geq \frac{1}{2}x^T P_\cT^{-1}x,~\mbox{where}~x\in W_\cT(0).
	\end{equation*}
	\item Moreover,  there also exists a neighborhood $\tW_\cT$ of the origin, where
	\begin{equation*}
	L_o(x) \leq  \frac{1}{2}x^T Q_\cT x,~\mbox{where}~x\in \tW_\cT(0).
	\end{equation*}
\end{enumerate}
  \end{corollary}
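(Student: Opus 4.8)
The plan is to run the arguments of \Cref{thm:con_bound} and \Cref{thm:obs:bound} essentially verbatim, the only change being that wherever the generalized Lyapunov equations \eqref{eq:cont_lyap} and \eqref{eq:obser_lyap} were invoked we instead invoke the truncated Lyapunov equations of \Cref{coro:tru_gram}. The two truncated equations differ from the full ones only by having $H(\hP_1\otimes\hP_1)H^T$ and $\sum_k N_k\hP_1 N_k^T$ in place of $H(P\otimes P)H^T$ and $\sum_k N_k P N_k^T$ (respectively $\cH^{(2)}(\hP_1\otimes\hQ_1)(\cH^{(2)})^T$ and $\sum_k N_k^T\hQ_1 N_k$ in place of $\cH^{(2)}(P\otimes Q)(\cH^{(2)})^T$ and $\sum_k N_k^T Q N_k$); since local reachability/observability forces $\hP_1>0$ and $\hQ_1>0$, these substituted terms stay positive semidefinite, which is exactly the sign bookkeeping that drives both theorems.

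For part~(i), given $x_0$ near the origin and the minimum-energy input $u$ steering $0\to x_0$, I would introduce the same time-varying system \eqref{eq:phi_TV}, its fundamental solution $\tilde\Phi$, and the associated time-varying reachability Gramian $P_u$ as in \Cref{thm:con_bound}; note that $P_u$ depends only on the trajectory $(x,u)$, not on which algebraic Gramian we compare against, so still $\tfrac12 x_0^T P_u^{-1}x_0\le\|u\|_{L_2}^2$. Then I would compute $x_0^T P_\cT x_0 = -\int_0^\infty\frac{d}{dt}\bigl(\eta(t)^T P_\cT\eta(t)\bigr)\,dt$ with $\eta(t)=\tilde\Phi(t,0)x_0$, expand using \eqref{eq:tildephi_TV}, and substitute $A P_\cT+P_\cT A^T$ from \Cref{coro:tru_gram}. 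This produces $x_0^T P_\cT x_0 = x_0^T P_u x_0 + \int_0^\infty\eta(t)^T[\,\cdot\,]\eta(t)\,dt$, where the bracket is the truncated analogue of \eqref{eq:HNP_rel}: the positive-semidefinite pieces $H(\hP_1\otimes\hP_1)H^T$ and $\sum_k N_k\hP_1 N_k^T$, minus cross terms that each carry a factor $x(-t)$ or $u_k(-t)$. A small $x_0$ is reached by a small input with a small trajectory, so the same localization step as in \Cref{thm:con_bound} makes the integral nonnegative on a (possibly smaller) neighborhood $W_\cT$, whence $x_0^T P_\cT x_0\ge x_0^T P_u x_0$ and therefore $\tfrac12 x_0^T P_\cT^{-1}x_0\le\tfrac12 x_0^T P_u^{-1}x_0\le L_c(x_0)$, exactly mirroring \Cref{thm:con_bound}. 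For part~(ii) I would mirror \Cref{thm:obs:bound} with $B\equiv0$: starting from \Cref{def:obser3}, write $\tilde L_o(x_0,u)=x(t)^T C^T C x(t)$, substitute $C^T C$ from \eqref{eq:tru_obs_Gram}, substitute $Ax=\dot x-H(x\otimes x)-\sum_k N_k x u_k$, and integrate to get $L_o(x_0)=\tfrac12 x_0^T Q_\cT x_0 + \tfrac12\sup_u\int_0^\infty x(t)^T\bigl(R_H+\sum_k R_{N_k}\bigr)x(t)\,dt$ with $R_H$, $R_{N_k}$ the truncated versions of the matrices in \Cref{thm:obs:bound}. The kernel arguments then carry over: $v^T N_k^T\hQ_1 N_k v=0\Rightarrow\hQ_1 N_k v=0$ since $\hQ_1\ge0$, and on the nullspace of $\cH^{(2)}(\hP_1\otimes\hQ_1)(\cH^{(2)})^T$ the $H$-cross terms vanish via \eqref{eq:tensor_matricization} (using $\hP_1>0$); off these nullspaces the semidefinite pieces dominate the cross terms once $\|u\|_{L_\infty}$, $\|u\|_{L_2}$ and the induced $\|x(t)\|$ are small, so $R_H+\sum_k R_{N_k}\le0$ along the trajectory for $x_0$ in a neighborhood $\tW_\cT$, giving $L_o(x_0)\le\tfrac12 x_0^T Q_\cT x_0$.

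The one place where this is genuinely more than a transcription, and the step I expect to be the main obstacle, is that the cross terms still feature the \emph{full} truncated Gramians $P_\cT$, $Q_\cT$, whereas the positive-semidefinite pieces now carry only the first-order parts $\hP_1$, $\hQ_1$; thus the clean completion-of-squares structure underlying \Cref{thm:con_bound}--\Cref{thm:obs:bound} is slightly degraded. The real work is the uniform localization: since $P_\cT-\hP_1=\hP_2+\hP_3$ and $Q_\cT-\hQ_1=\hQ_2+\hQ_3$ are fixed (and positive semidefinite, $A$ being stable) while their contribution to the cross terms is multiplied by the small factors $x(-t)$, $u_k(-t)$, one shrinks the neighborhood further so that the sign of the bracket in (i) and of $R_H+\sum_k R_{N_k}$ in (ii) is preserved --- precisely the (admittedly brief) neighborhood argument used at the end of the two theorems, now carried out with $\hP_1$, $\hQ_1$ occupying the quadratic slots. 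Everything else is the substitution $P\otimes P\rightsquigarrow\hP_1\otimes\hP_1$ and $P\otimes Q\rightsquigarrow\hP_1\otimes\hQ_1$.
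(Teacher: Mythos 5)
Your proposal is correct and coincides with the paper's intended argument: the paper states this corollary without proof, relying precisely on the observation that the proofs of \Cref{thm:con_bound} and \Cref{thm:obs:bound} go through verbatim once the generalized Lyapunov equations are replaced by the truncated ones of \Cref{coro:tru_gram}. You also correctly identify and resolve the only genuine asymmetry --- the positive-semidefinite blocks now carry $\hP_1,\hQ_1$ while the cross terms carry $P_\cT,Q_\cT$ --- by noting that local reachability/observability gives $\hP_1>0$, $\hQ_1>0$, so the kernel and localization steps survive after possibly shrinking the neighborhood.
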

In what follows, we illustrate the above bounds using Gramians and truncated Gramians by considering a scalar dynamical system, where $A,H,N,B,C$ are scalars, and are denoted by $a,h,n,b,c$, respectively.
\begin{example}
 Consider a scalar system $(a,h,n,b,c)$, where $a<0$ (stability) and nonzero $h,b,c$. For simplicity, we take $n = 0$ so that we can easily obtain analytic expressions for the controllability  and observability energy functionals, denoted by $L_c(x)$ and $L_o(x)$, respectively. Assume that the system is reachable on $\R$. Then, $L_c(x)$ and $L_o(x)$ can be  determined via solving partial differential equations~\eqref{eq:Obser_Diff} and~\eqref{eq:Cont_Diff} (with $g(x) = 0$), respectively. These are:
 \begin{align*}
  L_c(x) &= - \left(ax^2  + \tfrac{2}{3}hx^3\right)\dfrac{1}{b^2},&
  L_o(x) &= -\dfrac{c^2}{2h}\left(x - \dfrac{a}{h}\log\left(\dfrac{a+hx}{a}\right)\right),
 \end{align*}
 respectively. The quadratic approximations of these energy functionals by using the Gramians, are:
\begin{align*}
\hL_c(x) &= \dfrac{x^2}{2P} \quad ~~ \text{with} \quad P = -\dfrac{-a - \sqrt{a^2 - h^2b^2}}{h^2},\\
\hL_o(x) &= \dfrac{Qx^2}{2}\quad \text{with} \quad Q = -\dfrac{c^2}{2a+h^2P},
\end{align*}
and  the approximations in terms of the truncated Gramians are:
\begin{align*}
\hL^{\scriptscriptstyle(\cT)}_c(x) &= \dfrac{x^2}{2P_\cT}\quad ~~ \text{with} \quad P_\cT = -\dfrac{h^2b^4+4a^2b^2}{8a^3},\\
\hL^{ {\scriptscriptstyle{(\cT)}} }_o(x) &= \dfrac{Q_\cT x^2}{2}\quad \text{with} \quad Q_\cT  = -\dfrac{h^2b^2c^2+4a^2c^2}{8a^3}.
\end{align*}
In order to compare these functionals, we set $a =-2,b=c=2$ and $h =1$ and plot the resulting energy functionals  in \Cref{fig:comparison_gram}.

\begin{figure}[!tb]
 \begin{subfigure}[h]{0.49\textwidth}
\centering
	\setlength\fheight{3cm}
	\setlength\fwidth{5.cm}
	\tikzsetnextfilename{Figures/Energy_QBDAE_cont}%
	% This file was created by matlab2tikz.
%
%The latest updates can be retrieved from
%  http://www.mathworks.com/matlabcentral/fileexchange/22022-matlab2tikz-matlab2tikz
%where you can also make suggestions and rate matlab2tikz.
%
\begin{tikzpicture}

\begin{axis}[%
width=\fwidth,
height=\fheight,
scale only axis,
separate axis lines,
every outer x axis line/.append style={black},
every x tick label/.append style={font=\color{black}},
xmin=-0.25,
xmax=0.25,
every outer y axis line/.append style={black},
every y tick label/.append style={font=\color{black}},
ymin=1e-9,
ymax=0.035,
axis background/.style={fill=white},
xlabel = {$x$},
legend style={at={(0.356,0.405)},anchor=south west,legend cell align=left,align=left,draw=black}
]
\addplot [color=red,solid]
  table[row sep=crcr]{%
-0.25	0.0338541666666667\\
-0.24	0.031104\\
-0.23	0.0284778333333333\\
-0.22	0.0259746666666667\\
-0.21	0.0235935\\
-0.2	0.0213333333333333\\
-0.19	0.0191931666666667\\
-0.18	0.017172\\
-0.17	0.0152688333333333\\
-0.16	0.0134826666666667\\
-0.15	0.0118125\\
-0.14	0.0102573333333333\\
-0.13	0.00881616666666667\\
-0.12	0.007488\\
-0.11	0.00627183333333333\\
-0.1	0.00516666666666667\\
-0.09	0.0041715\\
-0.08	0.00328533333333333\\
-0.07	0.00250716666666667\\
-0.06	0.001836\\
-0.05	0.00127083333333333\\
-0.04	0.000810666666666667\\
-0.03	0.0004545\\
-0.02	0.000201333333333333\\
-0.01	5.01666666666668e-005\\
0	-0\\
0.01	4.98333333333334e-005\\
0.02	0.000198666666666666\\
0.03	0.0004455\\
0.04	0.000789333333333334\\
0.05	0.00122916666666667\\
0.06	0.001764\\
0.07	0.00239283333333333\\
0.08	0.00311466666666667\\
0.09	0.0039285\\
0.1	0.00483333333333333\\
0.11	0.00582816666666667\\
0.12	0.006912\\
0.13	0.00808383333333333\\
0.14	0.00934266666666667\\
0.15	0.0106875\\
0.16	0.0121173333333333\\
0.17	0.0136311666666667\\
0.18	0.015228\\
0.19	0.0169068333333333\\
0.2	0.0186666666666667\\
0.21	0.0205065\\
0.22	0.0224253333333333\\
0.23	0.0244221666666667\\
0.24	0.026496\\
0.25	0.0286458333333333\\
};
\addlegendentry{$L_c(x)$};

\addplot [color=red,dotted]
  table[row sep=crcr]{%
-0.25	0.015625\\
-0.24	0.0144\\
-0.23	0.013225\\
-0.22	0.0121\\
-0.21	0.011025\\
-0.2	0.01\\
-0.19	0.009025\\
-0.18	0.0081\\
-0.17	0.007225\\
-0.16	0.0064\\
-0.15	0.005625\\
-0.14	0.0049\\
-0.13	0.004225\\
-0.12	0.0036\\
-0.11	0.003025\\
-0.1	0.0025\\
-0.09	0.002025\\
-0.08	0.0016\\
-0.07	0.001225\\
-0.06	0.0009\\
-0.05	0.000625\\
-0.04	0.0004\\
-0.03	0.000225\\
-0.02	9.99999999999999e-005\\
-0.01	2.5e-005\\
0	0\\
0.01	2.5e-005\\
0.02	9.99999999999999e-005\\
0.03	0.000225\\
0.04	0.0004\\
0.05	0.000625\\
0.06	0.0009\\
0.07	0.001225\\
0.08	0.0016\\
0.09	0.002025\\
0.1	0.0025\\
0.11	0.003025\\
0.12	0.0036\\
0.13	0.004225\\
0.14	0.0049\\
0.15	0.005625\\
0.16	0.0064\\
0.17	0.007225\\
0.18	0.0081\\
0.19	0.009025\\
0.2	0.01\\
0.21	0.011025\\
0.22	0.0121\\
0.23	0.013225\\
0.24	0.0144\\
0.25	0.015625\\
};
\addlegendentry{$\hL_c(x)$};

\addplot [color=red,dashed]
  table[row sep=crcr]{%
-0.25	0.025\\
-0.24	0.02304\\
-0.23	0.02116\\
-0.22	0.01936\\
-0.21	0.01764\\
-0.2	0.016\\
-0.19	0.01444\\
-0.18	0.01296\\
-0.17	0.01156\\
-0.16	0.01024\\
-0.15	0.009\\
-0.14	0.00784\\
-0.13	0.00676\\
-0.12	0.00576\\
-0.11	0.00484\\
-0.1	0.004\\
-0.09	0.00324\\
-0.08	0.00256\\
-0.07	0.00196\\
-0.06	0.00144\\
-0.05	0.001\\
-0.04	0.00064\\
-0.03	0.00036\\
-0.02	0.00016\\
-0.01	4.00000000000001e-005\\
0	0\\
0.01	4.00000000000001e-005\\
0.02	0.00016\\
0.03	0.00036\\
0.04	0.00064\\
0.05	0.001\\
0.06	0.00144\\
0.07	0.00196\\
0.08	0.00256\\
0.09	0.00324\\
0.1	0.004\\
0.11	0.00484\\
0.12	0.00576\\
0.13	0.00676\\
0.14	0.00784\\
0.15	0.009\\
0.16	0.01024\\
0.17	0.01156\\
0.18	0.01296\\
0.19	0.01444\\
0.2	0.016\\
0.21	0.01764\\
0.22	0.01936\\
0.23	0.02116\\
0.24	0.02304\\
0.25	0.025\\
};
\addlegendentry{$\hL^{\scriptscriptstyle(\cT)}_c(x)$};

\end{axis}
\end{tikzpicture}%%

	\caption{Comparison of the controllability energy functional  and its approximations.}
   \end{subfigure}
    \begin{subfigure}[h]{0.49\textwidth}
\centering
	\setlength\fheight{3cm}
	\setlength\fwidth{5.0cm}
	\tikzsetnextfilename{Figures/Energy_QBDAE_obser}%
	% This file was created by matlab2tikz.
%
%The latest updates can be retrieved from
%  http://www.mathworks.com/matlabcentral/fileexchange/22022-matlab2tikz-matlab2tikz
%where you can also make suggestions and rate matlab2tikz.
%
\begin{tikzpicture}

\begin{axis}[%
width=\fwidth,
height=\fheight,
scale only axis,
separate axis lines,
every outer x axis line/.append style={black},
every x tick label/.append style={font=\color{black}},
xmin=-0.25,
xmax=0.25,
every outer y axis line/.append style={black},
every y tick label/.append style={font=\color{black}},
ymin=1e-9,
ymax=0.07,
xlabel = {$x$},
axis background/.style={fill=white},
legend style={at={(0.356,0.405)},anchor=south west,legend cell align=left,align=left,draw=black}
]
\addplot [color=blue,solid]
  table[row sep=crcr]{%
-0.25	0.0288678573744662\\
-0.24	0.0266852587719869\\
-0.23	0.0245823803516717\\
-0.22	0.0225599387030286\\
-0.21	0.0206186601211355\\
-0.2	0.0187592807827003\\
-0.19	0.0169825469261435\\
-0.18	0.0152892150357904\\
-0.17	0.0136800520303086\\
-0.16	0.0121558354554864\\
-0.15	0.0107173536814957\\
-0.14	0.00936540610474057\\
-0.13	0.00810080335444646\\
-0.12	0.0069243675040967\\
-0.11	0.00583693228788093\\
-0.1	0.00483934332227182\\
-0.09	0.00393245833290296\\
-0.08	0.00311714738687466\\
-0.07	0.00239429313067074\\
-0.06	0.00176479103382228\\
-0.05	0.00122954963851432\\
-0.04	0.000789490815281096\\
-0.03	0.000445550024997764\\
-0.02	0.000198676587327612\\
-0.01	4.9833955844146e-005\\
0	-0\\
0.01	5.01672941771274e-005\\
0.02	0.000201343414005822\\
0.03	0.000454551240192738\\
0.04	0.000810829270077848\\
0.05	0.00127123193715961\\
0.06	0.0018368299388343\\
0.07	0.00250871057260463\\
0.08	0.00328797808102069\\
0.09	0.00417575400562739\\
0.1	0.0051731775502023\\
0.11	0.00628140595357696\\
0.12	0.00750161487235013\\
0.13	0.00883499877379995\\
0.14	0.010282771339342\\
0.15	0.0118461658788472\\
0.16	0.013526435756204\\
0.17	0.0153248548264628\\
0.18	0.0172427178849652\\
0.19	0.0192813411288436\\
0.2	0.0214420626313051\\
0.21	0.0237262428291267\\
0.22	0.026135265023806\\
0.23	0.0286705358968301\\
0.24	0.0313334860395396\\
0.25	0.0341255704980905\\
};
\addlegendentry{$L_o(x)$};

\addplot [color=blue,dotted]
  table[row sep=crcr]{%
-0.25	0.0625\\
-0.24	0.0576\\
-0.23	0.0529\\
-0.22	0.0484\\
-0.21	0.0441\\
-0.2	0.04\\
-0.19	0.0361\\
-0.18	0.0324\\
-0.17	0.0289\\
-0.16	0.0256\\
-0.15	0.0225\\
-0.14	0.0196\\
-0.13	0.0169\\
-0.12	0.0144\\
-0.11	0.0121\\
-0.1	0.01\\
-0.09	0.0081\\
-0.08	0.0064\\
-0.07	0.0049\\
-0.06	0.0036\\
-0.05	0.0025\\
-0.04	0.0016\\
-0.03	0.0009\\
-0.02	0.0004\\
-0.01	0.0001\\
0	0\\
0.01	0.0001\\
0.02	0.0004\\
0.03	0.0009\\
0.04	0.0016\\
0.05	0.0025\\
0.06	0.0036\\
0.07	0.0049\\
0.08	0.0064\\
0.09	0.0081\\
0.1	0.01\\
0.11	0.0121\\
0.12	0.0144\\
0.13	0.0169\\
0.14	0.0196\\
0.15	0.0225\\
0.16	0.0256\\
0.17	0.0289\\
0.18	0.0324\\
0.19	0.0361\\
0.2	0.04\\
0.21	0.0441\\
0.22	0.0484\\
0.23	0.0529\\
0.24	0.0576\\
0.25	0.0625\\
};
\addlegendentry{$\hL_o(x)$};

\addplot [color=blue,dashed]
  table[row sep=crcr]{%
-0.25	0.0390625\\
-0.24	0.036\\
-0.23	0.0330625\\
-0.22	0.03025\\
-0.21	0.0275625\\
-0.2	0.025\\
-0.19	0.0225625\\
-0.18	0.02025\\
-0.17	0.0180625\\
-0.16	0.016\\
-0.15	0.0140625\\
-0.14	0.01225\\
-0.13	0.0105625\\
-0.12	0.009\\
-0.11	0.0075625\\
-0.1	0.00625\\
-0.09	0.0050625\\
-0.08	0.004\\
-0.07	0.0030625\\
-0.06	0.00225\\
-0.05	0.0015625\\
-0.04	0.001\\
-0.03	0.0005625\\
-0.02	0.00025\\
-0.01	6.25000000000001e-005\\
0	0\\
0.01	6.25000000000001e-005\\
0.02	0.00025\\
0.03	0.0005625\\
0.04	0.001\\
0.05	0.0015625\\
0.06	0.00225\\
0.07	0.0030625\\
0.08	0.004\\
0.09	0.0050625\\
0.1	0.00625\\
0.11	0.0075625\\
0.12	0.009\\
0.13	0.0105625\\
0.14	0.01225\\
0.15	0.0140625\\
0.16	0.016\\
0.17	0.0180625\\
0.18	0.02025\\
0.19	0.0225625\\
0.2	0.025\\
0.21	0.0275625\\
0.22	0.03025\\
0.23	0.0330625\\
0.24	0.036\\
0.25	0.0390625\\
};
\addlegendentry{$\hL^{\scriptscriptstyle(\cT)}_o(x)$};

\end{axis}
\end{tikzpicture}%%

	\caption{Comparison of the observability energy functional and its approximations.}
   \end{subfigure}
   \caption{Comparison of exact energy functionals with approximated energy functionals via the Gramians and truncated Gramians.}
   \label{fig:comparison_gram}
\end{figure}
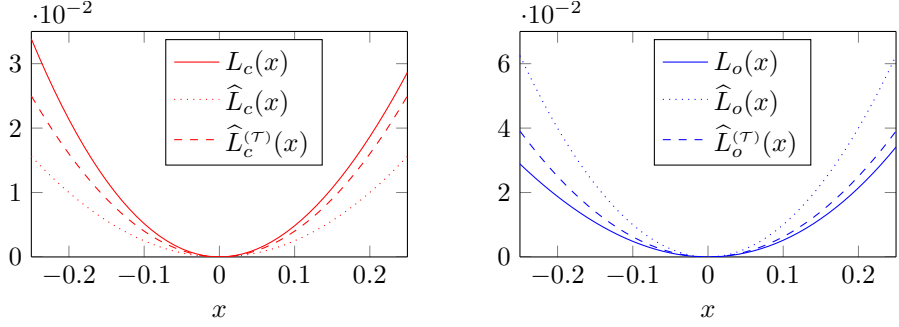

Clearly, \Cref{fig:comparison_gram} illustrates the lower and upper bounds for the controllability and observability energy functionals, respectively at least locally. Moreover, we observe that the bounds for the energy functionals, given in terms of truncated Gramians are  closer to the actual energy functionals of the system in the small neighborhood of the origin. 
 \end{example}
%\subsection{Computation of reduced-order systems based on the Gramians}

So far, we have shown the bounds for the energy functionals in terms of the Gramians of the QB system. In order to prove those bounds, it is assumed that $P$ is a  positive definite. However, this assumption might not be fulfilled for many QB systems, especially arising from  semi-discretization of nonlinear PDEs. Therefore, our next objective is to  provide another interpretation of the proposed Gramians and truncated Gramians, that is, the connection of Gramians and truncated Gramians with reachability and observability of the system. 
 For the observability energy functional, we consider the output $y$ of the following \emph{homogeneous}  QB system:
\begin{equation}\label{eq:homo_qbdae}
\begin{aligned}
\dot{x}(t)&= Ax + Hx(t)\otimes x(t) + \sum_{k=1}^mN_kx(t)u_k(t),\\
y(t) &= Cx(t),\qquad x(0) = x_0,
\end{aligned}
\end{equation}
as considered for bilinear systems in~\cite{morBenD11,enefungray98}. However, it might also be possible to consider an \emph{inhomogeneous} system by setting the control input $u$ completely zero, as shown in~\cite{morSch93}. We first investigate how the proposed Gramians are related to reachability and observability of the QB systems, analogues to derivation for bilinear systems  in \cite{morBenD11}.
\def\im {\ensuremath{{\mathsf{Im}}}}
\def\ker {\ensuremath{{\mathsf{Ker}}}}
\def\rank {\ensuremath{{\mathsf{rank}}}}
\begin{theorem}\label{thm:kerPQ_dyn}
	\
	\begin{enumerate}[label=(\alph*)]
		\item Consider the QB system~\eqref{eq:Quad_bilin_Sys}, and assume the reachability Gramian $P$ to be the solution of~\eqref{eq:cont_lyap}. If the system is steered from $0$ to $x_0$, where $x_0 \not\in \im P$, then $L_c(x_0) = \infty$ for all input functions $u$.
		\item Furthermore,  consider the \emph{homogeneous} QB system~\eqref{eq:homo_qbdae} and assume $P>0$ and $Q$ to be the reachability and observability Gramians of the QB system which are  solutions of~\eqref{eq:cont_lyap} and~\eqref{eq:obser_lyap}, respectively. If the initial state satisfies $x_0 \in  \ker Q$, then $L_o(x_0) = 0$.
	\end{enumerate}
\end{theorem}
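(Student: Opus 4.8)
The plan is to treat part~(a) through the Volterra-series construction of $P$ and part~(b) by extracting pointwise consequences of the observability Lyapunov equation~\eqref{eq:obser_lyap}; both arguments parallel the bilinear case in~\cite{morBenD11}.

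For~(a), the crucial structural fact is that $\im\bar P_i(t_1,\ldots,t_i)\subseteq\im P$ for every $i\ge 1$ and all $t_1,\ldots,t_i\ge 0$. This follows in two steps: since $P_i=\int\bar P_i\bar P_i^T\ge 0$, any $v\in\ker P_i$ forces $\bar P_i(t_1,\ldots,t_i)^Tv\equiv 0$, so $\im\bar P_i\subseteq(\ker P_i)^\perp=\im P_i$; and since $P=\sum_{j\ge 1}P_j$ with each $P_j\ge 0$, we have $\ker P=\bigcap_j\ker P_j\subseteq\ker P_i$, whence $\im P_i\subseteq\im P$. Now, if some $u\in L_2^m(-\infty,0]$ steered the system from $0$ to $x_0$, then expanding $x_0=x(0)$ in the Volterra series of~\eqref{eq:Quad_bilin_Sys} realizes $x_0$ as $\sum_i x_i(0)$ with each $x_i(0)$ an integral of $\bar P_i$ against products of components of $u$; hence $x_i(0)\in\im P_i\subseteq\im P$, and because $\im P$ is a (closed) subspace the convergent sum again lies in $\im P$. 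Contrapositively, $x_0\notin\im P$ rules out any finite-energy steering input, so $L_c(x_0)=\infty$.

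For~(b), I would first record what $Qx_0=0$ implies. Sandwiching~\eqref{eq:obser_lyap} between $x_0^T$ and $x_0$ annihilates the $A$-terms and leaves a sum of nonnegative quadratic terms equal to zero (here $P\otimes Q\ge 0$ since $P,Q\ge 0$); therefore $Cx_0=0$, $QN_kx_0=0$ for all $k$, and $(P\otimes Q)(\cH^{(2)})^Tx_0=0$. Because $P>0$, the last equality gives $(I_n\otimes Q)(\cH^{(2)})^Tx_0=0$, and the matricization identities~\eqref{eq:tensor_matricization} together with the symmetry of $H$ then yield $QH(x_0\otimes I)=QH(I\otimes x_0)=0$ --- precisely the implication already exploited in the proof of \Cref{thm:obs:bound}. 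Substituting these back into~\eqref{eq:obser_lyap} evaluated at $x_0$ (not sandwiched) removes every remaining term except $QAx_0$, so $QAx_0=0$ as well. Thus for $x_0\in\ker Q$ one has $Ax_0\in\ker Q$, $N_kx_0\in\ker Q$, $H(x_0\otimes x_0)\in\ker Q$, and $Cx_0=0$.

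The final step is an invariance argument for the homogeneous system~\eqref{eq:homo_qbdae}: the above shows that at every $x\in\ker Q$ the vector field $Ax+H(x\otimes x)+\sum_kN_kx\,u_k(t)$ lies in $\ker Q$, irrespective of $u$. Writing the state in a basis adapted to $\ker Q\oplus(\ker Q)^\perp$, its $(\ker Q)^\perp$-component obeys an ODE whose right-hand side vanishes identically on $\ker Q$, so by uniqueness for the (locally Lipschitz) QB system the trajectory started at $x_0\in\ker Q$ stays in $\ker Q$ for all $t\ge 0$; since $C$ vanishes on $\ker Q$, $y(t)=Cx(t)\equiv 0$ and hence $L_o(x_0)=\tfrac12\int_0^\infty\|y(t)\|^2dt=0$ (and, as this holds for every admissible input, the value is unchanged whether $L_o$ is read through \Cref{def:obser1} or \Cref{def:obser3}). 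I expect the only genuinely delicate point to lie in~(a): justifying that a finite-energy reaching input admits a convergent Volterra expansion of $x_0$, so that the limit remains in $\im P$ --- this is exactly where the standing assumption that the Gramian series converge is used. In~(b) the chain of implications from~\eqref{eq:obser_lyap} is essentially forced, the only bookkeeping being the tensor identity that requires $\cH^{(2)}=\cH^{(3)}$, i.e.\ $H$ symmetric.
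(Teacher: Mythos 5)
Your part (b) is essentially the paper's own argument: from \eqref{eq:obser_lyap} sandwiched with $x_0\in\ker Q$ you extract $Cx_0=0$, $QN_kx_0=0$ and $(I\otimes Q)(\cH^{(2)})^Tx_0=0$ (using $P>0$), deduce $QAx_0=0$, conclude that the homogeneous vector field maps $\ker Q$ into itself for every input, and invoke invariance of $\ker Q$ along trajectories so that $y\equiv 0$. That chain is correct and coincides with the paper's proof, which phrases the same invariance as $\tilde v^T\dot x(t)=0$ for $\tilde v\in\mathsf{Im}\,Q$.

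Part (a), however, departs from the paper and has a genuine gap. You realize the reached state as a convergent sum $x_0=\sum_i x_i(0)$ of its Volterra terms and push each term into $\mathsf{Im}\,P$ via $\mathsf{Im}\,\bar P_i\subseteq\mathsf{Im}\,P_i\subseteq\mathsf{Im}\,P$. The inclusion chain is fine, but the Volterra expansion of the state of a quadratic-bilinear system is only a formal/local representation: for a general finite-energy input $u$ on $(-\infty,0]$ there is no guarantee that the series converges to the actual trajectory (QB dynamics can even blow up in finite time), and convergence of the Gramian series $P=\sum_i P_i$ --- the paper's standing assumption, which you cite as covering this step --- does not imply convergence of the state expansion for a particular $u$; these are two different convergence questions. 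The paper avoids the issue entirely: sandwiching \eqref{eq:cont_lyap} with $v\in\ker P$ gives $B^Tv=0$, $PN_k^Tv=0$ and $(P\otimes P)H^Tv=0$, hence $PA^Tv=0$ from the equation itself; then for any trajectory with $x(t)\in\mathsf{Im}\,P$ one computes $\dot x(t)^Tv=0$ directly from the exact differential equation, so $\mathsf{Im}\,P$ is invariant under the flow, the trajectory started at $0\in\mathsf{Im}\,P$ never leaves $\mathsf{Im}\,P$, and any $x_0\notin\mathsf{Im}\,P$ is unreachable. Replacing your Volterra argument by this algebraic invariance argument (the exact mirror of what you already did in part (b)) closes part (a).
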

\begin{proof}\
\begin{enumerate}[label=(\alph*)]
	\item By assumption, $P$ satisfies
\begin{equation}\label{eq:CG}
AP + PA^T + H(P\otimes P)H^T +\sum_{k=1}^mN_kPN_k^T + BB^T =0.
\end{equation}
Next, we consider a vector  $v \in \ker P$ and multiply the above equation from the left and right with $v^T$ and $v$, respectively to obtain
\begin{align*}
0&= v^TAPv + v^TPA^Tv + v^TH(P\otimes P)H^Tv +\sum_{k=1}^mv^TN_kPN_k^Tv + v^TBB^Tv\\
&=v^TH(P\otimes P)H^Tv +\sum_{k=1}^mv^TN_kPN_k^Tv + v^TBB^Tv.
\end{align*}
This implies $B^Tv = 0$, $PN_k^T v=0$ and $(P\otimes P)H^T v=0$. From~\eqref{eq:CG}, we  thus obtain $PA^Tv = 0$. Now we consider an arbitrary state vector $x(t)$, which is the solution of \eqref{eq:Quad_bilin_Sys} at time $t$ for any given input function $u$. If $x(t) \in \im P $ for some $t$, then we have
\begin{equation*}
\dot{x}(t)^Tv = x(t)^TA^Tv + \left(x(t)\otimes x(t)\right)^TH^Tv + \sum_{k=1}^mu_k(t)x(t)^TN_k^Tv + u(t)B^Tv = 0.
\end{equation*}
The above relation indicates that $\dot x(t) \perp v$ if $v\in \ker P$ and $x(t) \in \im P$.  It shows that \im P is invariant under the dynamics of the system. Since the initial condition $0$ lies in $\im P$,  $x(t) \in \im P$ for all $t\geq 0$. This reveals that if the final state $x_0 \not\in \im P$, then it cannot be reached from $0$; hence, $L_c(x_0) =\infty$.
\item Following the above discussion, we can show that $(I\otimes Q )  \left(\cH^{(2)}\right)^T \ker Q = 0$, $QN_k\ker Q =0$, $QA\ker Q = 0$, and $C\ker Q =0$.  Let $x(t)$ denote the solution of the homogeneous system at time $t$. If $x(t)\in \ker Q$ and a vector $\tv \in \im Q$, then we have
\begin{align*}
\tv^T\dot x(t)&= \underbrace{\tv Ax(t) }_{=0}+ \tv^TH(x(t)\otimes x(t)))+ \sum_{k=1}^m\underbrace{\tv^TN_kx(t)u_k(t)}_{=0}\\
&= x(t)^T\cH^{(2)}(x(t)\otimes \tv) = \underbrace{x(t)^T\cH^{(2)}(I\otimes \tv)}_{=0}x(t) = 0.
\end{align*}
This implies that if $x(t) \in  \ker Q$, then $\dot x(t)  \in\ker Q$.  Therefore, if the initial condition $x_0 \in   \ker Q$, then $x(t)\in \ker Q$ for all $t\geq 0$, resulting in $y(t)=C\underbrace{x(t)}_{\in \ker Q} = 0$; hence, $L_o(x_0) = 0$.
\end{enumerate}
\end{proof}

The above theorem suggests that the state components, belonging to \ker P or \ker Q, do not play a major role as far as the system dynamics are concerned. This shows that the states which belong to \ker P,  are uncontrollable, and similarly, the states, lying in \ker Q are unobservable once the uncontrollable states are removed. Furthermore, we have shown in \Cref{thm:obs:bound,thm:con_bound} the lower and upper bounds for the controllability and observability energy functions in the quadratic form of the Gramians $P$ and $Q$ of QB systems (at least in the neighborhood of the origin). This coincides with the concept of balanced truncation model reduction which aims at eliminating weakly controllable and weakly observable state components. Such states are corresponding to zero or small singular values of $P$ and $Q$. In order to find these states simultaneously, we utilize the balancing tools similar to the linear case; see, e.g.,~\cite{moral1994,morAnt05}. For this, one needs to determine the Cholesky factors of the Gramians as  $P =: S^TS$  and $Q =: R^TR$, and compute the SVD of $ SR^T =: U\Sigma V^T $, resulting in a transformation matrix $T = S^TU\Sigma^{-\tfrac{1}{2}}$. Using the matrix $T$, we obtain an equivalent QB system
\begin{equation}\label{sys:tran_QBDAE}
\begin{aligned}
\dot{\tx}(t) &= \tA \tx(t) + \tH \tx(t)\otimes \tx(t) + \sum_{k=1}^m\tN_k \tx(t)u_k(t) + \tB u(t),\\
y(t) &= \tC \tx(t),\quad \tx(0) = 0
\end{aligned}
\end{equation}
with $$\tA = T^{-1}AT,\quad\tH = T^{-1}H(T\otimes T),\quad\tN_k = T^{-1}N_kT,\quad\tB = T^{-1}B,\quad\tC = CT.$$
Then, the above transformed system~\eqref{sys:tran_QBDAE} is a balanced system, as the Gramians $\tP$ and $\tQ$ of the system~\eqref{sys:tran_QBDAE} are equal and diagonal, i.e., $\tP = \tQ = \mbox{diag}(\sigma_1,\sigma_2,\ldots,\sigma_n)$. The attractiveness of the balanced system is that it allows us to find  state components corresponding to small singular values of both $\tP$ and $\tQ$. If $\sigma_{\hn} > \sigma_{\hn+1}$, for some $\hn\in\N$, then it is easy to see that states related to $\{\sigma_{\hn+1},\ldots,\sigma_n\}$ are not only hard to control but also hard to observe; hence, they can be eliminated. In order to determine a reduced system of order $\hn$, we partition $T = \bbm T_1 &T_2\ebm$ and $T^{-1} = \bbm S_1^T & S_2^T\ebm^T$, where $T_1,S_1^T \in \R^{n\times \hn}$, and define the reduced-order system's realization as follows:
\begin{equation}\label{eq:red_realization}
\hA = S_1AT_1,~~\hH = S_1H(T_1\otimes T_1),~~\hN_k = S_1N_kT_1,~~\hB = S_1B,~~\hC = CT_1,
\end{equation}
which is generally a locally good approximate of the original system; though it is not a straightforward task to estimate the error occurring due to the truncation of the QB system unlike in the case of linear systems.   

 Based on the above discussions, we propose the following corollary, showing how the truncated Gramians of a QB system relate to reachability and observability of the system.
\begin{corollary}\label{coro:gram_trc_re}
\	\begin{enumerate}[label=(\alph*)]
		\item  Consider the QB system~\eqref{eq:Quad_bilin_Sys}, and let $P_\cT$ and $Q_\cT $ be the truncated Gramians of the system, which are solutions of the Lyapunov equations as in~\eqref{eq:tru_gramians}. If the system is steered from $0$ to $x_0$ where, $x_0 \not\in \im P_\cT$, then $L_c(x_0) = \infty$ for all input functions $u$. 
		\item Assume the QB system~\eqref{eq:Quad_bilin_Sys} is locally controllable around the origin, i.e., $(A,B)$ is controllable.  Then, for  the \emph{homogeneous} QB system~\eqref{eq:homo_qbdae}, if  the initial state $x_0 \in  \ker Q_\cT$, then $L_o(x_0) = 0$.
\end{enumerate}
\end{corollary}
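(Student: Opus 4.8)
The plan is to follow the same line of reasoning as in the proof of \Cref{thm:kerPQ_dyn}, but now starting from the Lyapunov equations \eqref{eq:tru_gramians} characterizing $P_\cT$ and $Q_\cT$ (together with the linear equations \eqref{eq:linear_CG}--\eqref{eq:linear_OG} for $\hat P_1$ and $\hat Q_1$) instead of from \eqref{eq:cont_lyap} and \eqref{eq:obser_lyap}.

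\emph{Part (a).} First I would take $v\in\ker P_\cT$ and sandwich the first equation of \eqref{eq:tru_gramians} between $v^T$ and $v$. Since $A$ is stable, $\hat P_1\geq 0$ (it solves \eqref{eq:linear_CG}), so $\hat P_1\otimes\hat P_1\geq 0$, and the three terms $v^TH(\hat P_1\otimes\hat P_1)H^Tv$, $\sum_k v^TN_k\hat P_1N_k^Tv$ and $v^TBB^Tv$ are individually nonnegative and hence each vanish, giving $B^Tv=0$, $\hat P_1N_k^Tv=0$ and $(\hat P_1\otimes\hat P_1)H^Tv=0$. Multiplying the same equation on the right by $v$ and using $P_\cT v=0$ then yields $P_\cT A^Tv=0$, i.e.\ $A^T(\ker P_\cT)\subseteq\ker P_\cT$, equivalently $A(\im P_\cT)\subseteq\im P_\cT$; and $B^Tv=0$ for all $v\in\ker P_\cT$ means $\im B\subseteq\im P_\cT$. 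Exactly as in \Cref{thm:kerPQ_dyn}(a), I would then argue that $\im P_\cT$ is invariant under the flow of \eqref{eq:Quad_bilin_Sys}: for a trajectory with $x(t)\in\im P_\cT$ and any $v\in\ker P_\cT$, the identity $\dot x(t)^Tv=x(t)^TA^Tv+(x(t)\otimes x(t))^TH^Tv+\sum_k u_k(t)x(t)^TN_k^Tv+u(t)^TB^Tv$ should collapse to $0$. Since the trajectory of \eqref{eq:Quad_bilin_Sys} starts at $0\in\im P_\cT$, it stays in $\im P_\cT$ for all $t\ge 0$, hence any target $x_0\notin\im P_\cT$ cannot be reached from $0$ by any input, the feasible set in the minimization defining $L_c(x_0)$ is empty, and therefore $L_c(x_0)=\infty$.

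\emph{Part (b).} Here the extra hypothesis pays off: controllability of $(A,B)$ together with stability of $A$ forces $\hat P_1> 0$. Taking $v\in\ker Q_\cT$ and sandwiching \eqref{eq:tru_obs_Gram} between $v^T$ and $v$, nonnegativity of $v^T\cH^{(2)}(\hat P_1\otimes\hat Q_1)(\cH^{(2)})^Tv$, $\sum_k v^TN_k^T\hat Q_1 N_k v$ and $v^TC^TCv$ gives $Cv=0$, $\hat Q_1N_kv=0$ and $(\hat P_1\otimes\hat Q_1)(\cH^{(2)})^Tv=0$; invertibility of $\hat P_1$ turns the last relation into $(I\otimes\hat Q_1)(\cH^{(2)})^Tv=0$, and then the tensor identities \eqref{eq:tensor_matricization} yield $\hat Q_1H(v\otimes I)=\hat Q_1H(I\otimes v)=0$, precisely as in the proof of \Cref{thm:kerPQ_dyn}(b). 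Multiplying \eqref{eq:tru_obs_Gram} on the right by $v$ also gives $Q_\cT Av=0$, i.e.\ $A(\ker Q_\cT)\subseteq\ker Q_\cT$. I would then show $\ker Q_\cT$ is invariant under the \emph{homogeneous} dynamics \eqref{eq:homo_qbdae}: if $x(t)\in\ker Q_\cT$, testing $\dot x(t)=Ax+H(x\otimes x)+\sum_kN_kx\,u_k$ against an arbitrary $w\in\im Q_\cT$ and rewriting $w^TH(x\otimes x)=x^T\cH^{(2)}(I\otimes w)x$ via \eqref{eq:tensor_matricization} makes every term vanish by the relations just derived. Consequently $x_0\in\ker Q_\cT$ implies $x(t)\in\ker Q_\cT$ for all $t\ge 0$, so $y(t)=Cx(t)=0$ (as $C(\ker Q_\cT)=0$), and hence $L_o(x_0)=0$.

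The step I expect to be the main obstacle is establishing invariance of $\im P_\cT$ (respectively $\ker Q_\cT$) under the nonlinear flow. Unlike in \Cref{thm:kerPQ_dyn}, the quadratic and bilinear terms of \eqref{eq:tru_gramians} are sandwiched by $\hat P_1$ (and by $\hat P_1,\hat Q_1$) rather than by $P_\cT$ (and $Q_\cT$) themselves, so sandwiching the Lyapunov equations only produces the weaker conditions $\hat P_1N_k^Tv=0$ and $(\hat P_1\otimes\hat P_1)H^Tv=0$ instead of $P_\cT N_k^Tv=0$ and $(P_\cT\otimes P_\cT)H^Tv=0$ that drove the invariance argument before. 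To close this gap I would exploit $P_\cT\geq\hat P_1\geq0$ (so $\ker P_\cT\subseteq\ker\hat P_1$) together with the explicit integral representations $\hat P_2=\int_0^\infty e^{At}\big(\sum_kN_k\hat P_1N_k^T\big)e^{A^Tt}\,dt$ and $\hat P_3=\int_0^\infty e^{At}H(\hat P_1\otimes\hat P_1)H^Te^{A^Tt}\,dt$ (and their observability analogues), propagating the kernel conditions from $\hat P_1$ through $\hat P_2$ and $\hat P_3$ up to $P_\cT$; in part (b) invertibility of $\hat P_1$ is exactly what removes this difficulty on the controllability side, which is why that hypothesis is imposed there.
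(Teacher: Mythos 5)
Your overall route is the same as the paper's: the paper disposes of this corollary in one sentence, saying it follows ``along the lines of the proof for \Cref{thm:kerPQ_dyn}'' once one notes $\ker P_\cT \subseteq \ker \hP_1$ and $\ker Q_\cT \subseteq \ker \hQ_1$, and you carry out exactly that program. To your credit, you have also put your finger on the genuine weak point. Sandwiching \eqref{eq:tru_gramians} with $v \in \ker P_\cT$ only yields $B^Tv = 0$, $\hP_1 N_k^T v = 0$ and $(\hP_1 \otimes \hP_1)H^Tv = 0$, i.e.\ $N_k^T v \in \ker \hP_1$ and $H^Tv \perp \im \hP_1 \otimes \im \hP_1$, whereas the flow-invariance of $\im P_\cT$ used in \Cref{thm:kerPQ_dyn}(a) requires $N_k^T v \in \ker P_\cT$ and $H^Tv \perp \im P_\cT \otimes \im P_\cT$. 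Since $\ker P_\cT \subseteq \ker \hP_1$ (equivalently $\im \hP_1 \subseteq \im P_\cT$), the derived conditions are strictly weaker than what the invariance argument needs: the inclusion the paper invokes runs in the wrong direction for precisely this step. Your proposed repair --- propagating kernel conditions through the integral representations of $\hP_2$ and $\hP_3$ --- does not close the gap: those representations show that $\im P_\cT$ \emph{contains} $N_k\im \hP_1$ and $H(\im \hP_1\otimes\im \hP_1)$, but say nothing about where $N_k$ and $H$ send the remaining directions of $\im P_\cT$ contributed by $\hP_2$ and $\hP_3$. A three-dimensional bilinear instance makes the failure concrete: with $A=-I$, $B=e_1$, $H=0$ and $N_1 = e_2e_1^T+e_3e_2^T$ one gets $\im P_\cT = \mathrm{span}\{e_1,e_2\}$, yet trajectories from the origin acquire a nonzero $e_3$-component because $N_1 e_2 = e_3 \notin \im P_\cT$. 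So the invariance of $\im P_\cT$ is not a consequence of \eqref{eq:tru_gramians} alone, and neither your sketch nor the paper's remark supplies the missing ingredient.

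Your claim that part (b) escapes this difficulty is not right either. Positivity of $\hP_1$ only removes the $P$-factor from $(\hP_1\otimes\hQ_1)(\cH^{(2)})^Tv = 0$; the relations you obtain are still $\hQ_1 N_k v = 0$ and $\hQ_1 H(v\otimes I) = \hQ_1 H(I \otimes v) = 0$, i.e.\ statements about $\ker \hQ_1 \supseteq \ker Q_\cT$, while invariance of $\ker Q_\cT$ under the homogeneous flow needs $N_k(\ker Q_\cT)\subseteq\ker Q_\cT$ and the analogous condition for the quadratic term --- again the wrong inclusion. Hence ``every term vanishes by the relations just derived'' does not hold as written (a dual version of the example above exhibits the failure). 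In short: you have reproduced the paper's intended argument faithfully and correctly located where it is incomplete, but the gap is real, it affects both parts equally, and the fix you sketch does not repair it.
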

The above corollary can be proven, along of the lines of the proof for \Cref{thm:kerPQ_dyn}, keeping in mind that if $\gamma~\in~\ker P_\cT$, then $\gamma$ also belongs  to $\ker P_1$, where $P_1$ is the solution to \eqref{eq:linear_CG}. Similarly, if $\xi\in \ker Q_\cT $, then $\xi$ also lies in $\ker Q_1$, where $Q_1$ is the solution to \eqref{eq:linear_OG}. This can easily be verified using simple linear algebra. Having noted this, \Cref{coro:gram_trc_re} also suggests that $\ker P_\cT$ is uncontrollable, and $\ker Q_\cT$ is also unobservable if the system is locally controllable.   Moreover, these truncated Gramians also bound the energy functions for QB systems in the quadratic form, see \Cref{coro:tru_energy}. Based on these, we conclude that the truncated Gramians are also a good candidate to use for balancing the system and to compute  the reduced-order systems.

%the truncated Gramians $P_\cT$ and $Q_\cT $ can also be used to determine reduced-order system due to the same reason explained for the Gramians $P$ and $Q$. But, in the next section, we present advantages of considering truncated Gramians $P_\cT$ and $Q_\cT $ over Gramians $P$ and $Q$ in the MOR framework.
\section{Computational Issues and Advantages of Truncated Gramians}\label{sec:computational} Up to now, we have proposed the Gramians for the QB systems and showed their relations to energy functionals of the system which allows us to determine the reduced-order systems. Here, we discuss computational issues and the advantages of considering this truncated Gramians in the MOR framework. Towards this end, we address stability issues of the reduced-order systems, obtained by using the truncated Gramians. 
\subsection{Computational issues}
One of the major concerns in applying balanced truncation MOR is that it requires the solutions of  two Lyapunov equations~\eqref{eq:cont_lyap} and \eqref{eq:obser_lyap}. These equations are quadratic in nature, which are not trivial to solve, and they appear to be computationally expensive. So far, it is not clear  how to solve these generalized quadratic  Lyapunov equation efficiently; however, under some assumptions,  a fix point iteration scheme can be employed, which is based on the theory of convergent splitting presented in~\cite{damm2001newton,schneider1965positive}. This has been studied for solving generalized Lyapunov equation for bilinear systems in~\cite{damm2008direct}, wherein the proposed stationary method is as follows:
\begin{equation}\label{eq:bilinear_iter}
\cL(X_i) = \cN(X_{i-1}) - BB^T, \qquad i = 1,2,\ldots,
\end{equation}
with $\cL(X) = AX + XA^T$ and $\cN(X_i) = -\sum_{k=1}^mN_kX_iN_k^T$. To perform this stationary iteration, we require the solution of a conventional Lyapunov equation at each iteration. Assuming $\sigma(A) \subset \C^{-}$ and spectral radius of $\cL^{-1}\cN<1$, the iteration \eqref{eq:bilinear_iter} linearly converges to a positive semidefinite solution $X$ of the generalized Lyapunov equation for bilinear systems, which is 
$$AX + XA^T + \sum_{k=1}^mN_kXN_k^T+BB^T = 0.$$
Later on, the efficiency of this iterative method was improved in~\cite{shank2014efficient} by utilizing  tools for inexact solution of $Ax = b$. The main idea was to determine a low-rank factor of $\cN(X_{i-1}) - BB^T$ by truncating the columns, corresponding to small singular values and to increase the accuracy of the low-rank solution of the linear Lyapunov equation~\eqref{eq:bilinear_iter}  with each iteration. In total, this results in an efficient method to determine a low-rank solution of the generalized Lyapunov equation for bilinear systems with the desired tolerance. For detailed insights, we refer to~\cite{shank2014efficient}.

One can utilize the same tools to determine  the solutions of generalized quadratic-type Lyapunov equations. We begin with the inexact form equation, which on convergence gives the reachability Gramian; this is,
\begin{equation}
\label{eq:quad_iter}
\cL(X_i) = \Pi(X_{i-1}) - BB^T,\quad i = 1,2,\ldots
\end{equation}
where $\cL(X) = AX + XA^T$ and $\Pi(X) = -H(X\otimes X)H^T-\sum_{k=1}^mN_kXN_k^T$. Similar to the bilinear case, if $\sigma(A) \subset \C^{-}$ and the spectral radius of $\cL^{-1}\Pi<1$, then the iteration \eqref{eq:quad_iter} converges to a positive semidefinite solution  of the generalized quadratic Lyapunov equation. Next, we determine a low-rank approximation of $\Pi(X) = -H(X\otimes X)H^T-\sum_{k=1}^mN_kXN_k^T$. For this, let us assume a low-rank product $X := FDF^T$, where $F \in \Rnk$ and a QR decomposition of $F := Q_FR_F$. We then perform an eigenvalue decomposition of the relatively small matrix $R_FDR_F^T := U\Sigma U^T$, where $\Sigma = \diag{\sigma_1,\ldots,\sigma_k}$ with $\sigma_j \geq \sigma_{j+1}$.  Assuming there exists a scalar $\beta$ such that
 $$\sqrt{\sigma_{\beta+1}^2+\cdots +\sigma_k^2} \leq \tau \sqrt{\sigma_1^2+\cdots +\sigma_k^2},$$
where $\tau$ is a given tolerance, this gives us a low-rank approximation of $X$ as:
$$X \approx \tF\tD\tF^T,$$
where $\tF = Q_F \tU$ and $\tD = \diag{\sigma_1,\ldots,\sigma_\beta}$. Following the short-hand notation, we denote $\tZ = \cT_\tau (Z)$ which gives the low-rank approximation of $ZZ^T$ with the tolerance $\tau$, i.e., $ZZ^T \approx \tZ\tZ^T$. Considering a low-rank factor of $X_{k-1} \approx Z_{k-1}Z_{k-1}^T$,  the right side of~\eqref{eq:quad_iter}
\begin{multline*}
\Pi(X_{k-1}) - BB^T \approx -[H(Z_{k-1} \otimes Z_{k-1}), \left[N_1,\ldots,N_m\right]Z_{k-1}, B ]\\
\hspace{2cm}\times[H(Z_{k-1} \otimes Z_{k-1}), \left[N_1,\ldots,N_m\right]Z_{k-1}, B ]^T
\end{multline*}
can be replaced with its truncated version $\cT_\tau (\Pi(X_{k-1}) - BB^T) =: -\F_k\F_k^T$ with the desired tolerance. This indicates that we now need to solve the following linear Lyapunov equation at each step:
\begin{equation}\label{eq:quad_ite_approx}
AX_k + X_kA = -\F_k\F_k^T,
\end{equation}
 which can be solved very efficiently by using any of the recently developed low-rank solvers for Lyapunov equations; see, e.g.,~\cite{benner2013numerical,simoncini2013computational}.  
 %In this paper, we determine the low-rank factor of the Lyapunov equation~\eqref{eq:quad_ite_approx} by the extended Krylov subspace method; see~\cite{simoncini2007new,knizhnerman2011convergence}. As mentioned in~\cite{shank2014efficient}, despite the truncation, it may appear that the rank of $\F_k$ too big to see the benefits of the truncation of the right-hand side of the Lyapunov equation. The reason is that the cost of constructing the orthonormal basis, depending on $A$  and $\F_k$ could dominate overall; and the storage could be an issue in addition. Though considering tangential approaches for $\F_k$ could be a solution; see, e.g.~\cite{simoncini2013computational}. Nonetheless, here we use the solution proposed in~\cite{shank2014efficient} which makes use of the linearity of   \eqref{eq:quad_ite_approx}. For this,
%let us write  the matrix $\F  = [f_1,\ldots f_p]$, leading to $\F\F^T = f_1f_1^T + \cdots+ f_pf_p^T$.  We can then write
%\begin{equation*}
%X_k = \sum\nolimits_{i}X_k^{(i)},
%\end{equation*}
%where $X_k^{(i)}$ is the solution to
%\begin{equation*}
%X_k^{(i)} = \cL^{-1}f_if_i^T.
%\end{equation*}
%This splitting of $X_k$ appears to be considerable memory saving as well as CPU-time saving. 
In the following, we  outline all the necessary steps in \Cref{algo:solv_gram} to determine the Gramians by summarizing the all above discussed ingredients.
\begin{algorithm}[tb!]
 \caption{Iterative scheme to determine Gramians for QB systems.}
 \begin{algorithmic}[1]
    \Statex {\bf Input:} System matrices $ A, H,N_1,\ldots,N_m, B,C$ and tolerance $\tau$.
\Statex {\bf Output:} Low-rank factors of the Gramians: $Z_k~ (P \approx Z_kZ_k^T)$ and $ X_k~ (Q \approx X_kX_k^T)$.
    \State Solve approximately $AM + MA^T + BB^T = 0$ for $P_1 \approx Z_1Z_1^T$.
    \State Solve approximately $A^TG + GA + C^TC = 0$ for $Q_1 \approx X_1X_1^T$.
        \For {$k = 2,3,\ldots $}
        \State Determine low-rank factors:
        \Statex \label{step} \qquad\qquad $\mathbb B_k = \cT_{\tau} ([H(Z_{k-1} \otimes Z_{k-1}),  N_1Z_{k-1},\ldots,N_mZ_{k-1}  ,B ])$,% = [b_1^{(k)},\ldots, b_{\mu_k}^{(k)}]$,
        \Statex \qquad\qquad $\mathbb C_k = \cT_{\tau} ([\cH^{(2)}(Z_{k-1} \otimes X_{k-1}),  N_1^T X_{k-1},\ldots,N_m^T X_{k-1},C^T ]) $.
        \State Solve approximately $AM + M A^T + \mathbb B_k\mathbb B_k^T = 0$ for $P_k\approx Z_k Z_k^T $.
        \State Solve approximately $A^TG + G A + \mathbb C_k \mathbb C_k^T = 0$ for $Q_k\approx X_kX_k^T $.
	\If{solutions are sufficiently accurate } stop. \EndIf
    \EndFor
\end{algorithmic}\label{algo:solv_gram}
\end{algorithm}

\begin{remark}
	At step 7 of \Cref{algo:solv_gram}, one can check the accuracy of  solutions by measuring the relative changes  in the solutions, i.e., $\dfrac{\|P_k - P_{k-1} \|}{\|P_k\|}$ and $\dfrac{\|Q_k - Q_{k-1} \|}{\|Q_k\|}$. When these relative changes are smaller than a \emph{tolerance} level, e.g. the machine precision, then one can stop the iterations to have sufficiently accurate solutions of the quadratic Lyapunov equations. 
\end{remark}

\begin{remark}
In order to employ \Cref{algo:solv_gram}, the right side of the conventional Lyapunov equation (see step \ref{step}) requires the computation of  $H(Z_i\otimes Z_i) =: \Gamma$ at each step, which is also computationally and memory-wise expensive. If $Z_i \in  \R^{n\times n_z}$, then the direct multiplication of $Z_i\otimes Z_i$ would have complexity of $\cO(n^2\cdot n_z^2)$,  leading to an unmanageable task for large-scale systems, even on  modern computer architectures. However, it is shown in~\cite{morBenB15} that $\Gamma$ can be determined  efficiently by making use of the tensor multiplication properties, which are also reported in the previous section. In the following, we provide the procedure to compute $\Gamma$ efficiently:
\begin{itemize}
\item Determine $\cY \in \R^{n_z\times n\times n}$ such that $\cY^{(2)} = Z_i^T \cH^{(2)}$.
\item Determine $\cK \in \R^{n\times n_z \times n_z}$ such that $\cK^{(3)} = Z_i^T \cY^{(3)}$.
\item Then, $\Gamma = \cK^{(1)}$.
\end{itemize}
This way, we can avoid determining the full matrix $Z_i\otimes Z_i$.
% but still it can be argued that the formulation of $\cY$ at an intermediate step involves storage of $\cO(n^2\cdot n_z)$. However, as we know that the matrix $H$ is generally obtained via the finite difference or finite element discretization, which typically has non-zero columns of $\cO(n)$, and it holds for $\cH^{(2)}$ as well. Therefore, the multiplication of $\cH^{(2)}$ with $Z_i^T$ does not produce a dense tensor $\cY$; hence the required complexity  is much less $\cO(n^2\cdot n_z)$. This implementation  allows us to determine $\Gamma$ efficiently for large-scale  dynamical systems. 
Analogously, we can also compute  the term $\cH^{(2)}(Z_i\otimes X_i)$.
\end{remark}
 
 Next, we discuss the existence of the solutions of quadratic type generalized Lyapunov equations. As noted \Cref{algo:solv_gram}, one can determine the solution of these Lyapunov equations using  fixed point iterations. In the following, we discuss sufficient conditions under which these iterations converge to finite solutions.    
\begin{theorem}
Consider a QB system as defined in~\cref{eq:Quad_bilin_Sys} and let $P$ and $Q$ be its reachability and observability Gramians, respectively.	Assume that the Gramians $P$ and $Q$ are determined using fixed point iterations as shown in \Cref{algo:solv_gram}.   Then, the Gramian $P$ converges to a positive semidefinite solution if
	\begin{enumerate}[label=(\roman*)]
	\item $A$ is stable, i.e., there exist  $0<\alpha\leq -\max(\real{\lambda_i (A)})$ and $\beta>0$  such that $\|e^{At}\| \leq \beta e^{-\alpha t}$. \label{eq:cond_Pexist1}
	\item $\dfrac{	\beta^2\Gamma_N }{2\alpha} < 1$, where $\Gamma_N := \sum_{k=1}^m\|N_k\|^2$.
	\item $ 1>\cD^2 - \dfrac{\beta^2 \Gamma_H}{\alpha}\dfrac{\beta^2\Gamma_B}{\alpha} > 0, ~~where~~ \cD:= 1-\dfrac{\beta^2\Gamma_N}{2\alpha}$,\label{eq:cond_Pexist3}
	where $\Gamma_B := \|BB^T\|$, $\Gamma_H := \|H\|^2$.
   \end{enumerate}
   and $\|P\|$ is bounded by
	\begin{equation}
	\|P\| \leq \dfrac{2\alpha}{\beta^2\Gamma_H} \left(\cD- \sqrt{\cD^2 - 4\dfrac{\beta^2 \Gamma_H}{2\alpha}\dfrac{\beta^2\Gamma_B}{2\alpha} }\right) =: \cP_\infty.
	\end{equation}
	Furthermore, the Gramian $Q$  also converges to a positive semidefinite solution if in addition to the above  conditions \ref{eq:cond_Pexist1}--\ref{eq:cond_Pexist3}, the following condition satisfies
	 		\begin{equation}
	 		\dfrac{\beta^2}{2\alpha}	\left(\Gamma_N + \tilde\Gamma_{H} \cP_\infty \right)< 1,
	 		\end{equation}
	 		where $\tilde{\Gamma}_H := \|\cH^{(2)}\|^2$. Moreover,  $\|Q\|$ is bounded by 
	 		\begin{equation}
		\|Q\| \leq \dfrac{\beta^2}{2\alpha}\Gamma_C \left(1-\dfrac{\beta^2}{2\alpha}	\left(\Gamma_N + \tilde\Gamma_H \cP_\infty \right)\right)^{-1},
	 		\end{equation}
	 		where $\Gamma_C:= \|C^TC\|.$
\end{theorem}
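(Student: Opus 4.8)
Here is how I would approach the proof. The plan is to majorize the matrix iterations of \Cref{algo:solv_gram} by scalar recursions and then upgrade the resulting \emph{a priori} bounds to genuine convergence by exploiting monotonicity in the Loewner order, exactly in the spirit of the convergent‑splitting arguments recalled above.

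\emph{Reachability Gramian.} Write the iteration producing $P_k$ as $P_k=\Phi(P_{k-1})$ with $P_0=0$, where, using the integral representation of the solution of the linear Lyapunov equation $\cL(X)=-W$ for $A$ stable,
\[
\Phi(X):=\int_0^\infty e^{At}\Big(H(X\otimes X)H^T+\sum_{k=1}^m N_kXN_k^T+BB^T\Big)e^{A^Tt}\,dt.
\]
First I would record two structural facts about $\Phi$ on the cone of symmetric positive semidefinite matrices. (a) $\Phi$ is \emph{order preserving}: if $0\preceq X\preceq Y$ then $Y\otimes Y-X\otimes X=(Y-X)\otimes Y+X\otimes(Y-X)\succeq 0$, hence $H(X\otimes X)H^T+\sum N_kXN_k^T\preceq H(Y\otimes Y)H^T+\sum N_kYN_k^T$, and $W\mapsto\int_0^\infty e^{At}We^{A^Tt}dt$ preserves $\preceq$. (b) Using \ref{eq:cond_Pexist1}, $\|e^{At}\|\le\beta e^{-\alpha t}$, submultiplicativity and $\|X\otimes X\|=\|X\|^2$,
\[
\|\Phi(X)\|\le\psi\big(\|X\|\big),\qquad \psi(a):=\tfrac{\beta^2}{2\alpha}\big(\Gamma_H a^2+\Gamma_N a+\Gamma_B\big).
\]
From (a) and $\Phi(0)=\int_0^\infty e^{At}BB^Te^{A^Tt}dt\succeq 0=P_0$, the sequence $(P_k)$ is nondecreasing in the Loewner order. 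From (b), $a_k:=\|P_k\|$ obeys $a_k\le\psi(a_{k-1})$; the second hypothesis $\beta^2\Gamma_N/(2\alpha)<1$ makes $\psi$ a convex increasing self‑majorant, and the fixed‑point equation $\psi(a)=a$, i.e. $\tfrac{\beta^2\Gamma_H}{2\alpha}a^2-\cD a+\tfrac{\beta^2\Gamma_B}{2\alpha}=0$, has two positive roots precisely because the discriminant is positive — which is condition \ref{eq:cond_Pexist3}. Since $a_0=0$ lies below the smaller root $\cP_\infty$ and $\psi$ is increasing, induction gives $a_k<\cP_\infty$ for all $k$. A monotone, norm‑bounded sequence of positive semidefinite matrices converges, so $P_k\to P\succeq 0$ with $\|P\|\le\cP_\infty$, and continuity of $\Phi$ yields $\Phi(P)=P$, i.e. $P$ solves~\eqref{eq:cont_lyap}.

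\emph{Observability Gramian.} Once $P$ is available — or, along the coupled iteration of \Cref{algo:solv_gram}, using the uniform bound $\|P_{k-1}\|\le\cP_\infty$ just established — the iterates $Q_k=\Psi(Q_{k-1})$ with $Q_0=0$ are governed by the \emph{affine} map
\[
\Psi(Z):=\int_0^\infty e^{A^Tt}\Big(\cH^{(2)}(P\otimes Z)(\cH^{(2)})^T+\sum_{k=1}^m N_k^TZN_k+C^TC\Big)e^{At}\,dt,
\]
which is again order preserving. Taking norms and inserting $\|P\|\le\cP_\infty$ gives $\|Q_k\|\le\rho\,\|Q_{k-1}\|+\tfrac{\beta^2}{2\alpha}\Gamma_C$ with $\rho:=\tfrac{\beta^2}{2\alpha}\big(\Gamma_N+\tilde{\Gamma}_H\cP_\infty\big)$; the extra hypothesis is exactly $\rho<1$, under which the affine scalar recursion is bounded by $\tfrac{\beta^2}{2\alpha}\Gamma_C(1-\rho)^{-1}$, the asserted bound for $\|Q\|$. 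Monotonicity of $(Q_k)$ together with this uniform bound forces $Q_k\to Q\succeq 0$ with $\Psi(Q)=Q$, i.e.~\eqref{eq:obser_lyap}.

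\emph{Main obstacle.} The delicate part is not the algebra of the scalar majorants but the passage from the \emph{a priori} norm bounds to actual convergence of the matrix sequences: this is where order‑preservation of $\Phi$ and $\Psi$ — hence monotonicity of the iterates — is indispensable, and it must be checked with care (in particular that $H(\cdot)H^T$, the maps $Z\mapsto\cH^{(2)}(P\otimes Z)(\cH^{(2)})^T$, and $W\mapsto\int_0^\infty e^{At}We^{A^Tt}dt$ all preserve $\preceq$). A second point requiring attention is that the $Q$‑iteration is coupled to the $P$‑iterates, so the bound $\cP_\infty$ — and therefore all three conditions \ref{eq:cond_Pexist1}--\ref{eq:cond_Pexist3} — re‑enters through the contraction constant $\rho$ for $Q$; one must also verify the correct ordering of the two roots of the majorant quadratic so that $\cP_\infty$ is genuinely the smaller one, which is the value the iteration started at $P_0=0$ approaches.
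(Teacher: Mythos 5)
Your proposal follows essentially the same route as the paper: bound each iterate via the integral representation of the linear Lyapunov solution, majorize $\|P_k\|$ by the scalar quadratic recursion $a_k \le \tfrac{\beta^2}{2\alpha}\big(\Gamma_H a_{k-1}^2 + \Gamma_N a_{k-1} + \Gamma_B\big)$, identify the smaller fixed point $\cP_\infty$ of the majorant under conditions (ii)--(iii), and then feed $\cP_\infty$ into the affine recursion for $\|Q_k\|$. The one place you go beyond the paper is the explicit Loewner-monotonicity argument (order preservation of $\Phi$ and $\Psi$ on the semidefinite cone together with $P_0=0$) that upgrades the a priori norm bounds to genuine convergence of the matrix sequences; the paper's proof only establishes boundedness of $\|P_k\|$ and $\|Q_k\|$ and leaves that last step to the cited convergent-splitting theory, so your addition is a tightening of the same argument rather than a different method.
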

\begin{proof}
	Let us first consider the equation corresponding to $P_1$:
	\begin{equation}
	AP_1 + AP_1 + BB^T = 0.
	\end{equation}
	Alternatively, if $A$ is stable, we can write $P_1$ in the integral form as
	\begin{equation}
	P_1 = \int_0^\infty e^{At}BB^Te^{A^Tt}dt,
	\end{equation}
	implying  
	\begin{equation}
	\|P_1\| \leq \beta^2 \|BB^T\|  \int_0^\infty e^{-2\alpha t}dt = \dfrac{\beta^2\Gamma_B}{2\alpha},
	\end{equation}
where $\Gamma_B := \|BB^T\|$.	Next, we look at the equation corresponding to $P_k$, which is given in terms of $P_{k-1}$:
	\begin{equation}
	AP_k + P_kA^T + H(P_{k-1} \otimes P_{k-1})H^T + \sum_{k=1}^mN_kP_{k-1}N_k + BB^T=0.
	\end{equation}
	We can also write $P_k$ in an integral form, provided $A$ is stable:
	\begin{align*}
	P_k &=\int_0^\infty e^{At}\left(H(P_{k-1} \otimes P_{k-1})H^T + \sum_{k=1}^mN_kP_{k-1}N_k + BB^T\right)e^{A^Tt}dt\\
	&\leq  \beta^2\left(\Gamma_H \|P_{k-1}\|^2 + \Gamma_N\|P_{k-1}\| + \Gamma_B\right)\int_0^\infty e^{-2\alpha t}dt\\
		&\leq \beta^2  \dfrac{\left(\Gamma_H \|P_{k-1}\|^2 + \Gamma_N\|P_{k-1}\| + \Gamma_B\right)}{2\alpha},
	\end{align*}
	where $\Gamma_H := \|H\|^2$ and $\Gamma_N := \sum_{k=1}^m\|N_k\|^2$.
	If we consider an upper bound for the norm of $P_{k-1}$ in order to provide an upper bound for $P_k$ and apply \Cref{appendix_convergence}, then we know that $\lim_{k\rightarrow \infty} \|P_k\|$ is bounded if
	\begin{align*}
&1>\cD^2 - 4\dfrac{\beta^2 \Gamma_H}{2\alpha}\dfrac{\beta^2\Gamma_B}{2\alpha} \geq 0, ~~where~~ \cD:= 1-\dfrac{\beta^2\Gamma_N}{2\alpha} \quad\text{and}\quad
 \dfrac{\beta^2\Gamma_N}{2\alpha} <1,
	\end{align*}
	and $\lim_{k\rightarrow \infty} \|P_k\|$ is bounded by 
	\begin{equation*}
	\lim_{k\rightarrow \infty} \|P_k\| \leq \dfrac{2\alpha}{\beta^2\Gamma_H} \left(\cD- \sqrt{\cD^2 - 4\dfrac{\beta^2 \Gamma_H}{2\alpha}\dfrac{\beta^2\Gamma_B}{2\alpha} }\right) =: \cP_\infty.
	\end{equation*}
Now, we consider the equation corresponding to $Q_1$:
	\begin{equation*}
	A^TQ_1 + A^TQ_1 + C^TC = 0,
	\end{equation*}
which again can be rewritten as:
	\begin{equation*}
	Q_1 = \int_0^\infty e^{A^Tt}C^TCe^{At}dt
	\end{equation*}
if $A$ is stable. This	implies 
	\begin{equation*}
	\|Q_1\| \leq \beta^2\Gamma_C  \int_0^\infty e^{-2\alpha t}dt = \beta^2 \dfrac{\Gamma_C}{2\alpha},
	\end{equation*}
where $\Gamma_c := \|C^TC\|$.	Next, we look at the equation corresponding to $Q_k$, that is,
	\begin{align*}
	A^TQ_k + Q_kA + \cH^{(2)}(P_{k-1} \otimes Q_{k-1})\left(\cH^{(2)}\right)^T + \sum_{k=1}^mN_k^TQ_{k-1}N_k + C^TC&=0. 
	\end{align*}
A similar analysis for $Q_k$  yields
\begin{equation*}
\|Q_k\| \leq \dfrac{\beta^2}{2\alpha}\left( \left(\Gamma_N + \tilde\Gamma_H \|P_{k-1}\| \right)Q_{k-1} + \Gamma_C \right),
\end{equation*}
where $\tilde{\Gamma}_H := \|\cH^{(2)}\|$. Since $\|P_{k-1}\| \leq \cP_\infty$ for all $k\geq 1$, we further have
\begin{equation*}
\|Q_k\| \leq \dfrac{\beta^2}{2\alpha}\left( \left(\Gamma_N + \tilde\Gamma_H \cP_\infty \right)\|Q_{k-1}\| + \Gamma_C \right).
\end{equation*}
An additional sufficient condition under which the above recurrence formula in $\|Q_{k}\|$  converges is as follows:
		\begin{equation*}
\dfrac{\beta^2}{2\alpha}	\left(\Gamma_N + \tilde\Gamma_H \cP_\infty \right)< 1,
		\end{equation*}
		and $\lim_{k\rightarrow \infty}\|Q_k\|$ is then bounded by 
		\begin{equation*}
				\lim_{k\rightarrow \infty}\|Q_k\| \leq \dfrac{\beta^2}{2\alpha}\Gamma_C \left(1-\dfrac{\beta^2}{2\alpha}	\left(\Gamma_N + \tilde\Gamma_H \cP_\infty \right)\right)^{-1}.
		\end{equation*}
		This concludes the proof.
	\end{proof}
\begin{remark}
In \Cref{algo:solv_gram}, we propose to determine the low-rank solutions of the Lyapunov equation  at each intermediate step with the same tolerance. However, one can also consider to  increase the tolerance adaptively for computing the low-rank  solution of the Lyapunov equation  with each  iteration which probably can speed up even more, see~\cite{shank2014efficient} for the generalized Lyapunov equations for bilinear systems.  
\end{remark}

\subsection{MOR using truncated Gramians}
As noted in \Cref{sec:energyfunctionals}, the quadratic forms of both actual Gramians and its truncated versions (truncated Gramians) impose  bounds for the energy functionals of QB systems, at least in the neighborhood of the origin, and we also provide the interpretation of reachability and observability of the system in terms of Gramians and truncated Gramians.  We have seen in the previous subsection that determining Gramians $P$ and $Q$ is very challenging task for large-scale settings.  Moreover, the convergence of \Cref{algo:solv_gram}  highly depends on the spectral radius condition $\cL^{-1}\Pi$, which should be less than $1$. This condition might not be satisfied  for large $H$ and $N_k$; thus,  \Cref{algo:solv_gram} may not convergence.  On the other hand, in order to compute the truncated Gramians, there is no such convergence issue. Furthermore, it can also be  observed that the bounds for energy functionals using the truncated Gramains can be much better (in the neighborhood of the origin), for example see \Cref{fig:comparison_gram}. 

Additionally, if we remove those states that are completely uncontrollable and completely unobservable, then the truncated Gramians may provide reduced systems which are of smaller orders as compared to using the Gramians of QB systems. This is due to the fact that $P \geq P_\cT$ and $Q \geq Q_\cT$. This motivates us to use the truncated Gramians to determine the reduced-order models, and we present the square-root balanced truncation for QB systems based on these truncated Gramians in \Cref{algo:BT_qbdae}.  Furthermore, we will see in \Cref{sec:Numerical} as well that these truncated Gramians also yield very good qualitative reduced-order systems for QB systems.

\begin{algorithm}[!tb]
	\caption{Balanced truncation for QB systems (truncated version).}
	\begin{algorithmic}[1]
		\Statex {\bf Input:} System matrices $ A, H,N_k, B$ and $C$, and the order of the reduced system~$\hn$.
		\Statex {\bf Output:} The reduced system's matrices  $\hA, \hH,\hN_k,\hB, \hC.$
		\State Determine low-rank approximations of the truncated Gramians $P_\cT\approx RR^T$ and $Q_\cT~\approx~SS^T$.
		\State Compute SVD of $S ^TR$:
		\Statex \qquad $S^TR = U\Sigma V = \bbm U_1 & U_2\ebm \diag{ \Sigma_1,\Sigma_2}\bbm V_1 & V_{2}\ebm^T$,
		\Statex\qquad     where $\Sigma_1$ contains the $\hn$ largest singular values of $S^TR$.
		\State Construct the projection matrices $\cV$  and $\cW$:
		\Statex \qquad $\cV = S U_1\Sigma_1^{-\tfrac{1}{2}}$ and $\cW = R V_1\Sigma_1^{-\tfrac{1}{2}}$.
		\State Determine the reduced-order system's realization:
		\Statex \qquad$\hA = \cW^TA\cV,~~\hH =\cW^T H(\cV\otimes \cV),~~\hN_k = \cW^TN_k\cV,~~ \hB = \cW^TB,~~\hC = C\cV $.
	\end{algorithmic}\label{algo:BT_qbdae}
\end{algorithm}

\subsection{Stability Preservation} We now discuss the stability of the reduced-order systems, obtained by using \Cref{algo:BT_qbdae}. For this, we consider only the autonomous part of the QB system as follows:
\begin{equation}\label{eq:auto_QB}
\dot{x}(t) = Ax(t) + H~x(t)\otimes x(t),
\end{equation}
where $x_{eq} = 0$ is a stable equilibrium. 
%\rd{ If we can construct a Lyapunov function $\cF(x)$, satisfying 
%\begin{equation*}
%\cF(x) >0\quad\mbox{and}\quad \dfrac{d}{dt}\cF(x)<0,\qquad \forall x\in \cB_{0,r}\backslash \{0\},
%\end{equation*}
%where $\cB_{0,r}$ is a ball of radius $r$ centered around $0$, then the system can reach to $x_{eq}$ asymptotically; hence the system is a locally asymptotically stable. \rd{check definition} }
{%\color{blue}
In the following, we discuss Lyapunov stability of $x_{eq}$. For this, we first note the definition of the latter stability.
\begin{definition}
	Consider a  QB system with $u\equiv 0$ \eqref{eq:auto_QB}. If there exists a Lyapunov function $\cF : \Rn \rightarrow \R$ such that 
	\begin{equation*}
	\cF(x) >0\quad\mbox{and}\quad \dfrac{d}{dt}\cF(x)<0,\qquad \forall x\in \cB_{0,r}\backslash \{0\},
	\end{equation*}
	where $\cB_{0,r}$ is a ball of radius $r$ centered around $0$, then $x_{eq} = 0$ is a locally asymptotically stable. 
\end{definition}
}
However, many other notions of the stability of nonlinear systems are available in the literature, for instance based on a certain dissipation inequality~\cite{bond2010compact}, which might be difficult to apply in the large-scale setting. In this paper, we stick to the notion of the Lyapunov-based stability for the reduced-order systems.
\begin{theorem}
Consider the QB system~\eqref{eq:Quad_bilin_Sys} with a  stable matrix  $A$. Let $P_\cT$ and $Q_\cT$ be its truncated reachability and observability Gramians, defined in \Cref{coro:gram_trc_re}, respectively. If the reduced-order system is determined as shown in \Cref{algo:BT_qbdae}, then for a Lyapunov function $\cF(\hx) = \hx^T\Sigma_1\hx$, we have
$$\cF(\hx) >0,\qquad \tfrac{d}{dt}(\cF(\hx)) <0\qquad \forall~ \hx \in \cB_{0,r}\backslash \{0\},$$
where $r = \dfrac{\sigma_{\min}(\cV^T\cG\cV)}{2 \|\Sigma_1\| \|\hH\|}$
and $\cG =  \cH^{(2)}(P_1\otimes Q_1) \left(\cH^{(2)}\right)^T + \sum_{k=1}^mN_k^TQ_1N_k +C^TC$ with $P_1$ and $Q_1$ being  the solutions of~\eqref{eq:linear_CG} and \eqref{eq:linear_OG}, respectively.
\end{theorem}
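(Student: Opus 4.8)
The plan is to verify directly that $\cF(\hx)=\hx^{T}\Sigma_{1}\hx$ is a Lyapunov function for the autonomous reduced dynamics $\dot{\hx}=\hA\hx+\hH\,\hx\otimes\hx$ on a ball around $0$. Positivity is immediate: $\Sigma_{1}=\diag(\sigma_{1},\dots,\sigma_{\hn})$ is built from the $\hn$ largest singular values $\sigma_{1}\ge\cdots\ge\sigma_{\hn}$ obtained in \Cref{algo:BT_qbdae}, which are strictly positive (under the standing assumption $P_{\cT},Q_{\cT}>0$, i.e., that the balancing is nondegenerate), so $\cF(\hx)>0$ for every $\hx\ne 0$. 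For the decay condition I would differentiate $\cF$ along the reduced flow; using $\Sigma_{1}=\Sigma_{1}^{T}$ and the fact that $\hx^{T}\Sigma_{1}\hH(\hx\otimes\hx)$ is a scalar, this yields
\begin{equation*}
\tfrac{d}{dt}\cF(\hx)=\hx^{T}\bigl(\hA^{T}\Sigma_{1}+\Sigma_{1}\hA\bigr)\hx+2\,\hx^{T}\Sigma_{1}\hH(\hx\otimes\hx),
\end{equation*}
so the whole argument reduces to identifying the symmetric matrix $\hA^{T}\Sigma_{1}+\Sigma_{1}\hA$ and then controlling the cubic remainder.

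The key observation is that, once their constant terms are collected, the truncated-Gramian equations of \Cref{coro:tru_gram} are \emph{ordinary} linear Lyapunov equations with constant, positive semidefinite right-hand sides. Indeed, since $A$ is stable, $P_{1}$ and $Q_{1}$ (the solutions of~\eqref{eq:linear_CG} and~\eqref{eq:linear_OG}) are positive semidefinite, hence so are $C^{T}C$, $\sum_{k=1}^{m}N_{k}^{T}Q_{1}N_{k}$ and $P_{1}\otimes Q_{1}$, and therefore $\cG=\cH^{(2)}(P_{1}\otimes Q_{1})(\cH^{(2)})^{T}+\sum_{k=1}^{m}N_{k}^{T}Q_{1}N_{k}+C^{T}C$ is positive semidefinite; the same holds for the right-hand side of the reachability equation in~\eqref{eq:tru_gramians}. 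Consequently $P_{\cT}$ and $Q_{\cT}$ are precisely the reachability and observability Gramians of an auxiliary \emph{linear} system that shares the matrix $A$ and has an output matrix $C'$ with $(C')^{T}C'=\cG$, and \Cref{algo:BT_qbdae} applied to the QB system is exactly the square-root balanced truncation of this auxiliary linear system. By the standard theory of linear balanced truncation (see, e.g.,~\cite{morAnt05}), the reduced system is then balanced with both reduced Gramians equal to $\Sigma_{1}$, and its reduced observability Lyapunov equation carries the projected right-hand side $\cV^{T}\cG\cV$, i.e.,
\begin{equation*}
\hA^{T}\Sigma_{1}+\Sigma_{1}\hA=-\,\cV^{T}\cG\cV\le 0 .
\end{equation*}

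With this identity in hand the remainder is routine. Substituting it into the derivative and using $\|\hx\otimes\hx\|_{2}=\|\hx\|_{2}^{2}$ together with submultiplicativity of the spectral norm,
\begin{equation*}
\tfrac{d}{dt}\cF(\hx)\le -\sigma_{\min}\!\bigl(\cV^{T}\cG\cV\bigr)\,\|\hx\|^{2}+2\,\|\Sigma_{1}\|\,\|\hH\|\,\|\hx\|^{3}=\|\hx\|^{2}\Bigl(2\,\|\Sigma_{1}\|\,\|\hH\|\,\|\hx\|-\sigma_{\min}(\cV^{T}\cG\cV)\Bigr),
\end{equation*}
which is strictly negative for every $\hx\ne 0$ with $\|\hx\|<r=\sigma_{\min}(\cV^{T}\cG\cV)/(2\,\|\Sigma_{1}\|\,\|\hH\|)$, the asserted radius; together with $\cF(\hx)>0$ this proves local asymptotic stability of $\hx_{eq}=0$. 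If $\cV^{T}\cG\cV$ happens to be singular, then $r=0$ and the statement is vacuous, so no extra nondegeneracy hypothesis is needed; observability of $(A,C)$ is a convenient sufficient condition guaranteeing $r>0$.

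I expect the only genuinely non-routine step to be the structural reduction in the second paragraph — recognizing that balanced truncation with the truncated Gramians coincides with a linear balanced truncation of an auxiliary linear system, which is precisely what licenses the projected identity $\hA^{T}\Sigma_{1}+\Sigma_{1}\hA=-\cV^{T}\cG\cV$. The care there lies in checking that every constant right-hand side is positive semidefinite (so that the required factorizations, e.g., $(C')^{T}C'=\cG$, exist) and in treating the low-rank factors produced in \Cref{algo:BT_qbdae} as exact factors, so that the square-root balancing identities carry over verbatim; the subsequent cubic-term estimate producing $r$ is then straightforward.
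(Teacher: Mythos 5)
Your proposal is correct and follows essentially the same route as the paper: the crux in both is the projected Lyapunov identity $\hA^{T}\Sigma_{1}+\Sigma_{1}\hA=-\cV^{T}\cG\cV$, followed by the identical estimate of the cubic term yielding the radius $r$. The only cosmetic difference is that you obtain the identity by viewing \Cref{algo:BT_qbdae} as square-root balanced truncation of an auxiliary linear system with constant PSD right-hand sides and citing standard linear BT theory, whereas the paper verifies it directly via the relation $\cW\Sigma_{1}=Q_{\cT}\cV$; both arguments amount to the same computation.
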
\label{thm:stability}
\begin{proof}
First, we establish the relation between $\cV$, $\cW$, $Q_\cT$ and $\Sigma_1$. For this, we consider
\begin{align*}
\cW\Sigma_1 &= R V_1 \Sigma_1^{\tfrac{1}{2}} = R V_1\bbm \Sigma_1 & 0 \ebm^T U^T U_1 \Sigma_1^{-\tfrac{1}{2}}= R V\Sigma U^T U_1 \Sigma_1^{-\tfrac{1}{2}}\\
& = R R^TS^T U_1 \Sigma_1^{-\tfrac{1}{2}} = Q_\cT\cV.
\end{align*}
Keeping in mind  the above relation, we get
\begin{equation}\label{eq:red_gram}
\begin{aligned}
&\hA^T \Sigma_1 + \Sigma_1 \hA + \cV ^T\cG\cV  = \cV^T A^T \cW\Sigma_1 + \Sigma_1 \cW^T A \cV + \cV^T\cG \cV \\
&\qquad = \cV^T A^T Q_\cT \cV +\cV^T Q_\cT A \cV + \cV^T \cG \cV  = \cV^T(A^TQ_\cT + Q_\cT A + \cG)\cV = 0.
\end{aligned}
\end{equation}
%In the similar fashion, we can argue the following result
%\begin{align*}
%&\hA \Sigma_1 + \Sigma_1 \hA^T + \cW ^T\cG_2\cW =0.
%\end{align*}
Since $\cG$ is a positive  semidefinite matrix and $\cV$ has full column rank,  $\cV^T\cG\cV$ is also positive semidefinite. This implies that $\eta(\hA)~\leq~0$, where $\eta(\cdot)$ denotes the spectral abscissa of a matrix. Coming back to the Lyapunov function $\cF(\hx) = \hx^T\Sigma_1 \hx$, which is always greater than $0$  for all $\hat{x}\not=0$ due to $\Sigma_1$ being a positive definite matrix, we compute the derivative of the Lyapunov function as
 \begin{equation*}
 \begin{aligned}
  \dfrac{d}{dt}\cF(\hx) &= \dot{\hx}^T\Sigma_1\hx + \hx^T\Sigma_1\dot{\hx} \\
  &= \hx^T \hA^T \Sigma_1 \hx +  (\hx^T\otimes \hx^T)\hH^T \Sigma_1\hx + \hx^T \Sigma_1\hA\hx + \hx^T \Sigma_1 \hH (\hx\otimes \hx)\\
  &= \hx^T(\hA^T\Sigma_1 + \Sigma_1\hA) \hx +  (\hx^T\otimes \hx^T)\hH^T \Sigma_1\hx  + \hx^T \Sigma_1 \hH (\hx\otimes \hx).
 \end{aligned}
 \end{equation*}
 Substituting $\hA^T\Sigma_1 + \Sigma_1\hA = - \cV^T\cG\cV$ from \eqref{eq:red_gram} in the above equation yields
 \begin{align}\label{eq:2}
  \tfrac{d}{dt}\cF(\hx) &=  -\hx^T\cV^T\cG\cV \hx +  2\hx^T \Sigma_1 \hH (\hx\otimes \hx).
 \end{align}
As
 \begin{align*}
  & \hx^T \cV^T\cG\cV \hx \geq  \sigma_{\text{min}}(\cV^T\cG\cV) \|\hx\|^2,
 \end{align*}
implying
  \begin{align*}
   &  -\hx^T\cV^T\cG\cV x \leq -\sigma_{\text{min}}(\cV^T\cG\cV) \|\hx\|^2,
  \end{align*}
 inserting the above inequality in~\eqref{eq:2} leads to
  \begin{align*}
   \tfrac{d}{dt}\cF(\hx) &\leq  -\sigma_{\text{min}}(\cV^T\cG\cV) \|\hx\|^2 +  2\|\hx\|^3 \|\Sigma_1\| \|\hH\|.
  \end{align*}
%  For $\hx \in \cB_{0,r}\backslash \{0\}$, we have
%  $$\|\hx\| < \tfrac{\sigma_{\text{min}}(\cV^T\cG\cV)}{2\cdot \|\Sigma_1\|\cdot \|\hH\|}.$$
For locally asymptotic stability of the reduced-order system, we require
   \begin{align*}
   \tfrac{d}{dt}\cF(\hx) &\leq  -\sigma_{\text{min}}(\cV^T\cG\cV) \|\hx\|^2 +  2\|\hx\|^3 \|\Sigma_1\| \|\hH\| <0,
  \end{align*}
which gives rise to the following bound on $\|\hx\|$:
  $$\|\hx\| < \tfrac{\sigma_{\text{min}}(\cV^T\cG\cV)}{2 \|\Sigma_1\|  \|\hH\|}.$$
  This concludes the proof.
\end{proof}

\section{Numerical Experiments} \label{sec:numerics}\label{sec:Numerical}\def\redQBbal {\ensuremath{{\mathsf{RedQBbal}}}}
\def\redBbal {\ensuremath{{\mathsf{RedBbal}}}}

\def\GramQB {\ensuremath{{\mathsf{TGramQB}}}}
\def\GramB {\ensuremath{{\mathsf{TGramB}}}}

\definecolor{mycolor1}{rgb}{1.00000,0.00000,1.00000}%
\definecolor{mycolor2}{rgb}{0.00000,1.00000,1.00000}%
% \addplot [color=black,solid,forget plot,line width = 2pt]
%\addplot [color=mycolor1,solid,line width = 1.5pt,dashed, mark = triangle*, mark repeat = 17]
%\addplot [color=red,solid,forget plot,line width = 1.1pt,mark = +, mark repeat = 20]
% \addplot [color=mycolor2,solid,line width = 1.1pt,dotted, mark = diamond*, mark repeat = 23]

In this section, we consider MOR of several QB control systems and evaluate the efficiency of the proposed balanced truncation technique (\Cref{algo:BT_qbdae}). For this, we need to solve a number of conventional  Lyapunov equations. In our numerical experiments, we determine low-rank factors of these Lyapunov equations by using the ADI method as proposed in~\cite{benner2014self}. We compare the proposed methodology with the existing moment-matching techniques for QB systems, namely one-sided moment-matching~\cite{morGu09} and  its recent extension to two-sided moment-matching~\cite{morBenB15}. These moment-matching methods aim at approximating the underlying generalized transfer functions of the system. Moreover, we need interpolation points in order to apply the moment-matching methods; thus, we choose $l$ linear $\cH_2$-optimal interpolation points, determined by using \emph{IRKA}~\cite{morGugAB08} on the corresponding linear part. This leads to a reduced QB system of order $\hn = 2l$.  All the
simulations are done on
%an \intel \coretwo~Quad CPU Q9550@2.83GHz 6 MB cache, 4 GB RAM, openSUSE Linux 12.04, \matlab~Version 8.0.0.783(R2012b)64-bit(glnxa64).
\matlab~Version 8.0.0.783(R2012b)64-bit(glnxa64) on a board with 4 \intel ~\xeon ~E7-8837 CPUs with a 2.67-GHz clock speed, 8 Cores each and 1TB of total RAM, openSUSE Linux 12.04.

\subsection{Nonlinear RC ladder}
As a first example, we discuss a nonlinear RC ladder. It is a well-known example and is used as one of the benchmark problems in the community of nonlinear model reduction; see, e.g.,~\cite{bai2002krylov,morBreD10,morGu09,li2005compact,morPhi03}.  A detailed description of the dynamics can be found in the mentioned references; therefore, we omit it for the brevity of the paper.  However, we like to comment on the nonlinearity present in the RC ladder. The nonlinearity arises from the presence of the diode I-V characteristic $i_D := e^{40v_D}{-} v_D {-}1$, where $v_D$ denotes the voltage across the diode. As shown in~\cite{morGu09},  introducing some appropriate new variables allows us to write the system dynamics in the QB form of dimension $n = 2 k$, where $k$ is the number of capacitors in the ladder.

\begin{figure}[!tb]
	\centering
	\setlength\fheight{3cm}
	\setlength\fwidth{6cm}
	\tikzsetnextfilename{Figures/RC_Sigma}%
	% This file was created by matlab2tikz v0.4.7 running on MATLAB 7.11.
% Copyright (c) 2008--2014, Nico Schlömer <nico.schloemer@gmail.com>
% All rights reserved.
% Minimal pgfplots version: 1.3
% 
% The latest updates can be retrieved from
%   http://www.mathworks.com/matlabcentral/fileexchange/22022-matlab2tikz
% where you can also make suggestions and rate matlab2tikz.
% 
\begin{tikzpicture}

\begin{axis}[%
width=\fwidth,
height=\fheight,
scale only axis,
xmin=0,
xmax=60,
ymode=log,
ymin=1e-20,
ymax=1,
ytick = {1.4e-4,1e-15},
yticklabels={1.4e-04,1e-15},
xtick = {10,45},
xticklabels={10,45},
ylabel = {$\sigma$}
]
\addplot [color=blue,solid,forget plot,line width = 1.2pt]
  table[row sep=crcr]{1	1\\
2	0.229562345662743\\
3	0.0663156577747169\\
4	0.0364527644588128\\
5	0.0111823801986215\\
6	0.00393305475145492\\
7	0.00209159251963463\\
8	0.00117617884015827\\
9	0.000412944558781164\\
10	0.000140631296881609\\
11	5.34951916058516e-05\\
12	3.26622115078762e-05\\
13	1.41611737415092e-05\\
14	5.05402258718164e-06\\
15	2.49562597405828e-06\\
16	1.42355529263872e-06\\
17	5.25900183135866e-07\\
18	2.17608018496604e-07\\
19	1.39922020586754e-07\\
20	5.54454948936037e-08\\
21	2.32879809923543e-08\\
22	1.4825835365824e-08\\
23	6.04687999008707e-09\\
24	2.68714964088052e-09\\
25	1.66818621976868e-09\\
26	6.70312975080217e-10\\
27	3.48465424289662e-10\\
28	1.92197106765617e-10\\
29	7.95281927995842e-11\\
30	4.75692576634679e-11\\
31	2.24715394594569e-11\\
32	1.06194050710299e-11\\
33	6.38665309069647e-12\\
34	2.72339508656769e-12\\
35	1.62066110700597e-12\\
36	7.85975717687222e-13\\
37	3.91051335860335e-13\\
38	2.28824938576364e-13\\
39	1.00140309574445e-13\\
40	5.98960518517007e-14\\
41	2.92386768812554e-14\\
42	1.25396870480611e-14\\
43	7.0914020405516e-15\\
44	3.65791088054997e-15\\
45	1.37917946848204e-15\\
46	1.57873791469124e-16\\
47	1.1333918884886e-16\\
48	9.03518288235931e-17\\
49	9.03518288235931e-17\\
50	9.03518288235931e-17\\
51	9.03518288235931e-17\\
52	9.03518288235931e-17\\
53	9.03518288235931e-17\\
54	9.03518288235931e-17\\
55	9.03518288235931e-17\\
56	9.03518288235931e-17\\
57	9.03518288235931e-17\\
58	9.03518288235931e-17\\
59	9.03518288235931e-17\\
60	9.03518288235931e-17\\
61	9.03518288235931e-17\\
62	9.03518288235931e-17\\
63	9.03518288235931e-17\\
64	9.03518288235931e-17\\
65	9.03518288235931e-17\\
66	9.03518288235931e-17\\
67	9.03518288235931e-17\\
68	9.03518288235931e-17\\
69	9.03518288235931e-17\\
70	9.03518288235931e-17\\
71	9.03518288235931e-17\\
72	9.03518288235931e-17\\
73	5.17087281923197e-17\\
74	4.28765382578134e-17\\
};
\addplot [color=green,dashed,forget plot,line width = 1.2pt]
  table[row sep=crcr]{0	1.4e-4\\
80	1.4e-4\\
};
\addplot [color=green,dashed,forget plot,line width = 1.2pt]
  table[row sep=crcr]{0	1e-15\\
80	1e-15\\
};
\addplot [color=green,dashed,forget plot,line width = 1.2pt]
  table[row sep=crcr]{10	1e0\\
10	1e-20\\
};
\addplot [color=green,dashed,forget plot,line width = 1.2pt]
  table[row sep=crcr]{45	1e0\\
45	1e-20\\
};
\end{axis}
\end{tikzpicture}%%

	\caption{A RC ladder: decay of the normalized singular values based the truncated Gramians, and the dotted lines show the normalized singular value for $\hn =  10$ and the order of the reduced system corresponding to the normalized singular value  $1e{-15}$.}
	\label{fig:RC_sigma}
\end{figure}

We consider $500$ capacitors in the ladder, leading to a QB system of order $n=1000$. For this particular example, the matrix $A$ is a semi-stable matrix, i.e., $0\subset \sigma(A) $. As a result, the truncated Gramians of the system might not exist; therefore, we replace the matrix $A$ by $A_s := A {-} 0.05I$, where $I$ is the identity matrix, to determine these Gramians. However, note that we project the original system with the matrix $A$ to compute a reduced-order system  but the projection matrices are computed using the Gramians obtained via the shifted matrix $A_s$.  In \Cref{fig:RC_sigma}, we show the decay of the  singular values, determined by the truncated Gramians (with the shifted $A$). We then compute the reduced system of order $\hn = 10$ by using balanced truncation. Also, we determine $5$ $\cH_2$-optimal linear interpolation points and compute reduced-order systems of order $\hn =  10$ via one-sided and two-sided projection methods.

To compare the quality of these approximations, we simulate these systems for the input signals $u_1(t) = 5\left(\sin(2\pi/10) +1\right)$ and $u_2(t) = 10\left( t^2\exp(-t/5)\right)$.  \Cref{fig:RC_timedomain} presents the transient responses and relative errors of the output for these input signals, which shows that balanced truncation outperforms the one-sided interpolatory method; on the other hand, we see that balanced truncation is competitive   to the two-sided interpolatory projection for this example.

\begin{figure}[!tb]
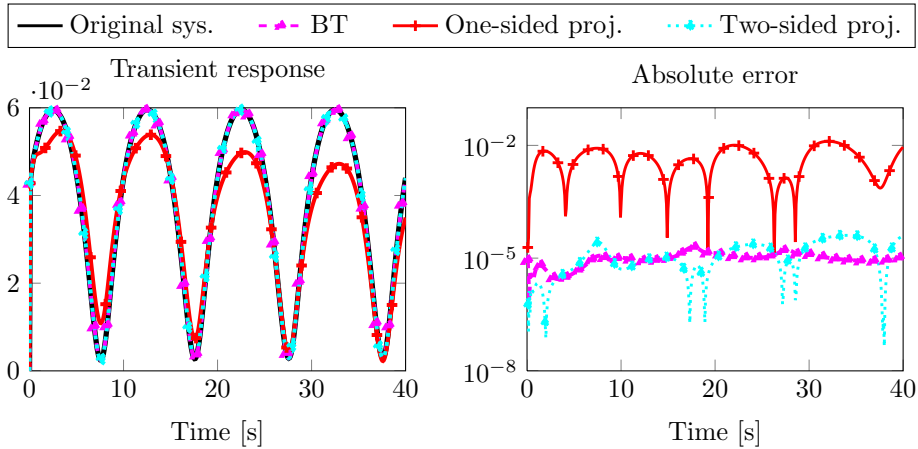
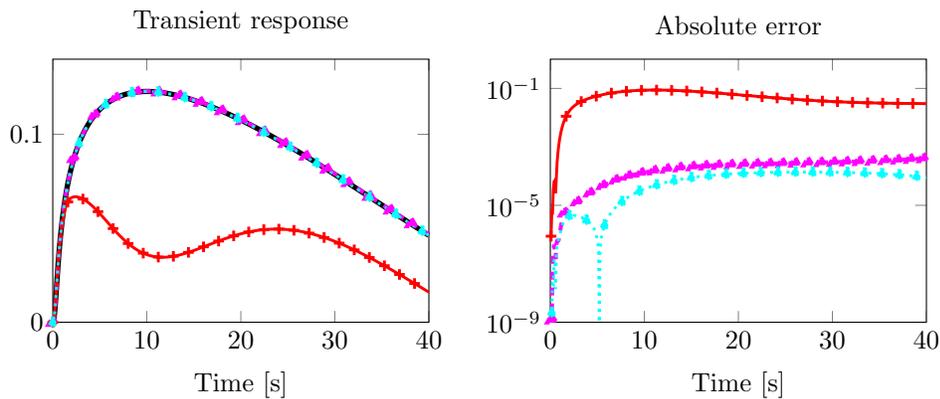

	\centering
	\begin{tikzpicture}
	\begin{customlegend}[legend columns=-1, legend style={/tikz/every even column/.append style={column sep=.5cm}} , legend entries={Original sys., BT, One-sided proj. , Two-sided proj.}, ]
	\addlegendimage{black,line width = 1.1pt}
	\addlegendimage{mycolor1,line width = 1.1pt,mark = triangle*, dashed}
	\addlegendimage{red ,line width = 1.1pt, mark = +}
	\addlegendimage{mycolor2,line width = 1.1pt, dotted, mark = diamond*}
	\end{customlegend}
	\end{tikzpicture}
	\begin{subfigure}[!htb]{\textwidth}
		\centering
		\setlength\fheight{3.5cm}
		\setlength\fwidth{5cm}
	\tikzsetnextfilename{FinalPics/RC_Input1_response}%
	\input{FinalPics/RC_Input1_response.tikz}%

		\centering
		\setlength\fheight{3.5cm}
		\setlength\fwidth{5cm}
	\tikzsetnextfilename{FinalPics/RC_Input1_error}%
	\input{FinalPics/RC_Input1_error.tikz}%

		\caption{Comparison of the original and the reduced-order systems for $u_1(t) = 5\left(\sin(2\pi/10) +1\right)$. }
	\end{subfigure}
		\begin{subfigure}[!tb]{\textwidth}
			\centering
			\setlength\fheight{3.5cm}
			\setlength\fwidth{5cm}
	\tikzsetnextfilename{FinalPics/RC_Input2_response}%
	\input{FinalPics/RC_Input2_response.tikz}%

			\centering
			\setlength\fheight{3.5cm}
			\setlength\fwidth{5cm}
	\tikzsetnextfilename{FinalPics/RC_Input2_error}%
	\input{FinalPics/RC_Input2_error.tikz}%

		\caption{Comparison of the original and the reduced-order systems for $u_2(t) = 10\left( t^2\exp(-t/5)\right)$. }
		\end{subfigure}
	\caption{A RC ladder: comparison of reduced-order systems obtained by balanced truncation (BT) and moment-matching methods for  two arbitrary control inputs.}
	\label{fig:RC_timedomain}
\end{figure}

\subsection{One-dimensional Chafee-Infante equation}
As a second example, we consider the one-dimensional Chafee-Infante (Allen-Cahn) equation. This nonlinear system has been widely studied in the literature; see, e.g.,~\cite{chafee1974bifurcation,hansen2012second}, and its model reduction related problem was recently considered in~\cite{morBenB15}. The governing equation, subject to  initial conditions and  boundary control,   have a cubic nonlinearity:
\begin{equation}\label{sys:Chafee_infante}
 \begin{aligned}
  \dot{v} +  v^3 &= v_{xx} + v, & ~~ &(0,L)\times (0,T),&\qquad
  v(0,\cdot)  &= u(t), & ~~ & (0,T),\\
  v_x(L,\cdot) &= 0, & ~~ & (0,T),&
  v(x,0) &= 0, & ~~ & (0,L).
 \end{aligned}
\end{equation}
Here, we  make use of a finite difference scheme and consider $k$ grid points in the spatial domain, leading to a semi-discretized nonlinear ODE. As shown in~\cite{morBenB15}, the smooth nonlinear system can be transformed into a QB system by introducing appropriate new state variables. Therefore, the system~\eqref{sys:Chafee_infante} with the cubic nonlinearity can be rewritten  in the QB form by defining  new variables $w_i = v_i^2$ with derivate  $\dot{w}_i = 2v_i\dot{v}_i$. We observe the response at the right boundary at $x = L$. We use the number of grid points $ k = 500$, which results in a QB system of  dimension $n = 2\cdot500 = 1000$  and  set the length $L = 1$. In \Cref{fig:1D_chafee_singular}, we show the decay of the normalized singular values based on the truncated Gramians of the system.

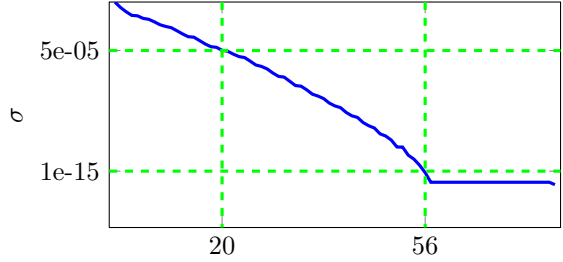
\begin{figure}[!tb]
\centering
	\setlength\fheight{3cm}
	\setlength\fwidth{6cm}
	\tikzsetnextfilename{Figures/Chafee_Sigma}%
	% This file was created by matlab2tikz v0.4.7 running on MATLAB 7.11.
% Copyright (c) 2008--2014, Nico Schlömer <nico.schloemer@gmail.com>
% All rights reserved.
% Minimal pgfplots version: 1.3
% 
% The latest updates can be retrieved from
%   http://www.mathworks.com/matlabcentral/fileexchange/22022-matlab2tikz
% where you can also make suggestions and rate matlab2tikz.
% 
\begin{tikzpicture}

\begin{axis}[%
width=\fwidth,
height=\fheight,
scale only axis,
xmin=0,
xmax=80,
ymode=log,
ymin=1e-20,
ymax=1,
ytick = {5e-5,1e-15},yticklabels={5e-05,1e-15},
xtick = {20,56},xticklabels={20,56},
ylabel = {$\sigma$}
]
\addplot [color=blue,solid,forget plot,line width = 1.2]
  table[row sep=crcr]{1	1\\
2	0.327721718984446\\
3	0.137147627740624\\
4	0.0653638328743227\\
5	0.0598908694083367\\
6	0.0357002372918929\\
7	0.0293372323338058\\
8	0.0174717542250776\\
9	0.00967101037961997\\
10	0.00684256551969583\\
11	0.00528570501244001\\
12	0.00282633801970931\\
13	0.00147933666302931\\
14	0.000843313060859994\\
15	0.000760347058219831\\
16	0.000384404950294705\\
17	0.000191925160651136\\
18	0.000109923130741453\\
19	9.59818944884668e-05\\
20	5.2503938687367e-05\\
21	4.32330427790912e-05\\
22	2.37292254991926e-05\\
23	1.45097090587808e-05\\
24	1.18319541107117e-05\\
25	5.63441943602702e-06\\
26	2.58524879458992e-06\\
27	1.9261217401414e-06\\
28	1.1543934594223e-06\\
29	5.05383645525665e-07\\
30	2.56438261095115e-07\\
31	2.18120210626264e-07\\
32	9.30799671073286e-08\\
33	3.92810760996664e-08\\
34	3.41994471396489e-08\\
35	1.63787003943334e-08\\
36	6.74387447848644e-09\\
37	4.56565460938058e-09\\
38	2.73881654394338e-09\\
39	1.0928712782247e-09\\
40	6.09757010008186e-10\\
41	4.25807398560836e-10\\
42	1.60914644479464e-10\\
43	8.12357134019808e-11\\
44	5.85653619701156e-11\\
45	2.03506071982201e-11\\
46	1.06369760850757e-11\\
47	6.68274826990649e-12\\
48	2.04421359593952e-12\\
49	1.30615798738923e-12\\
50	5.66844984193097e-13\\
51	1.38476238198049e-13\\
52	1.3569620428653e-13\\
53	2.60132188897984e-14\\
54	1.16911502653909e-14\\
55	3.41002783472923e-15\\
56	7.42417751286477e-16\\
57	1.03316375052987e-16\\
58	1.03316375052987e-16\\
59	1.03316375052987e-16\\
60	1.03316375052987e-16\\
61	1.03316375052987e-16\\
62	1.03316375052987e-16\\
63	1.03316375052987e-16\\
64	1.03316375052987e-16\\
65	1.03316375052987e-16\\
66	1.03316375052987e-16\\
67	1.03316375052987e-16\\
68	1.03316375052987e-16\\
69	1.03316375052987e-16\\
70	1.03316375052987e-16\\
71	1.03316375052987e-16\\
72	1.03316375052987e-16\\
73	1.03316375052987e-16\\
74	1.03316375052987e-16\\
75	1.03316375052987e-16\\
76	1.03316375052987e-16\\
77	1.03316375052987e-16\\
78	1.03316375052987e-16\\
79	6.34002089616109e-17\\
80	0\\
81	0\\
82	0\\
83	0\\
84	0\\
85	0\\
86	0\\
87	0\\
88	0\\
89	0\\
90	0\\
91	0\\
92	0\\
93	0\\
94	0\\
95	0\\
96	0\\
97	0\\
98	0\\
99	0\\
100	0\\
101	0\\
102	0\\
};
\addplot [color=green,dashed,forget plot,line width = 1.2]
  table[row sep=crcr]{0	5e-5\\
80	5e-5\\
};
\addplot [color=green,dashed,forget plot,line width = 1.2]
  table[row sep=crcr]{0	1e-15\\
80	1e-15\\
};
\addplot [color=green,dashed,forget plot,line width = 1.2]
  table[row sep=crcr]{20	1e0\\
20	1e-20\\
};
\addplot [color=green,dashed,forget plot,line width = 1.2]
  table[row sep=crcr]{56	1e0\\
56	1e-20\\
};
\end{axis}
\end{tikzpicture}%%

		\caption{Chafee-Infante equation: decay of the normalized singular values based the truncated Gramians, and dotted line shows the normalized singular value  for $\hn =  20$ and the order of the reduced-order system corresponding to the normalized singular value  $1e{-15}$.}
		\label{fig:1D_chafee_singular}
\end{figure}

We determine reduced systems of order $\hn =   20$ by using balanced truncation, and one-sided and two-sided interpolatory projection methods. To compare the quality of these reduced-order systems, we observe the outputs of the original and reduced-order  systems for two arbitrary control inputs $u(t) = 5t\exp(-t)$ and $u(t) = 30(\sin(\pi t)+1)$ in \Cref{fig:time_domain_CI}.

\begin{figure}[!htb]
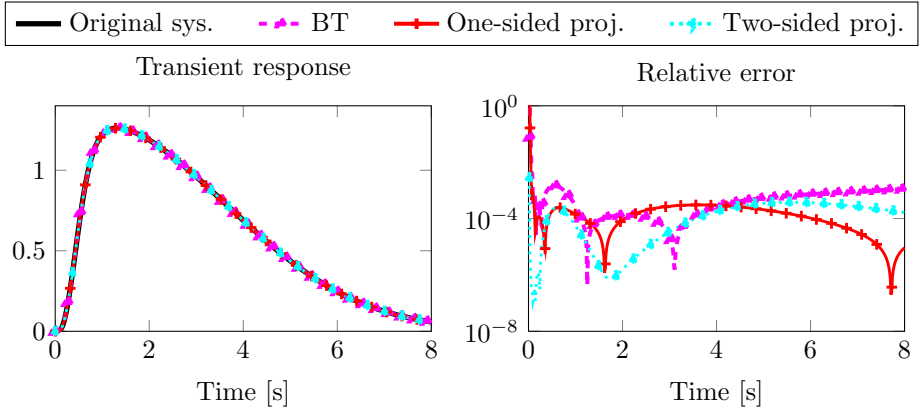
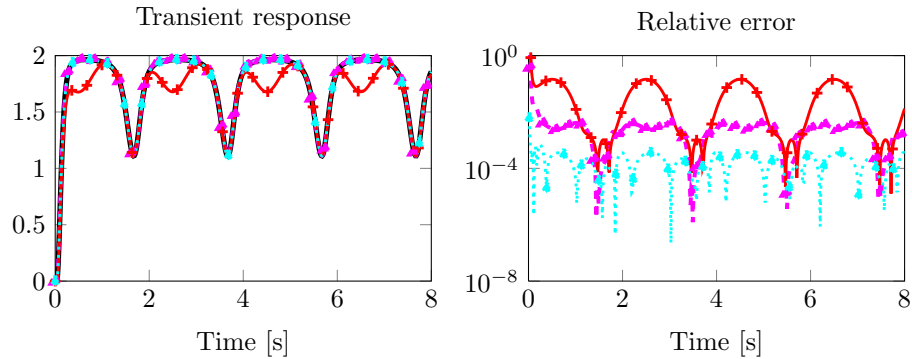

        \centering
        \begin{tikzpicture}
     \begin{customlegend}[legend columns=-1, legend style={/tikz/every even column/.append style={column sep=0.5cm}} , legend entries={Original sys., BT, One-sided proj. , Two-sided proj.}, ]
	\addlegendimage{black,line width = 2pt}
	\addlegendimage{mycolor1,line width = 1.5pt,mark = triangle*, dashed}
	\addlegendimage{red ,line width = 1.5pt, mark = +}
	\addlegendimage{mycolor2,line width = 1.5pt, dotted, mark = diamond*}
     \end{customlegend}
 \end{tikzpicture}
 \begin{subfigure}[!b]{\textwidth}
  \centering
		\setlength\fheight{3cm}
	\setlength\fwidth{5.0cm}
	\tikzsetnextfilename{FinalPics/Chafee_Input2_response}%
	\input{FinalPics/Chafee_Input2_response.tikz}%

	\tikzsetnextfilename{FinalPics/Chafee_Input2_error}%
	\input{FinalPics/Chafee_Input2_error.tikz}%

			\caption{Comparison of the original and the reduced-order systems for $u_1(t) = 5\left.t\exp(-t)\right.$. }
			\label{fig:chafee_input2}
 \end{subfigure}
%\end{figure}
%\begin{figure}[!tb]
% \ContinuedFloat
  \begin{subfigure}[!tb]{\textwidth}
  	\centering
  	\setlength\fheight{3cm}
  	\setlength\fwidth{5.0cm}
	\tikzsetnextfilename{FinalPics/Chafee_Input1_response}%
	\input{FinalPics/Chafee_Input1_response.tikz}%

	\tikzsetnextfilename{FinalPics/Chafee_Input1_error}%
	\input{FinalPics/Chafee_Input1_error.tikz}%

  				\caption{Comparison of the original and the reduced-order systems for $u_2(t) = 30\left(\sin(\pi t)+1\right)$. }
  \end{subfigure}
 \caption{Chafee-Infante equation: comparison of the reduced-order systems obtained via balanced truncation and moment-matching methods for the inputs $u_1(t) = 5\left(t\exp(-t)\right)$  and $u_2(t) = 30\left(\sin(\pi t)+1\right)$. }
\label{fig:time_domain_CI}
\end{figure}

\Cref{fig:chafee_input2} shows that the reduced systems obtained via balanced truncation and one-sided and two-sided interpolatory projection methods are almost  of the same quality for input $u_1$. But for the input $u_2$, the reduced system obtained via the one-sided interpolatory projection method completely fails to capture the dynamics of the system, while balanced truncation and two-sided interpolatory projection can reproduce the system dynamics with a slight advantage of two-sided projection regarding accuracy.

However, it is worthwhile to mention  that as we increase the order of the reduced system, the two-sided interpolatory projection method tends to produce unstable reduced-order systems. On the other hand, the accuracies of the reduced-order systems obtained by balanced truncation and one-sided moment-matching increase with the order of the reduced systems.

%%%%%%%%%%%%%%%%%%%%%%%%%%%%%%%%%%%%%%%%%%%%%%%
%%%%%%%% FitzHugoNu
\subsection{The FitzHugh-Nagumo (F-N) system}
Lastly, we consider the F-N system, a simplified neuron model of the Hodgkin-Huxley model, which describes activation and deactivation dynamics of a spiking neuron. This model has been considered in the framework of POD-based~\cite{morChaS10} and moment-matching model reduction techniques~\cite{morBenB12a}. The dynamics of the system is governed by the following nonlinear coupled differential equations:
\begin{equation}
\begin{aligned}
\epsilon v_t(x,t) & =\epsilon^2v_{xx}(x,t) + f(v(x,t)) -w(x,t) + q,\\
w_t(x,t) &= hv(x,t) -\gamma w(x,t) + q
\end{aligned}
\end{equation}
with a nonlinear function $f(v(x,t)) = v(v-0.1)(1-v)$  and the initial and boundary conditions:
\begin{equation}
\begin{aligned}
v(x,0) &=0, &\quad w(x,0)&=0, \quad & &x\in [0,L],\\
v_x(0,t) &= i_0(t), & v_x(1,t) &= 0, & &t\geq 0,
\end{aligned}
\end{equation}
where $\epsilon = 0.015,~h=0.5,~\gamma = 2,~q = 0.05$. We set the length $L = 0.2$. The stimulus $i_0$ acts as an actuator, taking the values $i_0(t) = 5\cdot 10^4t^3\exp(-15t)$, and the variables $v$ and $w$ denote the voltage and recovery voltage, respectively. We also assume the same outputs of interest as considered in~\cite{morBenB12a}, which are $v(0,t)$ and $w(0,t)$. These outputs describe nothing but the limit cyclic at the left boundary. Using a finite difference discretization scheme, one can obtain a system with two inputs and two outputs of  dimension $2 k$ with cubic nonlinearities, where $k$ is the number of degrees of freedom.   Similar to the previous  example, the F-H system can also be transformed into a QB system of dimension $n = 3 k$ by introducing a new state variable $z_i = v_i^2$. We set $k = 500$,  resulting in a QB system of order $n = 1500$.  \Cref{fig:Fitz_sigma} shows the decay of the singular values  based on the truncated Gramians for the QB system.
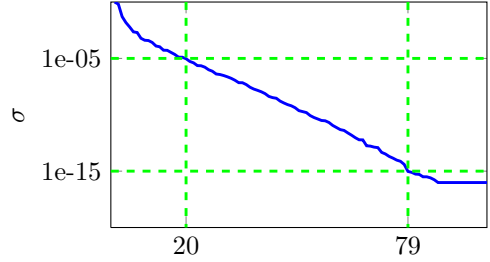
\begin{figure}[tb!]
        \centering
  \centering
  	\setlength\fheight{3cm}
	\setlength\fwidth{5cm}
	\tikzsetnextfilename{Figures/Fitz_Sigma}%
	% This file was created by matlab2tikz v0.4.7 running on MATLAB 7.11.
% Copyright (c) 2008--2014, Nico Schlömer <nico.schloemer@gmail.com>
% All rights reserved.
% Minimal pgfplots version: 1.3
% 
% The latest updates can be retrieved from
%   http://www.mathworks.com/matlabcentral/fileexchange/22022-matlab2tikz
% where you can also make suggestions and rate matlab2tikz.
% 
\begin{tikzpicture}

\begin{axis}[%
width=\fwidth,
height=\fheight,
scale only axis,
xmin=0,
xmax=100,
ymode=log,
ymin=1e-20,
ymax=1,
ytick = {1e-5,1e-15},
yticklabels={1e-05,1e-15},
xtick = {20,79},
xticklabels={20,79},
ylabel = {$\sigma$}
]
\addplot [color=blue,solid,forget plot,line width = 1.1pt]
  table[row sep=crcr]{1	1\\
2	0.647281837956837\\
3	0.0510042741209191\\
4	0.0151544761715599\\
5	0.00610836068256647\\
6	0.00225571457115442\\
7	0.00197676925611745\\
8	0.000613623968095761\\
9	0.000408558284908071\\
10	0.000383699917064134\\
11	0.000260347085214034\\
12	0.000125831635871974\\
13	0.000106890524328919\\
14	5.33331901147038e-05\\
15	5.19364440293642e-05\\
16	3.39600188780222e-05\\
17	2.24767460525141e-05\\
18	1.28348203262763e-05\\
19	1.27453809560998e-05\\
20	9.32210107521646e-06\\
21	5.37735908049636e-06\\
22	4.025376826218e-06\\
23	2.30809860623783e-06\\
24	2.21662966174914e-06\\
25	1.6641618279093e-06\\
26	9.85548583442404e-07\\
27	7.27202615039302e-07\\
28	4.14306294100142e-07\\
29	3.730403486153e-07\\
30	3.02408233763846e-07\\
31	2.32337500386856e-07\\
32	1.7098438890812e-07\\
33	1.15267296432287e-07\\
34	7.03849228168793e-08\\
35	6.94814901388299e-08\\
36	4.78913081557202e-08\\
37	2.78091729055474e-08\\
38	1.91405288981787e-08\\
39	1.47060372156334e-08\\
40	1.09736773444372e-08\\
41	7.53710705349778e-09\\
42	4.27106453771568e-09\\
43	3.33136060106473e-09\\
44	2.91862766933469e-09\\
45	1.63887027302277e-09\\
46	1.11521778980956e-09\\
47	7.95397328077599e-10\\
48	6.22371430728967e-10\\
49	4.20283228573783e-10\\
50	2.32203049427715e-10\\
51	1.90591885789577e-10\\
52	1.56361586707744e-10\\
53	8.494819663811e-11\\
54	5.73523903519026e-11\\
55	4.25465992125319e-11\\
56	3.12406899637513e-11\\
57	2.92250383726377e-11\\
58	2.06375926274038e-11\\
59	1.13083318457968e-11\\
60	7.38524858215203e-12\\
61	4.8217545771334e-12\\
62	4.08510469065876e-12\\
63	2.46757230559906e-12\\
64	1.549583259338e-12\\
65	9.27067837769274e-13\\
66	6.05500090069775e-13\\
67	5.91591341116872e-13\\
68	1.78238499887275e-13\\
69	1.66080611807852e-13\\
70	1.34898903582728e-13\\
71	1.21878322965173e-13\\
72	4.62957103817883e-14\\
73	3.33924787186436e-14\\
74	2.01766434613452e-14\\
75	1.43097381529916e-14\\
76	9.55871334946234e-15\\
77	7.4868167397243e-15\\
78	3.96507319093518e-15\\
79	9.39123630628194e-16\\
80	7.77178706794305e-16\\
81	5.495778304805e-16\\
82	5.15586091482086e-16\\
83	3.15737143683964e-16\\
84	3.14897583902224e-16\\
85	2.53732301255341e-16\\
86	1.71840523916832e-16\\
87	9.75296641928983e-17\\
88	9.75296641928983e-17\\
89	9.75296641928983e-17\\
90	9.75296641928983e-17\\
91	9.75296641928983e-17\\
92	9.75296641928983e-17\\
93	9.75296641928983e-17\\
94	9.75296641928983e-17\\
95	9.75296641928983e-17\\
96	9.75296641928983e-17\\
97	9.75296641928983e-17\\
98	9.75296641928983e-17\\
99	9.75296641928983e-17\\
100	9.75296641928983e-17\\
101	9.75296641928983e-17\\
102	9.75296641928983e-17\\
103	9.75296641928983e-17\\
104	9.75296641928983e-17\\
105	9.75296641928983e-17\\
106	9.75296641928983e-17\\
107	9.75296641928983e-17\\
108	9.75296641928983e-17\\
109	9.75296641928983e-17\\
110	9.75296641928983e-17\\
111	9.75296641928983e-17\\
112	9.75296641928983e-17\\
113	9.75296641928983e-17\\
114	9.75296641928983e-17\\
115	9.75296641928983e-17\\
116	9.75296641928983e-17\\
117	9.75296641928983e-17\\
118	9.75296641928983e-17\\
119	9.75296641928983e-17\\
120	9.75296641928983e-17\\
121	9.75296641928983e-17\\
122	9.75296641928983e-17\\
123	9.75296641928983e-17\\
124	9.75296641928983e-17\\
125	9.75296641928983e-17\\
126	9.75296641928983e-17\\
127	9.1534817354477e-17\\
128	8.86951945052927e-17\\
129	4.45613230989647e-17\\
};
\addplot [color=green,dashed,forget plot,line width = 1.1pt]
  table[row sep=crcr]{0	1e-5\\
100	1e-5\\
};
\addplot [color=green,dashed,forget plot,line width = 1.1pt]
  table[row sep=crcr]{0	1e-15\\
100	1e-15\\
};
\addplot [color=green,dashed,forget plot,line width = 1.1pt]
  table[row sep=crcr]{20	1e0\\
20	1e-20\\
};
\addplot [color=green,dashed,forget plot,line width = 1.1pt]
  table[row sep=crcr]{79	1e0\\
79	1e-20\\
};
\end{axis}
\end{tikzpicture}%%

\caption{Decay of the normalized singular values based the truncated Gramians of the system for the F-N example, and the dotted lines show the normalized singular value for $\hn =  20$ and the order of the reduced system corresponding to the normalized singular value  $1e{-15}$.}
	\label{fig:Fitz_sigma}

 %\caption{The figure shows the decay of the singular values (left-side), and the response $v(t)$ and $w(t)$ at the left boundary (right-side).}
\end{figure}

Furthermore, we determine reduced-order systems of order $\hn =  20 $ by using balanced truncation and moment-matching methods. We observe that the reduced-order systems, obtained via the moment-matching methods with linear $\cH_2$-optimal interpolations, both one-sided and two-sided, fail to capture the dynamics and limit cycles. We made several attempts to adjust the order of the reduced systems; but, we were unable to determine a stable reduced-order system via these methods with linear $\cH_2$-optimal points which could replicate the dynamics. Contrary to these methods,  the balanced truncation replicates the dynamics of the system faithfully as  can be seen in \Cref{fig:Fitz_output}.   Note that the reduced-order model reported in \cite{morBenB12a} was obtained using higher-order moments in a trial-and-error fashion but cannot be reproduced by an automated algorithm. As the dynamics of the system produces limit cycles for each spatial variable $x$, we, therefore, plot the solutions $v$ and $w$ over the
spatial domain $x$, which is also captured by the reduced-order system very well.
%\definecolor{mycolor1}{rgb}{0.00000,1.00000,1.00000}%
\begin{figure}[!tb]
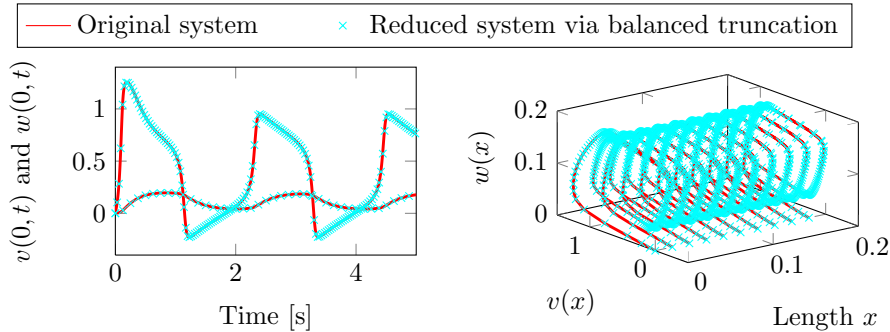

        \centering
        \begin{center}
         \begin{tikzpicture}
    \begin{customlegend}[legend columns=0, legend style={/tikz/every even column/.append style={column sep=1cm}} , legend entries={Original system,~ Reduced system via balanced truncation }, ]
    \addlegendimage{red}
    %\addlegendimage{black,solid,only marks,mark = *, mark size = 1.3}
    \addlegendimage{mycolor2,only marks, mark = x}
    \end{customlegend}
\end{tikzpicture}
\end{center}
\begin{subfigure}[t]{0.49\textwidth}
	\centering
	\setlength\fheight{2.5cm}
	\setlength\fwidth{4cm}
	\tikzsetnextfilename{Figures/Fitz_time_domain}%
	\input{Figures/Fitz_time_domain.tikz}%

	\caption{The response $v(t)$ and $w(t)$ at the left boundary.}
	\label{fig:Fitz_output}
\end{subfigure}
 \begin{subfigure}[t]{0.45\textwidth}
  \centering
  	\setlength\fheight{2.5cm}
	\setlength\fwidth{4.cm}
	\tikzsetnextfilename{Figures/Fitz_full_state}%
	\input{Figures/Fitz_full_state.tikz}%

{\caption{ Limit-cycles.}}
 \end{subfigure}
 \caption{FitzHugh-Nagumo system: comparison of the response at the left boundary and the limit cycle behavior of the original system and the reduced-order (balanced truncation) system. The reduced-order systems determined by moment-matching methods were unable to produce these limit cycles.}
 \label{fig:fitz_limit}
\end{figure}

\section{Conclusions}\label{sec:conclusions} In this paper, we have investigated balanced truncation model reduction for  QB control systems. We have proposed reachability and observability Gramians for QB systems based on the kernels of their underlying Volterra series.  Additionally, we have also introduced a truncated version of the Gramians. We, furthermore, have compared the controllability and observability energy functionals of the QB system with the quadratic forms of the proposed Gramians for the system and also investigated the connection between the Gramians and reachability/observability of the QB system. Also, we have discussed the advantages of the truncated version of Gramians in the MOR framework and  studied local Lyapunov stability of the reduced-order systems, obtained via the square-root balanced truncation. By means of various semi-discretized nonlinear PDEs, we have demonstrated the efficiency of the proposed balanced truncation methods for QB systems and compared it with the existing moment-matching techniques.
\bibliographystyle{siam}
\bibliography{mor}
\appendix
\section{A Convergence Result} 
\begin{lemma}\label{appendix_convergence}
	Consider a recurrence formula as follows:
	\begin{equation}\label{eq:sequence}
	x_{k+1} = F(x_k), \quad\forall \quad k\geq 1,
	\end{equation}
	where $F(x) = ax^2+bx+c$ and $a$, $b$, $c$ are real positive scaler numbers. Moreover, assume that $x_1 = c$.   Then, $\lim\nolimits_{k\rightarrow \infty}x_k =:x^*$ is finite if 
	\begin{subequations}
		\begin{align}
		b&<1, \quad \text{and} \label{cond1}\\
1>(b-1)^2-4ac &> 0.\label{cond2} 
\end{align}
	\end{subequations}
Furthermore,  $x^*$ is given by the smaller root of the the following quadratic equation:
$$ax^2 + (b-1)x +c = 0, \quad\text{i.e.,}$$
	\begin{equation}
	x^* = \dfrac{ -(b-1) - \sqrt{(b-1)^2-4ac}}{2a}.
	\end{equation}
	\end{lemma}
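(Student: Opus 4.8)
The plan is to show that the sequence $\{x_k\}$ is monotonically increasing and bounded above by the smaller root $x^{*}$, so that it converges, and then to identify the limit via continuity of $F$.

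First I would record the elementary structure of $F$. Since $a,b>0$, we have $F'(x)=2ax+b>0$ for all $x\ge 0$, so $F$ is strictly increasing on $[0,\infty)$. The fixed-point equation $F(x)=x$ is the quadratic $g(x):=ax^{2}+(b-1)x+c=0$, whose discriminant $(b-1)^{2}-4ac$ is positive by \eqref{cond2}, so it has two distinct real roots; their product $c/a$ and, using \eqref{cond1}, their sum $(1-b)/a$ are both positive, hence both roots are positive. Call them $x^{*}<x^{**}$, with $x^{*}=\big((1-b)-\sqrt{(b-1)^{2}-4ac}\,\big)/(2a)$. Since $g$ is an upward parabola, $g(x)>0$ on $[0,x^{*})$ and $g(x)<0$ on $(x^{*},x^{**})$; equivalently $F(x)>x$ on $[0,x^{*})$ and $F(x^{*})=x^{*}$.

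Next I would check that $F$ maps $[0,x^{*}]$ into itself and that the initial value lies in it. Monotonicity of $F$ together with $F(x^{*})=x^{*}$ gives $F([0,x^{*}])=[F(0),x^{*}]=[c,x^{*}]$, so it suffices to verify $c<x^{*}$. This is the one genuinely computational point: $c<x^{*}$ is equivalent, after clearing the denominator, to $\sqrt{(b-1)^{2}-4ac}<(1-b)-2ac$; the right-hand side is positive because $4ac<(1-b)^{2}$ together with $0<b<1$ forces $2ac<1-b$, and squaring reduces the inequality to $-4abc<4a^{2}c^{2}$, which is trivial since $a,b,c>0$. Hence $x_{1}=c\in(0,x^{*})$. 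The induction then runs cleanly: if $x_{k}\in[0,x^{*})$ then $x_{k+1}=F(x_{k})>x_{k}$ (because $g(x_{k})>0$) and $x_{k+1}=F(x_{k})<F(x^{*})=x^{*}$ (strict monotonicity), so the whole sequence stays in $[0,x^{*})$ and is strictly increasing. Being monotone and bounded above, it converges to some $L\le x^{*}$; continuity of $F$ gives $L=F(L)$, so $L$ is a fixed point, and since the only fixed points are $x^{*}$ and $x^{**}>x^{*}$, we conclude $L=x^{*}$, which is exactly the claimed formula. As an aside, one may replace the final step by noting that $F$ is a contraction on $[0,x^{*}]$ with Lipschitz constant $\max_{[0,x^{*}]}F'=2ax^{*}+b=1-\sqrt{(b-1)^{2}-4ac}\in(0,1)$ by \eqref{cond2}, which also quantifies the convergence rate and explains the role of the upper bound in \eqref{cond2}.

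I would flag the verification $c<x^{*}$ — i.e.\ that the starting point already lies below the attracting fixed point — as the only place where the hypotheses \eqref{cond1}--\eqref{cond2} are really used, and hence as the main thing to get right; the remainder is soft, resting only on monotone convergence together with continuity of $F$.
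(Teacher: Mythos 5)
Your argument is correct, but it runs on a different mechanism than the paper's. The paper locates the two fixed points, argues via $b<1$ that both are positive, bounds $F'(y)\le F'\bigl(x^{(1)}\bigr)=1-\sqrt{(b-1)^2-4ac}<1$ for $y\in[c,x^{(1)}]$, and invokes the Banach fixed-point theorem; the upper bound $(b-1)^2-4ac<1$ enters there only to keep the contraction constant inside $(0,1)$. You instead run a monotone-convergence argument: $F(x)>x$ strictly below the smaller fixed point, $F$ is increasing and maps $[0,x^*]$ into itself, so the iterates increase to a limit that must be a fixed point not exceeding $x^*$, hence equal to $x^*$. Your route buys two things. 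First, it is more self-contained: the paper never verifies that $F$ maps $[c,x^{(1)}]$ into itself, nor that $c\le x^{(1)}$, both of which are prerequisites for applying the contraction principle on that interval; your computation showing $c<x^*$ (which, after squaring, reduces to the trivial inequality $-4abc<4a^2c^2$) closes exactly that gap, and your interval-invariance step supplies the missing self-mapping property. Second, it makes visible that the upper bound in the second hypothesis is not needed for convergence at all — only $b<1$ and a nonnegative discriminant are — and matters only for the geometric rate, which you correctly relegate to the contraction aside; that aside essentially recovers the paper's proof as a corollary of your setup.
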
 
	\begin{proof}
		First, note that the sequence \eqref{eq:sequence} contains only  real positive numbers. Thus, the equilibrium point  must also be  a real positive number. Furthermore, the equilibrium points solve the quadratic equation $F(x)-x = 0$, and we denote these equilibrium points by $x^{(1)}$ and $x^{(2)}$ with $x^{(1)}\leq x^{(2)}$.  Since $a$, $b$ and $c$ all are positive, both equilibrium points either can be positive or negative depending on the value of $b$.  To ensure the equilibrium points being positive, the minima of $F(x)-x$ must lie in the right half plane; thus, $b-1<0$, leading to the condition \eqref{cond1}. 
		
		Furthermore, we consider the derivative of $F(x)$, that is, $F'(x) := 2ax+b$.   Since $F'(x)$ is an increasing function and $F'(x) \geq 0$ $ \forall x \in [c,x^{(1)}]$, we have for $y \in [c,x^{(1)}]$:
		\begin{align*}
		 F'(y) &\leq  F'(x^{(1)})\\
		 &\leq 2ax^{(1)}+b = 2a\left(\dfrac{-(b-1) - \sqrt{(b-1)^2-4ac}}{2a} \right)  + b 
		  \leq 1 - \sqrt{(b-1)^2-4ac} .
		\end{align*}
	  Assuming  $1>(b-1)^2-4ac >0$, we have  $F'(y)  <1$, $\forall y\in [c,x^{(1)}]$.  Thus, by Banach fix-point theorem,  $F(x)$ is a contraction on $[c,x^{(1)}]$, and the fixed point is given by $x^{(1)}$.
\end{proof}

\end{document}

% --- supplement: ArXiv Preprint2/ex_supplement.tex ---

\maketitle

\section{A detailed example}

Here we include some equations and theorem-like environments to show
how these are labeled in a supplement and can be referenced from the
main text.
Consider the following equation:
\begin{equation}
  \label{eq:suppa}
  a^2 + b^2 = c^2.
\end{equation}
You can also reference equations such as \cref{eq:matrices,eq:bb} 
from the main article in this supplement.

\lipsum[100-101]

\begin{theorem}
  An example theorem.
\end{theorem}

\lipsum[102]
 
\begin{lemma}
  An example lemma.
\end{lemma}

\lipsum[103-105]

Here is an example citation: \cite{KoMa14}.

\section[Proof of Thm]{Proof of \cref{thm:bigthm}}
\label{sec:proof}

\lipsum[106-114]

\section{Additional experimental results}
\Cref{tab:foo} shows additional
supporting evidence. 

\begin{table}[htbp]
  \caption{Example table}
  \label{tab:foo}
  \centering
  \begin{tabular}{|c|c|c|} \hline
   Species & \bf Mean & \bf Std.~Dev. \\ \hline
    1 & 3.4 & 1.2 \\
    2 & 5.4 & 0.6 \\ \hline
  \end{tabular}
\end{table}

\bibliographystyle{siamplain}
\bibliography{references}

% --- supplement: ex_supplement.tex ---

\maketitle

\section{A detailed example}

Here we include some equations and theorem-like environments to show
how these are labeled in a supplement and can be referenced from the
main text.
Consider the following equation:
\begin{equation}
  \label{eq:suppa}
  a^2 + b^2 = c^2.
\end{equation}
You can also reference equations such as \cref{eq:matrices,eq:bb} 
from the main article in this supplement.

\lipsum[100-101]

\begin{theorem}
  An example theorem.
\end{theorem}

\lipsum[102]
 
\begin{lemma}
  An example lemma.
\end{lemma}

\lipsum[103-105]

Here is an example citation: \cite{KoMa14}.

\section[Proof of Thm]{Proof of \cref{thm:bigthm}}
\label{sec:proof}

\lipsum[106-114]

\section{Additional experimental results}
\Cref{tab:foo} shows additional
supporting evidence. 

\begin{table}[htbp]
  \caption{Example table}
  \label{tab:foo}
  \centering
  \begin{tabular}{|c|c|c|} \hline
   Species & \bf Mean & \bf Std.~Dev. \\ \hline
    1 & 3.4 & 1.2 \\
    2 & 5.4 & 0.6 \\ \hline
  \end{tabular}
\end{table}

\bibliographystyle{siamplain}
\bibliography{references}